\newcommand{\R}{\mathbb{R}}
\newcommand{\Z}{\mathbb{Z}}
\newcommand{\seq}[1]{\left\{#1\right\}}
\newcommand{\Dx}{{\Delta x}}
\newcommand{\Dt}{{\Delta t}}
\newcommand{\norm}[1]{\left\|#1\right\|}
\newcommand{\abs}[1]{\left|#1\right|}
\newcommand{\Dp}{D_{+}}
\newcommand{\Dmx}{D_-}
\newcommand{\Dpt}{D_+^t}
\newcommand{\Dmt}{D_-^t}
\newcommand{\weakto}{\rightharpoonup}
\newcommand{\weakstarto}{\overset{\star}{\weakto}}
\newcommand{\eps}{\varepsilon}
\newcommand{\CL}{\mathcal{L}}
\newcommand{\N}{\mathbb{N}}
\newcommand{\CMloc}{\mathcal{M}_{\mathrm{loc}}}
\newcommand{\Dm}{D_-}
\newcommand{\jint}{\int_{x_{j-1/2}}^{x_{j+1/2}}\!\!}
\newcommand{\tint}{\int_{t_n}^{t_{n+1}}\!\!}
\newcommand{\loc}{{\mathrm{loc}}}
\renewcommand{\u}[1]{u^{(#1)}}
\newcommand{\Div}{\mathrm{div}}
\newcommand{\Curl}{\mathrm{curl}}
\newcommand{\st}{\;\bigm|\;}
\newtheorem{definition}{Definition}[section]
\newtheorem{theorem}{Theorem}[section]
\newtheorem{lemma}{Lemma}[section]
\newtheorem{remark}{Remark}[section]
\theoremstyle{definition} 
\newtheorem*{maintheorem*}{Main Theorem}
\numberwithin{equation}{section}
\numberwithin{figure}{section}
\numberwithin{table}{section}
\newcounter{asnr}
\newenvironment{Assumptions} %
{\ifnum\value{asnr}=0 \stepcounter{asnr} 
  \begin{enumerate}[label=\textbf{A}.\arabic{enumi}]
    \else
    \begin{enumerate}[label=\textbf{A}.\arabic{enumi},resume] \fi}
{\end{enumerate}}
\newcounter{defnr}
\newenvironment{Definitions} %
{\ifnum\value{defnr}=0 \stepcounter{defnr} 
  \begin{enumerate}[label=\textbf{D}.\arabic{enumi}]
    \else
    \begin{enumerate}[label=\textbf{D}.\arabic{enumi},resume] \fi}
{\end{enumerate}}
\title[Keyfitz-Kranzer]{Finite difference schemes for 
 the symmetric Keyfitz-Kranzer system} 
\author[U. Koley]{U. Koley} \address[Ujjwal Koley] {\newline  
   Institut f\"{u}r Mathematik,  \newline
   Julius-Maximilians-Universit\"{a}t W\"{u}rzburg,   
\newline Campus Hubland Nord, Emil-Fischer-Strasse 30, \newline 97074,
W\"{u}rzburg, Germany.} 
\email[]{toujjwal@gmail.com}
\author[N. H. Risebro]{N. H. Risebro} \address[Nils Henrik
Risebro]{\newline Centre of Mathematics for Applications (CMA) 
  \newline University of Oslo\newline P.O. Box 1053, Blindern\newline
 N--0316 Oslo, Norway} \email[]{nilshr@math.uio.no}
\thanks{This paper was written when NHR  was a quest of the
  Seminar f\"{u}r Angewandte Mathematik, ETH, Zürich. This institution
  is thanked for its hospitality.  UK was supported in part by a
  Humboldt Research Fellowship through  the Alexander von Humboldt
  Foundation.
  The anonymous referee is warmly thanked for the thorough reading of
  the manuscript, and for the many useful remarks and suggestions.}
\keywords{Keyfitz-Kranzer system, finite difference scheme, existence}
\date{\today}
\begin{document}

\begin{abstract}
  We are concerned with the convergence of numerical schemes for the
  initial value problem associated to the Keyfitz-Kranzer system of
  equations. This system is a toy model for several important models
  such as in elasticity theory, magnetohydrodynamics, and enhanced oil
  recovery. In this paper we prove the convergence of three difference
  schemes. Two of these schemes is shown to converge to the unique
  entropy solution.  Finally, the convergence is illustratred by
  several examples.
\end{abstract}

\maketitle

\tableofcontents

\section {Introduction}
In this paper, we consider difference methods for the Cauchy problem
for the $n \times n$ symmetric system of Keyfitz-Kranzer type
\begin{equation}
  \label{eq:system}
  \begin{cases}
    u_t + \left(u\phi(\abs{u})\right)_x = 0, &\ \ x \in \Omega = \R \times (0,T),\\
    u(x,0)=u_0(x), &\ \ x \in \R,
  \end{cases}
\end{equation}
where $T>0$ is fixed, $u=\left(\u{1},\ldots,\u{n}\right): \R \times
[0,T) \rightarrow \R^n$ is the unknown vector map with
$\abs{u}=\sqrt{{\u{1}}^2+\cdots+{\u{n}}^2}$,
$u_0=\left(u_0^{(1)},\ldots,u_0^{(n)}\right)$ the initial data, and $
\phi: \R \rightarrow \R$ is given (sufficiently smooth) scalar
function (see Section ~\ref{sec:math} for the complete list of
assumptions). Systems of this type was first considered in
\cite{KeyfitzKranzer, LiuWang} and later on by several other authors
\cite{DeLelis}, as a prototypical example of a non-strictly hyperbolic
system. This type of system is a model system for some phenomena in
magnetohydrodynamics, elasticity theory and enhanced
oil-recovery. This system also has similarities to a model of
chromatography \cite{Kroma} and to a model describing polymer flooding
in porous media \cite{TveitoWinter}. Note that since $\phi$ is a
function of $\abs{u}$, we call \eqref{eq:system} a symmetric
Keyfitz-Kranzer system. A non-symmetric version of the Keyfitz-Kranzer
system reads
\begin{equation}
  \label{eq:system_non}
  \begin{cases}
    u_t + \left(u\,\phi(u, w_1, w_2, \ldots, w_n)\right)_x = 0, &\\
    (u\,w_i)_t + \left(u\, w_i\,\phi(u, w_1, w_2, \ldots,
      w_n)\right)_x = 0, &\ \ i = 1,2, \ldots, n.
  \end{cases}
\end{equation}
Existence of global bounded weak solutions to \eqref{eq:system_non}
has been studied by Lu \cite{Lu2} for a specific choice of $\phi$.

For the flux function $F(u)= u \phi(\abs{u})$, a straightforward
calculation shows $B(u)= dF(u)$ is the matrix with entries
\begin{align*}
  B_{i,j} (u) = \phi(\abs{u}) \delta_{i,j} + \phi'(\abs{u}) \frac{u_i
    u_j}{\abs{u}}, \quad i,j = 1,2, \cdots, n,
\end{align*}
where $\delta_{i,j}$ is the Kronecker delta, given by
\begin{align*}
  \delta_{i,j} =
  \begin{cases}
    1, \ \ & i=j \\
    0, \ \ & i \neq j.
  \end{cases}
\end{align*}
The matrix $B(u)$ is symmetric, therefore the system \eqref{eq:system}
is hyperbolic, that is, all the eigenvalues of $B(u)$ are real and the
corresponding collection of eigenvectors is complete. It is easy to
see that the first eigenvalue of $B(u)$ is $\lambda_1 = \phi(\abs{u})
+ \phi'(\abs{u}) \abs{u}$ and other $n-1$ eigenvalues are $\lambda_i =
\phi(\abs{u})$, $i=1,2,\cdots, n-1$. The presence of repeated
eigenvalues shows that the system \eqref{eq:system} is not strictly
hyperbolic.

Due to the nonlinearity, discontinuities in the solution may appear
independently of the smoothness of the initial data and weak solution
must be sought.  A weak solution is defined as follows:
\begin{definition}
  We say $u(x,t)$ a weak solution to \eqref{eq:system} if
  \begin{Definitions}
  \item $u(x,t) \in L^{\infty}(\R \times \R_{+})$.
  \item For all test functions $\psi\in C^\infty_0(\R\times
    [0,\infty))$
    \begin{equation}
      \label{eq:weaksys_1}
      \iint_{\R\times\R^+} \!\! u\psi_t + u\phi(\abs{u})\psi_x \,dxdt +
      \int_\R u_0 \psi(x,0)\,dx = 0,
    \end{equation}
  \end{Definitions}
\end{definition}
It is well known that weak solutions may be discontinuous and they are
not uniquely determined by their initial data. Consequently, an
entropy condition must be imposed to single out the physically correct
solution. Therefore the Cauchy problem is viewed in the framework of
entropy solutions. For \eqref{eq:system}, an entropy formulation was
first introduced by Freist\"{u}hler \cite{Freistuler1,Freistuhler},
and independently, by Panov \cite{Panov2}. An entropy solution to
\eqref{eq:system} is defined as follows:
\begin{definition}
  \label{def:entropy}
  A bounded measurable function $u(x,t)$ is called an entropy solution
  to \eqref{eq:system} if
  \begin{Definitions}
  \item For all test functions $\psi\in C^\infty_0(\R\times
    [0,\infty))$
    \begin{equation}
      \label{eq:weaksys}
      \iint_{\R\times\R^+} \!\! u\psi_t + u\phi(\abs{u})\psi_x \,dxdt +
      \int_\R u_0 \psi(x,0)\,dx = 0,
    \end{equation}
  \item $r=\abs{u}$ is an entropy solution (in the sense of
    Kru\v{z}kov \cite{kruzkov}) of the scalar conservation law
    \begin{equation}\label{eq:scalar}
      \begin{cases}
        r_t + \left(r\phi(r)\right)_x = 0, \ \ &t>0,\\
        r(x,0)=\abs{u_0(x)}.
      \end{cases}
    \end{equation}
  \end{Definitions}
\end{definition}
Regarding the existence, uniqueness of solutions and continuous
dependence of solutions on the initial data we have the following
result
\begin{theorem}
  \label{thm:FreistuhlerPanov}
  The system \eqref{eq:system} has the following properties:
  \begin{itemize}
  \item [(E)] The system has a solution for any $u_0\in L^\infty(\R)$.
  \item [(U)] For such $u_0$, there is precisely one solution $u$ with
    the property that $r=\abs{u}$ satisfies the scalar conservation
    laws \eqref{eq:scalar} and Kru\v{z}kov's entropy criterion.
  \item [(S)] This solution $u$ depends $L^1_\loc(\R)$ continuously on
    the initial data $u_0$.
  \end{itemize}
\end{theorem}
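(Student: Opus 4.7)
The strategy I would follow---essentially that of Freist\"{u}hler and Panov---rests on the structural observation that, formally, $r = \abs{u}$ satisfies the autonomous scalar conservation law \eqref{eq:scalar}. This decouples \eqref{eq:system} into (i)~a Kru\v{z}kov scalar problem for $r$, and (ii)~a linear continuity equation $u_t + (\phi(r)\,u)_x = 0$ whose coefficient is determined by step~(i). The hope is that (i) is solved by standard scalar theory, that (ii) is well-posed in the renormalized/Lagrangian sense, and that any solution of (ii) automatically satisfies $\abs{u} = r$.

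For existence~(E), I would first invoke Kru\v{z}kov's theorem to produce the unique bounded entropy solution $r$ of \eqref{eq:scalar} with datum $\abs{u_0}$. I would then construct $u$ as a solution of the linear continuity equation $u_t + (\phi(r)u)_x = 0$ component by component, for instance by passing to the limit in vanishing-viscosity or front-tracking approximations of \eqref{eq:system} itself. The key consistency step is to verify that $\abs{u}$, constructed this way, again satisfies \eqref{eq:scalar} in the Kru\v{z}kov sense; by uniqueness of (i) one then has $\abs{u} = r$ almost everywhere, and $u$ is an entropy solution in the sense of Definition~\ref{def:entropy}.

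For uniqueness~(U), if $u$ and $\tilde u$ are two entropy solutions with the same initial datum, Definition~\ref{def:entropy}(D.2) together with Kru\v{z}kov's $L^1$-contraction immediately forces $\abs{u} = \abs{\tilde u} =: r$, so both $u$ and $\tilde u$ solve the \emph{same} linear continuity equation $w_t + (\phi(r)w)_x = 0$ with common datum $u_0$. Uniqueness then reduces to uniqueness for that linear equation, which can be extracted from the DiPerna--Lions--Ambrosio renormalization theory. Stability~(S) follows by combining Kru\v{z}kov's $L^1$-contraction for the scalar step with the continuous dependence of renormalized solutions of the continuity equation on the coefficient.

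The main obstacle is handling the linear continuity equation at the level of regularity actually available: with $u_0$ only in $L^\infty$, $r$ need not be $BV$, and Ambrosio's theory does not apply off the shelf. One must exploit the near-incompressibility furnished by the system itself (equivalently, the fact that the angular directions $u/\abs{u}$ are merely transported by the velocity $\phi(r)$), and carefully justify the renormalization identity $\abs{u}_t + (\abs{u}\phi(r))_x = 0$ that closes the loop with (i). This is the delicate content of the Freist\"{u}hler--Panov analysis; once it is in place, the remaining ingredients are standard scalar conservation-law machinery.
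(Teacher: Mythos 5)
The paper does not actually prove Theorem~\ref{thm:FreistuhlerPanov}: it is quoted from the literature, with the proof attributed to Freist\"{u}hler \cite{Freistuhler} (via Wagner's equivalence theorem \cite{Wagner}) and, independently, to Panov \cite{Panov2} (via vanishing viscosity and measure-valued solutions). Your decomposition into a scalar Kru\v{z}kov problem for $r$ plus a linear continuity equation for $u$ with coefficient $\phi(r)$ is the correct skeleton and matches the starting point of both cited arguments. However, your proposal stalls exactly at the step that constitutes the actual content of the theorem: well-posedness of $u_t+(\phi(r)u)_x=0$ and the propagation of $\abs{u}=r$ when the coefficient $r$ is merely a bounded entropy solution. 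The DiPerna--Lions--Ambrosio renormalization theory you invoke requires Sobolev or $BV$ regularity of the velocity field, which is not available for $u_0\in L^\infty$, and you concede as much; deferring this to ``the delicate content of the Freist\"{u}hler--Panov analysis'' leaves the central lemma unproved, so the argument as written is a plan rather than a proof.

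The concrete mechanism that closes the loop in the literature --- and which this paper itself relies on later, see Lemma~\ref{lem:wagnar} --- is Wagner's change to Lagrangian coordinates $y$ defined by $y_x=r$, $y_t=-r\phi(r)$. Under this transformation the angular equation \eqref{eq:veq2} becomes $\tilde v_t=0$ (the second equation of \eqref{eq:newsys}), so $\abs{\tilde v}=1$ is propagated trivially, and uniqueness of the angular part given $r$ is immediate; transforming back and combining with Kru\v{z}kov uniqueness and $L^1$-contraction for the scalar step yields (E), (U) and (S). Panov's alternative avoids the transformation by showing that the Young measure generated by the viscous approximations reduces to a Dirac mass. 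Either route supplies the missing ingredient; your write-up correctly identifies where the difficulty lies but does not overcome it, and the tool it nominates is known to be insufficient at this level of regularity.
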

This theorem was first proved in \cite{Freistuhler} by using the
famous equivalence result of Wagner \cite{Wagner}. The key idea behind
this proof is to view the system \eqref{eq:system} as an extended
system, consisting of \eqref{eq:system} and an additional conservation
law satisfied by $r$ \eqref{eq:scalar}, with Wagner's transformation
theory. On the other hand, in \cite{Panov2}, Panov gave a ``direct''
proof of both existence and uniqueness. The existence was proved by
showing the convergence of the singularly perturbed problems
\begin{equation*}
  u^\eps_t + \left(u^\eps \phi(\abs{u^\eps})\right)_x = \eps u^\eps_{xx},
\end{equation*}
to an entropy solution as $\eps\to 0$. The idea behind the existence
proof was first to show the existence of a \emph {measure-valued}
solution $\nu_{(t,x)}$ of the Cauchy problem \eqref{eq:system}. Then
he showed that indeed $\nu_{(t,x)}$ is regular: $\nu_{(t,x)}(u)=
\delta(u -u(t,x)), u(t,x) \in L^{\infty}(\R \times \R^{+}, \R^n)$ and
consequently this gives existence of a solution to \eqref{eq:system}.

In view of the analytic properties of the solutions of
\eqref{eq:system}, several different methods for computing the
solution suggest themselves. Foremost among these methods is Glimm's
scheme \cite{Glimm}. Regarding other numerical methods, it is tempting
to use the equation satisfied by $r$, and view $r$ as an independent
variable. Defining $v\in S^{n-1}$ by $vr = u$, we formally have that
\begin{align}
  r_t + (r\phi(r))_x &= 0\label{eq:req}\\
  (rv)_t + (r\phi(r) v)_x &=0\label{eq:veq1} \intertext{or} v_t +
  \phi(r) v_x &=0.\label{eq:veq2}
\end{align}
As a strategy, one can then solve \eqref{eq:req} first, and then
either \eqref{eq:veq1} or \eqref{eq:veq2}. These should then hold
subject to the constraint $\abs{v}=1$. Without this constraint,
\eqref{eq:req}--\eqref{eq:veq1} is a ``triangular'' system of
conservation laws, see \cite{Triang_Cocliteetal}. Using any monotone
scheme for \eqref{eq:req} and \eqref{eq:veq1} will ensure the strong
convergence of the approximate solutions to \eqref{eq:req} and the
weak-star convergence of the approximate solutions to
\eqref{eq:veq1}. This approach was used in
\cite{FreistuhlerPittman}. To show that $u=rv$ is an entropy solution
to \eqref{eq:system}, one must show (for the approximations) that
$\abs{v}=1$ in the limit if $\abs{v_0}=1$.  In this paper, for the
semi-discrete scheme, we discretize \eqref{eq:system} in space and
show the convergence of approximate solution to a weak solution of
\eqref{eq:system}. But we are unable to extend our analysis to the
fully discrete scheme based on discretizing \eqref{eq:system}.  To
overcome this difficulty, we propose another scheme based on
discretizing \eqref{eq:req}--\eqref{eq:veq2} and prove the convergence
of approximate solution to unique entropy solution of
\eqref{eq:system}.

The present paper can be divided into four parts:
\begin{enumerate}
\item In Section ~\ref{sec:math}, we present the mathematical framework
  used in this paper. In particular, we used a compensated compactness
  result in the spirit of Tartar \cite{Tartar} but the proof is based
  on div-curl lemma and does not rely on the Young measure.

\item In section ~\ref{sec:semi}, we propose an upwind semi-discrete
  finite difference scheme and prove the convergence of the
  approximate solution to the weak solution of \eqref{eq:system}. The
  idea behind this proof is to prove first the strong convergence of
  approximate solution $r_{\Dx} = \abs{u_{\Dx}}$ using the compensated
  compactness technique \cite{Tartar,Chen}. Then prove a $B.V.$ estimate
  for $\tau=u/(u,e)$, where $e$ is a unit vector in $\R^n$.
  Then Kolmogorov's theorem, combined with
  the strong convergence of $r_{\Dx}$, gives the strong convergence of
  approximate solution $u_{\Dx}$.

\item In section ~\ref{sec:fully}, for a fully discrete scheme, we are
  only able to conclude that $u$ is only a distributional solution of
  \begin{align*}
    u_t + (u \, \phi(r))_x =0,
  \end{align*}
  for some $r$ such that $\abs{u} \le r$.  We propose another fully
  discrete scheme relying on explicit decoupling of the variables $r$
  and $v$ expressed by the ``nonconservative'' formulation
  \eqref{eq:req}--\eqref{eq:veq2}
  \begin{equation*}
    \begin{cases}
      r_t + (r\phi(r))_x &= 0, \\
      v_t + \phi(r) \, v_x &=0,
    \end{cases}
  \end{equation*}
  with $r(0) = \abs{u(0)}$.  It is not difficult to show the
  convergence of $r_{\Dx}$ to $r$, $r$ being the unique entropy
  solution of \eqref{eq:req}, and the strong convergence of $v_{\Dx}$.
  In order to conclude that $u= r\,v$ is the unique entropy solution
  of \eqref{eq:system}, one has to show $\abs{v(x,t)} =1$ and this has
  been achieved in this paper using Wagner transformation
  \cite{Wagner} (see Section ~\ref{sec:math} for more details).

\item Finally, in Section ~\ref{sec:numerical}, we test our numerical
  schemes and provide some numerical results.

\end{enumerate}

\section{Mathematical Framework}
\label{sec:math}
In this section we present some mathematical tools that we shall use
in the analysis. To start with the basic assumptions on the initial
data and the funtion $\phi(r)$, we assume that $\phi$ is a twice
differentiable function $\phi:[0,\infty)\to [0,\infty)$ so that
\begin{Assumptions}
\item $\phi(0)=0$, $\phi(r)>0$ and $ \phi'(r)\ge 0$ for all relevant
  $r$;\label{def:w1}
\item $\phi(r), \phi'(r)$ and $\phi''(r)$ are bounded for all relevant
  $r$;\label{def:w2}
\item $ \mathrm{meas}\seq{ r \,\Bigm|\, 2\phi'(r) + r\phi''(r) = 0} =
  0$;\label{def:w3}
\item $ \abs{u_0} \in L^1(\R)\cap L^{\infty}(\R)$ and $ \abs{u_0} \in \mathcal{B}(K)$ for any constant $K$ in $\R$, where
\begin{align*}
\mathcal{B}(K) & := \lbrace  f \st \inf_{x \in \mathcal{A}(f,K)} f \ge C_K \rbrace,\, \text{and} 
\end{align*}
\begin{align*}
\mathcal{A}(f, K) &:= 
& \seq{
  \begin{gathered}
    x \in (-\infty, K] \st \exists \,\eps >0\,\\
    \text{with}\, f(y) > \liminf_{z \rightarrow x} f(z), \, \text{for
      a.e} \,\, y \in (x-\eps, x)
  \end{gathered}
}.
\end{align*}
Here $C_K$ is a positive constant depending on $K$;\label{def:w4}
\item $u_0 \in \Gamma_\delta$, where $\Gamma_\delta$ is the cone
  \begin{equation*}
    \Gamma_\delta :=\seq{ u\in \R^n \st \delta \abs{u}\le (e,u) }
  \end{equation*}
  for some fixed unit vector (which we without loss of generality
  choose as $e=(1,\ldots,1)/\sqrt{n}$), and $\delta$ is a fixed number
  in the interval $(\sqrt{(n-1)/n},1)$.\label{def:w5}
\end{Assumptions}
Next, we recapitulate the results we shall use from the compensated
compactness method due to Murat and Tartar \cite{Murat,Tartar}. For a
nice overview of applications of the compensated compactness method to
hyperbolic conservation laws, we refer to Chen \cite{Chen}.  Let
$\mathcal{M}(\R)$ denote the space of bounded Radon measures on $\R$
and
\begin{align*}
  C_0(\R) = \seq{ \psi \in C(\R) \st \lim_{\abs{x} \rightarrow \infty}
  \psi(x) =0 }.
\end{align*}
If $\mu \in \mathcal{M}(\R)$, then
\begin{align*}
  \langle \mu, \psi \rangle = \int_{\R} \psi \,d \mu, \quad \text{for
    all} \quad \psi \in C_0(\R).
\end{align*}
Recall that $\mu \in \mathcal{M}(\R)$ if and only if $ \abs{\langle
  \mu, \psi \rangle} \le C \norm{\psi}_{L^{\infty}(\R)} $ for all
$\psi \in C_0(\R)$. We define
\begin{align*}
  \norm{\mu}_{\mathcal{M}(\R)} = \sup{\lbrace \abs{\langle \mu, \psi
      \rangle}: \psi \in C_0(\R), \norm{\psi}_{L^{\infty}(\R)} \le 1
    \rbrace}.
\end{align*}
The space $\left( \mathcal{M}(\R), \norm{\cdot}_{\mathcal{M}(\R)}
\right)$ is a Banach space and it is isometrically isomorphic to the
dual space of $\left(C_0(\R), \norm{\cdot}_{L^{\infty}(\R)} \right)$,
while we define the space of probablity measures
\begin{align*}
  \text{Prob}(\R) = \lbrace \mu \in \mathcal{M}(\R): \mu \, \text{is
    nonnegative and} \, \norm{\mu}_{\mathcal{M}(\R)}=1 \rbrace.
\end{align*}
% Then we can state the fundamental theorem in the theory of
% compensated compactness.
% \begin{theorem}
%   \label{thm:young}
%   Let $K \subset \R$ be a bounded open set and $u^{\eps}: \R \times
%   [0,T] \rightarrow K$. Then there exists a family of probablity
%   measures ${\lbrace \nu_{(x,t)}(\lambda) \in \text{Prob}(\R)
%     \rbrace}_{(x,t)\in \R \times [0,T]}$ (depending weak-star
%   measurably on $(x,t)$) such that
%   \begin{align*}
%     \supp \, \nu_{(x,t)} \subset \overline{K} \,\, \text{for a.e.}
%     \, (x,t) \in \R \times [0,T].
%   \end{align*}
%   Furthermore, for any continuous function $\Phi: K \rightarrow \R$,
%   we have along a subsequence
%   \begin{align*}
%     \Phi(u^{\eps}) \weakstarto \overline{\Phi} \,\, \text{in} \,\,
%     L^{\infty}(\R \times [0,T]) \, \text{as} \,\, \eps \downarrow 0,
%   \end{align*}
%   where (the exceptional set depends possibly on $\Phi$)
%   \begin{align*}
%     \overline{\Phi}(x,t) := \langle \nu_{(x,t)}, \Phi \rangle =
%     \int_{\R} \Phi(\lambda) d\nu_{(x,t)}(\lambda) \,\, \text{for
%       a.e.} \, \, (x,t) \in \R \times [0,T].
%   \end{align*}
% \end{theorem}
% In the literature, $\nu_{(x,t)}$ is often referred to as a Young
% measure. Theorem~\ref{thm:young} provides us with a representation
% formula for weak limits of nonlinear functions and Young measures. A
% uniformly bounded sequence ${\lbrace u^{\eps} \rbrace}_{\eps>0}$
% converges to $u$ a.e. on $\R \times [0,T]$ if and only if the
% corresponding Young measure $\nu_{(x,t)}$ reduces to a Dirac measure
% located at $u(x,t)$, i.e., $\nu_{(x,t)}= \delta_{u(x,t)}$.

Before we state the compensated compactness theorem, we recall the
celebrated div-curl lemma.
\begin{lemma}[div-curl lemma]
  \label{lem:div}
  Let $\Omega$ be a bounded open subset of $\R^2$. With $\eps >0$
  denoting a parameter taking its value in a sequence which tends to
  zero, suppose
  \begin{align*}
    & D^{\eps} \weakto D \, \, \text{in} \,\, (L^2(\Omega))^2, \qquad E^{\eps} \weakto E \,\, \text{in} \, \, (L^2(\Omega))^2, \\
    &{\lbrace \Div \,D^{\eps} \rbrace}_{\eps>0} \, \, \text{lies in a compact subset of} \, \, H^{-1}_{\mathrm{loc}}(\Omega), \\
    &{\lbrace \Curl \,E^{\eps} \rbrace}_{\eps>0} \, \, \text{lies in a
      compact subset of} \, \, H^{-1}_{\mathrm{loc}}(\Omega).
  \end{align*}
  Then along a subsequence
  \begin{align*}
    D^{\eps} \cdot E^{\eps} \rightarrow D \cdot E \, \, \text{in} \,\,
    \mathcal{D}'(\Omega).
  \end{align*}
\end{lemma}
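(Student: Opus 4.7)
The plan is to follow the classical Murat--Tartar argument, combining a two-dimensional Hodge decomposition with Rellich compactness so as to turn one of the factors in $D^\eps \cdot E^\eps$ into a weakly convergent piece and a strongly convergent piece. Although $L^2$-weak convergence alone does not survive taking products, the $H^{-1}$-precompactness of $\Div D^\eps$ and $\Curl E^\eps$ supplies exactly the missing strong compactness.

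First I would localize. Fix $\varphi \in C_0^\infty(\Omega)$, choose a smooth bounded domain $U$ with $\supp \varphi \subset U$ and $\overline{U} \subset \Omega$, and a cutoff $\chi \in C_0^\infty(U)$ equal to one on a neighborhood of $\supp \varphi$. Replace $E^\eps$ by $\chi E^\eps$: the quantity $\int \varphi \, D^\eps \cdot E^\eps \, dx$ is unchanged, the new field is supported in $U$, and its curl
\[
\Curl(\chi E^\eps) = \chi \, \Curl E^\eps + \nabla^\perp \chi \cdot E^\eps
\]
is still precompact in $H^{-1}(U)$ (the first summand by hypothesis, the second because it is bounded in $L^2(U)$ and the embedding $L^2(U) \subset H^{-1}(U)$ is compact by Rellich). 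Subtracting the weak limits, which themselves inherit the divergence/curl structure, I may further assume $D = 0$ and $E = 0$.

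Next, on a ball $B$ containing $\overline{U}$, I would perform the Hodge decomposition
\[
E^\eps = \nabla \alpha^\eps + \nabla^\perp \beta^\eps, \qquad \nabla^\perp = (-\partial_{x_2}, \partial_{x_1}),
\]
with $\beta^\eps \in H^1_0(B)$ solving $-\Delta \beta^\eps = \Curl E^\eps$. Since $-\Delta : H^1_0(B) \to H^{-1}(B)$ is an isomorphism and $\Curl E^\eps$ is precompact in $H^{-1}(B)$, the function $\beta^\eps$ is precompact in $H^1_0(B)$; along a subsequence $\nabla^\perp \beta^\eps \to g$ strongly in $L^2(B)$. Consequently $\nabla \alpha^\eps = E^\eps - \nabla^\perp \beta^\eps \weakto -g$ weakly in $L^2$ and, after normalizing constants, $\alpha^\eps \to \alpha$ strongly in $L^2_\loc$.

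Finally I would split
\[
\int \varphi \, D^\eps \cdot E^\eps \, dx = \int \varphi \, D^\eps \cdot \nabla \alpha^\eps \, dx + \int \varphi \, D^\eps \cdot \nabla^\perp \beta^\eps \, dx.
\]
The second integral tends to zero because $\varphi \, \nabla^\perp \beta^\eps \to \varphi g$ strongly in $L^2$ while $D^\eps \weakto 0$. For the first, integration by parts yields
\[
\int \varphi \, D^\eps \cdot \nabla \alpha^\eps \, dx = -\langle \Div D^\eps, \varphi \alpha^\eps \rangle - \int \alpha^\eps \, D^\eps \cdot \nabla \varphi \, dx,
\]
and both terms vanish in the limit: the first because $\Div D^\eps$ is precompact in $H^{-1}$ while $\varphi \alpha^\eps \to \varphi \alpha$ strongly in $H^1_0$, the second because $\alpha^\eps \, \nabla \varphi \to \alpha \, \nabla \varphi$ strongly in $L^2$ and $D^\eps \weakto 0$. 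Hence $\int \varphi \, D^\eps \cdot E^\eps \, dx \to 0 = \int \varphi \, D \cdot E \, dx$. I expect the main technical obstacle to lie in the cutoff step, where one must confirm that localization preserves the $H^{-1}$-precompactness of the div/curl constraints; this is precisely what the Rellich embedding $L^2(U) \subset H^{-1}(U)$ provides.
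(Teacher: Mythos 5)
The paper does not actually prove Lemma~\ref{lem:div}: it is recalled as a classical result of Murat and Tartar and used as a black box, so there is no in-paper argument to compare yours against. Your proof is the standard one — localization by a cutoff, the two-dimensional Hodge decomposition $E^\eps=\nabla\alpha^\eps+\nabla^\perp\beta^\eps$ with $\beta^\eps$ obtained by inverting the Laplacian on $\Curl E^\eps$, and Rellich compactness to upgrade one factor in each pairing to strong convergence — and the overall structure is sound: the reduction to $D=E=0$, the preservation of the $H^{-1}$-precompactness under multiplication by $\chi$, and the treatment of the $\nabla^\perp\beta^\eps$ term are all correct.

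One step is misstated, though the conclusion survives. You claim $\varphi\alpha^\eps\to\varphi\alpha$ \emph{strongly} in $H^1_0$. This is false in general: $\nabla(\varphi\alpha^\eps)=\alpha^\eps\nabla\varphi+\varphi\nabla\alpha^\eps$, and while the first summand converges strongly in $L^2$ (Rellich applied to $\alpha^\eps$), the second only converges weakly, since $\nabla\alpha^\eps=\chi E^\eps-\nabla^\perp\beta^\eps$ inherits merely the weak $L^2$ convergence of $\chi E^\eps$. The term $\langle \Div D^\eps,\varphi\alpha^\eps\rangle$ still vanishes, but for the transposed reason: after subtracting the limits, $\Div D^\eps$ is precompact in $H^{-1}_{\loc}$ and its only possible distributional limit is $\Div D=0$, so $\Div D^\eps\to 0$ \emph{strongly} in $H^{-1}_{\loc}$; pairing a strongly null sequence in $H^{-1}$ against the merely bounded sequence $\varphi\alpha^\eps$ in $H^1_0$ gives zero. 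You should also record the identification-of-the-limit step explicitly (strong $H^{-1}$ limits of $\Div D^\eps$ and $\Curl(\chi E^\eps)$ must coincide with their distributional limits), since it is what lets you pass from precompactness to actual strong convergence along the chosen subsequence. With these repairs the argument is complete; a final remark that the limit $\int\varphi\,D\cdot E$ is independent of the subsequence (so that, by the usual sub-subsequence argument, no further extraction is needed beyond what the statement already permits) would round it off.
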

We shall use the following compensated compactness result.
\begin{theorem}
  \label{thm:compcomp} Let $\Omega\subset \R\times \R^+$ be a bounded
  open set, and assume that $\seq{u^\eps}$ is a sequence of uniformly
  bounded functions such that $\abs{u^\eps}\le M$ for all $\eps$. Also
  assume that $f:[-M,M] \to \R$ is a twice differentiable
  function. Let $u^\eps \weakstarto u$ and $f(u^\eps)\weakstarto v$,
  and set
  \begin{equation}
    \begin{aligned}
      \left(\eta_1(s),q_1(s)\right) &= \left(s-k,f(s)-f(k)\right),\\
      \left(\eta_2(s),q_2(s)\right) &= \left(f(s)-f(k), \int_k^s
        (f'(\theta))^2\,d\theta \right),
    \end{aligned}\label{eq:entropies}
  \end{equation}
  where $k$ is an arbitrary constant. If
  \begin{equation*}
    \eta_i(u^\eps)_t + q_i(u^\eps)_x \ \text{ is in a compact set of
      $H^{-1}_{\mathrm{loc}}(\Omega)$ for $i=1$, $2$,}
  \end{equation*}
  then
  \begin{enumerate}
  \item $v=f(u)$, a.e.~$(x,t)$,
  \item $u^\eps \to u$, a.e.~$(x,t)$ if $\mathrm{meas}\seq{u\,|\,
      f''(u)=0}=0$.
  \end{enumerate}
\end{theorem}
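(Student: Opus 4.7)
The plan is to invoke the div-curl lemma on the two given entropy pairs and then extract the Tartar commutation identity, whose analysis via Cauchy--Schwarz yields both conclusions.

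First, pass to a subsequence and introduce the associated Young measure $\nu_{(x,t)}$ supported in $[-M,M]$, so that $g(u^\eps)\weakstarto\langle\nu,g\rangle$ for every continuous $g$. In the coordinates $(t,x)$, set
\begin{equation*}
D^\eps := \bigl(\eta_1(u^\eps),\,q_1(u^\eps)\bigr),\qquad E^\eps := \bigl(q_2(u^\eps),\,-\eta_2(u^\eps)\bigr).
\end{equation*}
Both sequences are bounded in $L^\infty_\loc\subset L^2_\loc$ by $\abs{u^\eps}\le M$ and the smoothness of $f$, while $\Div D^\eps=\partial_t\eta_1(u^\eps)+\partial_x q_1(u^\eps)$ and $\Curl E^\eps=-\bigl(\partial_t\eta_2(u^\eps)+\partial_x q_2(u^\eps)\bigr)$ are compact in $H^{-1}_\loc$ by hypothesis. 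Lemma \ref{lem:div} applied on arbitrary bounded subdomains then yields, a.e.\ $(x,t)$,
\begin{equation*}
\langle \nu,\eta_1 q_2-q_1\eta_2\rangle=\langle\nu,\eta_1\rangle\langle\nu,q_2\rangle-\langle\nu,q_1\rangle\langle\nu,\eta_2\rangle.
\end{equation*}

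Substituting \eqref{eq:entropies} with $G(s):=\int_0^s(f'(\theta))^2\,d\theta$, every term containing the arbitrary constant $k$ cancels on both sides, and a standard symmetrization on the product measure $\nu\otimes\nu$ collapses the identity to
\begin{equation*}
\int\!\!\int\Bigl[(s-t)\bigl(G(s)-G(t)\bigr)-\bigl(f(s)-f(t)\bigr)^2\Bigr]\,d\nu(s)\,d\nu(t)=0.
\end{equation*}
By the Cauchy--Schwarz inequality $\bigl(\int_t^s f'(\theta)\,d\theta\bigr)^2\le\abs{s-t}\,\abs{G(s)-G(t)}$, the bracketed integrand is pointwise nonnegative, and vanishes precisely when $s=t$ or $f$ is affine on $[\min(s,t),\max(s,t)]$.

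Hence, at a.e.\ $(x,t)$, either $\supp\nu_{(x,t)}$ is a single point, or $f$ is affine on the convex hull of $\supp\nu_{(x,t)}$. In both situations Jensen's equality yields $\langle\nu,f\rangle=f(\langle\nu,s\rangle)$, i.e.\ $v=f(u)$ a.e., which is (1). Under the additional hypothesis $\mathrm{meas}\seq{u\st f''(u)=0}=0$, $f$ cannot be affine on any interval of positive length, so the second alternative is impossible and $\nu_{(x,t)}=\delta_{u(x,t)}$ a.e., equivalent to the pointwise convergence stated in (2). The one genuine subtlety is justifying the Young-measure representation cleanly enough that the div-curl lemma can be applied to the products $D^\eps\cdot E^\eps$; once this is in place, the reduction to the Cauchy--Schwarz deficit is routine algebra.
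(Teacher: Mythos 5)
Your argument is correct, but it is not the route the paper takes: the paper gives no self-contained proof of Theorem~\ref{thm:compcomp} at all, referring instead to Lu's monograph, and it explicitly advertises that the cited proof \emph{avoids the Young measure}, following the Chen--Lu strategy of applying the div-curl lemma directly to weak-$*$ limits of the entropy pairs and exploiting the free parameter $k$ (e.g.\ evaluated at Lebesgue points of $u$) to play the role your measure $\nu_{(x,t)}$ plays. Your proof is the classical Tartar argument: Young measure, commutation identity, cancellation of $k$, symmetrization over $\nu\otimes\nu$, and the Cauchy--Schwarz deficit $\bigl(\int_t^s f'\bigr)^2\le (s-t)\bigl(G(s)-G(t)\bigr)$. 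I checked the algebra: the $k$-terms do cancel (both sides reduce to $\langle\nu,sG\rangle-\langle\nu,s\rangle\langle\nu,G\rangle-\langle\nu,f^2\rangle+\langle\nu,f\rangle^2$), the symmetrized double integral equals twice that quantity, and the equality case of Cauchy--Schwarz forces $f'$ constant between $s$ and $t$, so your dichotomy and the Jensen-equality conclusion for (1) are sound. What each approach buys: yours is more transparent in reducing everything to a pointwise-nonnegative integrand on $\supp\nu\otimes\supp\nu$, at the cost of invoking the fundamental theorem of Young measures (harmless here since every nonlinearity involved is continuous on $[-M,M]$, which is exactly the caveat the paper raises); the Chen--Lu route is more elementary in its prerequisites but requires more delicate manipulation of the weak limits. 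Two small steps you gloss over and should record: passing from ``the integrand vanishes $\nu\otimes\nu$-a.e.'' to ``$f$ is affine on the convex hull of $\supp\nu_{(x,t)}$'' needs the continuity of $f'$ together with a short density argument on pairs of points of the support; and in (2) the Dirac structure of $\nu_{(x,t)}$ first gives convergence in measure (equivalently in $L^p_{\loc}$ for $p<\infty$), with a.e.\ convergence obtained along a further subsequence. Neither is a gap, merely routine bookkeeping.
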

For a proof of this theorem, see the monograph of Lu \cite{Lu}. A
feature of the compensated compactness result above is that it avoids
the use of the Young measure by following an approach developed by
Chen and Lu \cite{Lu,Chen} for the standard scalar conservation
law. This is preferable as the fundamental theorem of Young measures
applies most easily to functions that are continuous in all variables.

The following compactness interpolation result (known as Murat's lemma
\cite{Murat}) is useful in obtaining the $H^{-1}_{\loc}$ compactness
needed in Theorem ~\ref{thm:compcomp}.
\begin{lemma}
  \label{lem:Murat}
  Let $\Omega$ be a bounded open subset of $\R^2$.  Suppose that the
  sequence $\seq{\CL_\eps}_{\eps>0}$ of distributions is bounded in
  $W^{-1,\infty}(\Omega)$.  Suppose also that
  $$
  \CL_\eps=\CL_{1,\eps} + \CL_{2,\eps},
  $$
  where $\seq{\CL_{1,\eps}}_{\eps>0}$ is in a compact subset of
  $H^{-1}(\Omega)$ and $\seq{\CL_{2,\eps}}_{\eps>0}$ is in a bounded
  subset of $\CMloc(\Omega)$.  Then $\seq{\CL_\eps}_{\eps>0}$ is in a
  compact subset of $H^{-1}_{\mathrm{loc}}(\Omega)$.
\end{lemma}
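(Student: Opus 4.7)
The plan is to reduce precompactness in $H^{-1}_{\loc}$ to the weaker property of precompactness in $W^{-1,1}_{\loc}$, and then upgrade via an interpolation estimate that exploits the $W^{-1,\infty}$ bound. Fix a bounded open $\omega$ with $\bar\omega \subset \Omega$ and, say, Lipschitz boundary; a diagonal argument over an exhaustion $\omega_k$ of $\Omega$ will promote the conclusion to $H^{-1}_{\loc}(\Omega)$.

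First I would show that $\seq{\CL_\eps}$ is precompact in $W^{-1,1}(\omega)$. For the piece $\CL_{1,\eps}$: its precompactness in $H^{-1}(\Omega)$ restricts to precompactness in $H^{-1}(\omega)$ (zero extension is an isometric embedding $H^1_0(\omega)\hookrightarrow H^1_0(\Omega)$), and $H^{-1}(\omega)\hookrightarrow W^{-1,1}(\omega)$ continuously because $\|\phi\|_{H^1_0(\omega)}\le |\omega|^{1/2}\|\phi\|_{W^{1,\infty}_0(\omega)}$ by Cauchy--Schwarz. For $\CL_{2,\eps}$: the embedding $W^{1,\infty}_0(\omega)\hookrightarrow C(\bar\omega)$ is compact by Arzel\`a--Ascoli (uniform Lipschitz bound gives equicontinuity, the boundary condition gives uniform boundedness); dualizing via Schauder's theorem yields a compact embedding $\mathcal{M}(\bar\omega)\hookrightarrow W^{-1,1}(\omega)$, so boundedness of $\seq{\CL_{2,\eps}}$ in $\CMloc(\Omega)$ forces precompactness in $W^{-1,1}(\omega)$. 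Summing, $\seq{\CL_\eps}$ is precompact in $W^{-1,1}(\omega)$ along a common subsequence.

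The upgrade step uses the interpolation inequality
\begin{equation*}
\|T\|_{H^{-1}(\omega)} \le C_\omega \, \|T\|_{W^{-1,1}(\omega)}^{1/2}\|T\|_{W^{-1,\infty}(\omega)}^{1/2},
\end{equation*}
which is the complex-interpolation identity $[W^{-1,1}(\omega),W^{-1,\infty}(\omega)]_{1/2}=H^{-1}(\omega)$. Applied to $T=\CL_\eps-\CL_{\eps'}$ along the subsequence extracted above: the $W^{-1,1}(\omega)$ factor tends to zero (Cauchy from the first step), while the $W^{-1,\infty}(\omega)$ factor remains bounded by $2\sup_\eps\|\CL_\eps\|_{W^{-1,\infty}(\Omega)}$. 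Hence the subsequence is Cauchy in $H^{-1}(\omega)$, as required.

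The main obstacle is the interpolation inequality. Although classical, giving it a clean proof on a bounded Lipschitz $\omega$ either invokes the machinery of complex interpolation for Bessel-potential spaces, or else requires a hands-on approximation argument that decomposes an $H^1_0$ test function into a $W^{1,\infty}_0$ piece and a small $W^{1,1}_0$ remainder and balances the two pairings against $T$. The other ingredients, namely $\mathcal{M}\hookrightarrow W^{-1,1}$ and $H^{-1}\hookrightarrow W^{-1,1}$ on bounded domains, are routine consequences of Sobolev duality combined with Arzel\`a--Ascoli and Schauder's theorem.
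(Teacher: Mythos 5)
The paper does not prove this lemma at all: it is quoted as a known result (Murat's lemma) with a citation to \cite{Murat}, so there is no in-paper argument to compare against and your proposal has to stand on its own.

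On its own terms, your argument has a genuine gap at exactly the step you flag as ``the main obstacle'': the interpolation inequality
$\norm{T}_{H^{-1}(\omega)} \le C\,\norm{T}_{W^{-1,1}(\omega)}^{1/2}\norm{T}_{W^{-1,\infty}(\omega)}^{1/2}$.
This is not a citable classical fact. The complex interpolation theory for Bessel-potential/Sobolev scales is developed for $1<p<\infty$; both of your endpoints lie outside that range, and $W^{-1,1}(\omega)$, understood as the dual of $W^{1,\infty}_0(\omega)$, is itself problematic (smooth functions are not dense in $W^{1,\infty}$, so the identification of this dual with a space of distributions, and hence the meaning of $\norm{T}_{W^{-1,1}}$ for a distribution $T$, is ambiguous). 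Asserting ``$[W^{-1,1},W^{-1,\infty}]_{1/2}=H^{-1}$'' therefore does not close the argument, and I do not believe the identity holds in this form. The standard proof of Murat's lemma is engineered precisely to avoid this double endpoint: one embeds $\mathcal{M}(\bar\omega)$ compactly into $W^{-1,q}(\omega)$ for some $1<q<n/(n-1)$ (here $n=2$, so $1<q<2$), using the compact embedding $W^{1,q'}_0(\omega)\hookrightarrow C(\bar\omega)$ for $q'>2$ and Schauder's theorem --- the same mechanism you use, just stopped short of $q=1$. One then observes that the $W^{-1,\infty}(\Omega)$ bound gives a $W^{-1,r}(\omega)$ bound for every finite $r$, and interpolates between $W^{-1,q}$ and $W^{-1,r}$ with $1<q<2<r<\infty$, where the inequality $\norm{T}_{W^{-1,2}}\le \norm{T}_{W^{-1,q}}^{\theta}\norm{T}_{W^{-1,r}}^{1-\theta}$ can be proved concretely via the representation $T=f_0-\sum_i \partial_i f_i$ with $f_i\in L^q\cap L^r$ and H\"older's inequality. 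The rest of your argument (restriction of the $H^{-1}$-compact piece, the diagonal exhaustion, the Cauchy-sequence upgrade) is fine and would go through verbatim once the measure part is placed in $W^{-1,q}$ for some $q\in(1,2)$ instead of $W^{-1,1}$.
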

Next, we shall need Kolmogorov's compactness lemma.
\begin{lemma}[$L^1_{\loc}$ compactness, see \cite{Holden}]
  \label{lem:kolmogorov}
  Let $u^{\eps} : \R \times [0,\infty) \rightarrow \R$ be a family of
  functions such that for each positive $T$,
  \begin{align*}
    \abs{u^{\eps} (x,t)} \le C_T, \,\, (x,t) \in \R \times [0,T]
  \end{align*}
  for a constant $C_T$ independent of $\eps$. Assume in addition that
  for all compact $B \subset \R$ and for $t \in [0,T]$
  \begin{align*}
    \sup_{\abs{\xi} \le \abs{\rho}} \int_{B} \abs{u^{\eps}(x + \xi, t)
      - u^{\eps} (x,t)} \,dx \le \nu_{B,T}(\abs{\rho}),
  \end{align*}
  for a modulus of continuity $\nu_{B,T}$. Furthermore, assume for $s$ and
  $t$ in $[0,T]$ that
  \begin{align*}
    \int_{B} \abs{u^{\eps}(x , t) - u^{\eps} (x,s)} \,dx \le
    \omega_{B,T}(\abs{t-s}) \,\, \text{as} \,\, \eps \downarrow 0,
  \end{align*}
  for some modulus of continuity $\omega_{B,T}$. Then there exists a
  sequence $ \seq{\eps_j}$ such that for each $t \in [0,T]$
  the function $\lbrace u^{\eps_j}(t) \rbrace$ converges to a function
  $u(t)$ in $L^{1}_{\loc}(\R)$. The convergence is in $C([0,T];
  L^1_{\loc}(\R))$.
\end{lemma}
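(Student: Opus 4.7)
The plan is to prove this in the classical Fréchet--Kolmogorov / Ascoli--Arzelà spirit: first get $L^1$-compactness at each fixed time from the spatial modulus of continuity, then glue across time using the temporal modulus of continuity via a diagonal argument.

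First I would fix a compact interval $B\subset\R$ and a positive $T$. For each fixed $t\in[0,T]$ the family $\seq{u^\eps(\cdot,t)}$ is uniformly bounded in $L^\infty(B)$ and has a uniform $L^1(B)$ modulus of continuity under translations, namely $\sup_{\abs{\xi}\le\abs{\rho}}\int_B\abs{u^\eps(x+\xi,t)-u^\eps(x,t)}\,dx\le\nu_{B,T}(\abs{\rho})$. By the Fréchet--Kolmogorov theorem (applied by mollification: convolve with a standard smooth bump $\rho_\delta$, observe $\norm{u^\eps*\rho_\delta-u^\eps}_{L^1(B)}\le\nu_{B,T}(\delta)$ uniformly in $\eps$, and note that $u^\eps*\rho_\delta$ is uniformly bounded and uniformly continuous so Ascoli applies), the family $\seq{u^\eps(\cdot,t)}_\eps$ is precompact in $L^1(B)$.

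Next I would fix a countable dense subset $\seq{t_j}_{j\in\N}\subset[0,T]$ and, by a standard diagonal extraction, select a subsequence $\seq{u^{\eps_j}}$ such that $u^{\eps_j}(\cdot,t_k)$ converges in $L^1(B)$ to some limit $u(\cdot,t_k)$ for every $k$. The time-equicontinuity hypothesis $\int_B\abs{u^\eps(x,t)-u^\eps(x,s)}\,dx\le\omega_{B,T}(\abs{t-s})$ (which we may take to hold for all $\eps_j$ by passing to a tail of the sequence if necessary, in view of the ``as $\eps\downarrow 0$'' formulation) then upgrades this to convergence at every $t\in[0,T]$: for arbitrary $t$ and given $\eta>0$, pick $t_k$ with $\omega_{B,T}(\abs{t-t_k})<\eta/3$ and use the triangle inequality
\begin{equation*}
\norm{u^{\eps_j}(\cdot,t)-u^{\eps_l}(\cdot,t)}_{L^1(B)}\le 2\omega_{B,T}(\abs{t-t_k})+\norm{u^{\eps_j}(\cdot,t_k)-u^{\eps_l}(\cdot,t_k)}_{L^1(B)},
\end{equation*}
which is $<\eta$ for $j,l$ large. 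This yields a pointwise-in-$t$ limit $u(\cdot,t)\in L^1(B)$, and passing to the limit in the time modulus gives $\norm{u(\cdot,t)-u(\cdot,s)}_{L^1(B)}\le\omega_{B,T}(\abs{t-s})$, so $u\in C([0,T];L^1(B))$.

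Finally, I would strengthen pointwise-$t$ convergence to uniform-$t$ convergence, i.e.\ convergence in $C([0,T];L^1(B))$. Given $\eta>0$, choose a finite partition $0=\sigma_0<\sigma_1<\cdots<\sigma_N=T$ with $\omega_{B,T}(\sigma_{i+1}-\sigma_i)<\eta/4$; by what was just proved, $u^{\eps_j}(\cdot,\sigma_i)\to u(\cdot,\sigma_i)$ in $L^1(B)$ for each $i$, and for arbitrary $t$ one bounds $\norm{u^{\eps_j}(\cdot,t)-u(\cdot,t)}_{L^1(B)}$ by $\norm{u^{\eps_j}(\cdot,\sigma_i)-u(\cdot,\sigma_i)}_{L^1(B)}$ plus two time-equicontinuity terms for a nearby $\sigma_i$. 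A second diagonal argument over an exhausting sequence of compacts $B_m\nearrow\R$ then delivers a subsequence converging in $C([0,T];L^1_\loc(\R))$. The main technical point to handle carefully is the asymptotic nature of the time-equicontinuity assumption (``as $\eps\downarrow 0$''), which forces one to discard finitely many indices when invoking the triangle inequality above; aside from that, every step is a direct application of Fréchet--Kolmogorov and Cantor diagonalization.
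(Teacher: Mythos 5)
The paper does not prove this lemma at all; it is quoted verbatim from the cited reference \cite{Holden}, whose proof is exactly the standard Fr\'echet--Kolmogorov-plus-Arzel\`a--Ascoli argument you give (spatial translation bound for precompactness at fixed times, temporal modulus for equicontinuity, diagonalization over a dense set of times and an exhaustion of $\R$ by compacts). Your proposal is correct and coincides with that standard proof, including the sensible handling of the ``as $\eps\downarrow 0$'' caveat by discarding finitely many indices.
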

Finally, we state the following result related to Wagnar
transformation theory.
\begin{lemma}[Wagner Transformation, see \cite{Wagner,Freistuler1}]
  \label{lem:wagnar}
  For any $n \in \N$, there is a one-to-one correspondence between
  (equivalence classes of) bounded Lebesgue measurable solutions $(r,
  rv): \R^2_{+} \rightarrow [0, \infty) \times \R^m$ to the system
  \eqref{eq:req}--\eqref{eq:veq1} which satisfy
  \begin{align*}
    \int_{-\infty}^0 r(x,t)\,dx = \int_0^{\infty} r(x,t)\,dx = \infty
  \end{align*}
  and (equivalence classes of) weak solutions $(\tau, \tilde{v})$ to
  the system
  \begin{equation}
    \begin{aligned}
      \tau_t - \left(\phi(1/\tilde{\tau}) \right)_y & = 0, \\
      \tilde{v}_t & = 0,
    \end{aligned}
    \label{eq:newsys}
  \end{equation}
  in which $\tau$ is a Radon measure in $\R^2_{+}$ which dominates
  Lebesgue (outer) measure $\lambda_2$ (i.e., $\tau \ge k \lambda_2$
  for some $k > 0$), $\tilde{\tau}$ is the density of the absolutely
  continuous part of $\tau$ with respect to $\lambda_2$, and
  $\tilde{v}: \R^2_{+} \rightarrow \R^m$ is bounded and Lebesgue
  measurable. This correspondence is established through
  transformations $T:(x,t) \rightarrow (y(x,t),t)$ defined relative to
  any bounded measurable solutions to \eqref{eq:req} by
  \begin{equation}
    \begin{aligned}
      \frac{\partial y}{\partial x} (x,t) = r(x,t), \qquad
      \frac{\partial y}{\partial t} (x,t) = - \phi(r(x,t))\,r(x,t),
      \qquad y(0,0)=0,
    \end{aligned}
    \label{eq:yeq}
  \end{equation}
  namely setting
  \begin{equation}
    \begin{aligned}
      \tau & = \lambda_2 \circ T^{-1}, \\
      \tilde{v} & = v \circ T^{-1}.
    \end{aligned}
    \label{eq:transformation}
  \end{equation}
\end{lemma}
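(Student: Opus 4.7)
The plan is to prove the correspondence in two directions: forward (building $(\tau,\tilde v)$ from a given $(r,rv)$) and inverse (recovering $(r,rv)$ from a given $(\tau,\tilde v)$), with the well-definedness of the transformation $T$ and the change-of-variables computations doing most of the work.

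First I would verify that the potential $y(x,t)$ determined by \eqref{eq:yeq} is well defined. The compatibility condition $\partial_t(y_x)=\partial_x(y_t)$ reads $r_t=-\bigl(r\phi(r)\bigr)_x$, which is exactly \eqref{eq:req}. Hence the $1$-form $r\,dx-r\phi(r)\,dt$ is closed on $\R^2_+$ in the distributional sense, and the normalization $y(0,0)=0$ fixes a unique representative. Because $r\ge 0$, $x\mapsto y(x,t)$ is nondecreasing; the hypothesis $\int_{-\infty}^0 r\,dx=\int_0^\infty r\,dx=\infty$ promotes this to surjectivity of $y(\cdot,t):\R\to\R$ for each $t$, so $T(x,t)=(y(x,t),t)$ maps $\R^2_+$ onto itself up to a measure-zero set.

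Next I would carry out the change of variables. Formally the Jacobian is
\[
\det\,DT=\det\begin{pmatrix} y_x & y_t \\ 0 & 1\end{pmatrix}=r,
\]
so pushing Lebesgue measure forward gives $\tau=T_*\lambda_2$ with absolutely continuous density $\tilde\tau=1/r$ on the set $\{r>0\}$; on shock lines of $r$ (where $y$ has a jump in $x$) the pushforward acquires a singular part supported on vertical rays, which is why the statement phrases $\tau$ as a Radon measure dominating Lebesgue rather than a function. The inverse map $x=x(y,t)$ then satisfies $x_y=\tilde\tau$ and, by the implicit differentiation $y_x x_t+y_t=0$, $x_t=\phi(r)=\phi(1/\tilde\tau)$. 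The mixed-partial identity $x_{yt}=x_{ty}$ yields $\tilde\tau_t=(\phi(1/\tilde\tau))_y$, which is precisely \eqref{eq:newsys}$_1$ in weak form (after lifting from $\tilde\tau$ to $\tau$). For the velocity, $\tilde v(y,t):=v(x(y,t),t)$ gives $\tilde v_t=v_t+v_x x_t=v_t+\phi(r)v_x$, which is \eqref{eq:veq2}, so $\tilde v_t=0$. I would justify each of these formal manipulations against smooth test functions by pulling back through $T$ and using the pushforward definition of $\tau$; the weak formulation of \eqref{eq:req}--\eqref{eq:veq1} gives exactly what is needed, with no derivatives of $r$ appearing.

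For the inverse direction, starting from $(\tau,\tilde v)$ with $\tau\ge k\lambda_2$, the lower bound on $\tilde\tau$ forces $r:=1/\tilde\tau\le 1/k$, so $r\in L^\infty$. The equation \eqref{eq:newsys}$_1$ plays the role of a compatibility condition for the form $\tilde\tau\,dy+\phi(1/\tilde\tau)\,dt$, allowing one to define $x(y,t)$ and hence $T^{-1}$; setting $v=\tilde v\circ T$ and undoing the change of variables recovers \eqref{eq:req}--\eqref{eq:veq1}. Injectivity of the correspondence is then immediate from the explicit formulas, and the $\infty$-mass hypothesis matches the requirement that $T$ be onto.

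The hard part will be the singular-measure bookkeeping: across shocks of $r$, the map $T$ is not a diffeomorphism, and pullback of test functions must be done with care so that the singular component of $\tau$ enters the weak form \eqref{eq:newsys} correctly. I would handle this by first approximating $r$ by smooth strictly positive solutions of a viscous regularization of \eqref{eq:req}, performing the change of variables in the smooth category, and passing to the limit using the uniform bounds $r\le M$ and the integrability assumption; the limiting object is the Radon measure $\tau$ with the prescribed dominance and density structure. This regularization strategy is the one used in Wagner's original argument.
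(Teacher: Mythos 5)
The paper does not prove this lemma at all: it is imported verbatim from Wagner \cite{Wagner} and Freist\"{u}hler \cite{Freistuler1} and used as a black box, so there is no internal proof to compare your attempt against. Judged on its own terms, your outline correctly identifies the skeleton of the Euler--Lagrange argument: the closedness of the form $r\,dx - r\phi(r)\,dt$ is equivalent to the weak form of \eqref{eq:req}, the infinite-mass hypothesis gives surjectivity of $y(\cdot,t)$, the Jacobian computation yields $\tilde\tau = 1/r$ and $x_t = \phi(r)$, and \eqref{eq:veq2} transforms into $\tilde v_t = 0$.

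There is, however, a concrete error in your account of where the singular part of $\tau$ comes from. Since $y_x = r$ with $r$ bounded, $y(\cdot,t)$ is Lipschitz in $x$ and cannot jump at shocks of $r$; shocks of $r$ (with $r>0$ on both sides) produce no singular mass whatsoever. The singular part of $\tau = \lambda_2\circ T^{-1}$ arises from \emph{vacuum} regions, i.e.\ sets where $r=0$ on an $x$-interval: there $y(\cdot,t)$ is locally constant, an entire $x$-interval collapses to a single $y$-value, and the pushforward acquires an atom in $y$. This matters because your proposed regularization (smooth, strictly positive approximations of $r$) is designed to smooth out exactly the wrong feature; to recover the singular part in the limit you must track where the approximating densities degenerate to zero, not where they jump. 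Beyond this, the genuinely hard content of Wagner's theorem --- showing that the weak formulations on either side of a merely Lipschitz, non-injective change of variables are equivalent for $L^\infty$ data, and that the correspondence is a bijection on equivalence classes --- is acknowledged but not carried out; as it stands the proposal is a plan for a proof rather than a proof. For the purposes of this paper that is moot, since the authors never attempt the proof either, but the vacuum/shock confusion should be corrected before the sketch could be developed further.
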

Observe that using \eqref{eq:newsys}, \eqref{eq:yeq} and
\eqref{eq:transformation}, it is easy to conclude that $\abs{v} =1$.

\section{A semi-discrete finite difference scheme}
\label{sec:semi}
We start by introducing some notation needed to define the
semi-discrete finite difference schemes. Throughout this paper we
reserve $\Dx$ to denote a small positive number that represent the
spatial discretization parameter of the numerical schemes. Given
$\Dx>0$, we set $x_j=j\Dx$ for $j\in \Z$ and for any function $u =
u(x)$ admitting pointvalues we write $u_j = u(x_j)$. Furthermore, let
us introduce the spatial grid cells
\begin{align*}
  I_j = [x_{j-1/2}, x_{j+1/2}),
\end{align*}
where $x_{j\pm1/2} = x_j \pm \Dx/2$. Let $D_{\pm}$ denote the discrete
forward and backward differences, i.e.,
\begin{equation*}
  D_{\pm}u_j = \mp\frac{u_j - u_{j\pm 1}}{\Dx}. 
\end{equation*}
The discrete Leibnitz rule is given by
\begin{align*}
  D_{\pm} (u_j v_j) = u_j D_{\pm} v_j + v_{j \pm 1} D_{\pm} u_j
\end{align*}
Furthermore, for any $C^2$ function $f$, using the Taylor expansion on
the sequence $f(u_j)$ we obtain
\begin{align*}
  D_{\pm} f(u_j) = f'(u_j) D_{\pm} u_j \pm \frac{\Dx}{2} f''(\xi_{j
    \pm \frac{1}{2}}) (D_{\pm} u_j)^2,
\end{align*}
for some $\xi_{j\pm \frac{1}{2}}$ between $u_{j \pm 1}$ and $u_j$. We
will make frequent use of this, which states that a discrete chain
rule holds up to an error term of order $\Dx (D_{\pm} u_j)^2$.  To a
sequence $\seq{u_j}_{j\in \Z}$ we associate the function $u_\Dx$
defined by
\begin{equation*}
  u_{\Dx}(x)= \sum_{j \in \Z} u_j \mathds{1}_{I_j}(x).
\end{equation*}
Similarly, we also define $r_{\Dx}$ as
\begin{equation*}
  r_{\Dx}(x)= \sum_{j \in \Z} r_j \mathds{1}_{I_j}(x),
\end{equation*}
where $\mathds{1}_{A}$ denotes the characteristic function of the set
$A$.  Throughout this paper we use the notations $u_{\Dx}, r_{\Dx}$ to
denote the functions associated with the sequence $\seq{u_j}_{j\in\Z}$
and $\seq{r_j}_{j\in\Z}$ respectively. For later use, recall that the
$L^{\infty}(\R)$ norm, the $L^1(\R)$ norm, the $L^2(\R)$ norm, and the
$BV(\R)$ semi-norm of a lattice function $u_{\Dx}$ are defined
respectively as
\begin{equation*}
  \begin{aligned}
    & \norm{u_{\Dx}}_{L^{\infty}(\R)} = \sup_{j \in \Z} \abs{u_j}, \\
    & \norm{u_\Dx}_{L^1(\R)} = \Dx \sum_{j\in\Z} \abs{u_j}, \\
    & \norm{u_\Dx}_{L^2(\R)} = \sqrt{\Dx \sum_{j\in\Z} \abs{u_j}^2}, \\
    & \abs{u_{\Dx}}_{BV(\R)} = \sum_{j \in \Z} \abs{u_j - u_{j-1}}.
  \end{aligned}
\end{equation*}
Observe that all the eigenvalues of the system \eqref{eq:system} are
positive by our assumptions. Therefore we consider the following
semi-discrete upwind finite difference scheme
\begin{equation}
  \label{eq:discrete}
  u_j'(t) + \Dmx \left(\phi(r_j(t)) u_j(t)\right)=0, \, \, \text{for} \, \, j \in \Z, \, \, t >0, 
\end{equation}
with initial values
\begin{equation}
  \label{eq:discrete_init}
  u_j(0)=\frac{1}{\Dx}\int_{x_{j-1/2}}^{x_{j+1/2}}\! u_0(x)\,dx,
\end{equation}
where $r_j (t)= \abs{u_j(t)}$.  We have that $\seq{u_j(t)}_{j\in\Z}$
satisfy the (infinite) system of ordinary differential equations and
it is natural to view \eqref{eq:discrete} as an ordinary differential
equation in $L^2(\R)^n$, since the piecewise constant structure of
$u_{\Dx}$ is preserved by the evolution equation
\eqref{eq:discrete}. To show the local (in time) existence and
uniqueness of differentiable solutions we must show that the right
hand side of \eqref{eq:discrete} is Lipschitz continuous in
$L^2(\R)^n$. Set
\begin{equation*}
  F(u_\Dx)_j= \Dm\left(\phi(r_j)u_j\right).
\end{equation*}
The infinite system of differential equations \eqref{eq:discrete} can
then be written
\begin{equation*}
  \frac{d}{dt}\left(u_\Dx(t)\right) = F(u_\Dx)_\Dx.
\end{equation*}
We view $F(u_\Dx)_\Dx$ as an element in $L^2(\R)^n$.  To establish
that this system has a unique solution (at least locally in time) we show
that
\begin{equation}
  \label{eq:lipschitz}
  \norm{F(u_\Dx)_\Dx-F(v_\Dx)_\Dx}_{L^2(\R)^n} \le \gamma
  \norm{u_\Dx-v_\Dx}_{L^2(\R)^n}  
\end{equation}
for some locally bounded $\gamma=\gamma(u_\Dx,v_\Dx)$ and for a fixed
$\Dx>0$.  Set $\tilde{r}_j = \abs{v_j}$, note that
\begin{equation*}
  \abs{r_j-\tilde{r}_j} \le \frac{\abs{u_j+v_j}}{r_j+\tilde{r}_j}
  \abs{u_j-v_j}
  \le \abs{u_j-v_j}.
\end{equation*}
Then
\begin{align*}
  \norm{F(u_\Dx)-F(v_\Dx)}_{L^2(\R)^n} &\le \frac{2}{\Dx}
  \Bigl(\sup_j\abs{u_j} \norm{\phi'}_{L^\infty}
  \norm{r_\Dx-\tilde{r}_\Dx}_{L^2(\R)}\\
  &\qquad \qquad + \norm{\phi}_{L^\infty}
  \norm{u_\Dx-v_\Dx}_{L^2(R)^n} \Bigr)\\
  &\le \gamma \norm{u_\Dx-v_\Dx}_{L^2(R)^n},
\end{align*}
where we have used Assumption~\ref{def:w2}.
Therefore $F$ is locally Lipschitz continuous, and there is a $\tau>0$
so that the initial value problem \eqref{eq:discrete} has a unique
differentiable solution for $t\in [0,\tau)$, if $\tau<\infty$, then
\begin{equation*}
  \lim_{t\uparrow \tau} \norm{u_\Dx(t)}_{L^2(\R)^n} = \infty.
\end{equation*}
We shall proceed to show that the $L^2$ norm remains bounded if it is
bounded initially, so the solution can be defined up to any time.
\begin{lemma}
  \label{lem:rl11}
  Assume that \ref{def:w1}, \ref{def:w2} and \ref{def:w4} hold, and
  let $\seq{u_j(t)}$ be defined by \eqref{eq:discrete}, and let
  $r_j(t) = \abs{u_j(t)}$. Then
  \begin{align*}
    \norm{r_\Dx(t)}_{L^1(\R)} &\le \norm{r_\Dx(0)}_{L^1(\R)}, \\
    \norm{r_\Dx(t)}_{L^2(\R)} &\le \norm{r_\Dx(0)}_{L^2(\R)}, \\
    \norm{r_\Dx(t)}_{L^{\infty}(\R)} &\le
    \norm{r_\Dx(0)}_{L^{\infty}(\R)}.
  \end{align*}
  Furthermore, there is a constant $C$, independent of $\Dx$ and $T$,
  such that
  \begin{equation}
    \label{eq:l2extra}
    \begin{aligned}
      \int_0^T \left(\sum_j \int_{r_{j-1}}^{r_j}\!\!\!
        \left(r_j^2-s^2\right)\phi'(s)\,ds + \Dx\sum_j\phi_{j-1} \Dx
        \abs{\Dm u_j}^2 \right) \,dt\le C.
    \end{aligned}
  \end{equation}
\end{lemma}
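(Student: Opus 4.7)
The plan is to split the proof into two largely independent parts: a pointwise scalar differential inequality for $r_j(t) = \abs{u_j(t)}$ which delivers the $L^1$ and $L^\infty$ bounds, and a full discrete energy identity for $\abs{u_j}^2$ which simultaneously yields the $L^2$ bound and the dissipation estimate \eqref{eq:l2extra}. Crucially, the scalar inequality is obtained by losing information via Cauchy--Schwarz, while the energy identity is exact.

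For the scalar inequality, I would differentiate $\tfrac12\abs{u_j}^2$ along the scheme \eqref{eq:discrete} to obtain
\[
  r_j r_j' = -\tfrac{\phi(r_j)}{\Dx} r_j^2 + \tfrac{\phi(r_{j-1})}{\Dx} \langle u_j, u_{j-1}\rangle,
\]
and then apply the Cauchy--Schwarz bound $\langle u_j, u_{j-1}\rangle \le r_j r_{j-1}$. After dividing by $r_j$ (regularising $\abs{u_j}$ by $\sqrt{\abs{u_j}^2+\eps^2}$ and sending $\eps \downarrow 0$ to cover the set where $r_j=0$), this yields, a.e.\ in $t$,
\[
  r_j'(t) + \Dm\bigl(f(r_j)\bigr) \le 0, \qquad f(r):= r\phi(r),
\]
with $f$ non-decreasing by Assumption~\ref{def:w1}. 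Summing against $\Dx$ and telescoping gives the $L^1$ bound. For the $L^\infty$ bound, I set $M:= \norm{r_\Dx(0)}_{L^\infty}$ and differentiate $\Phi(t):= \Dx \sum_j \max(r_j(t)-M,0)$. Decomposing $\seq{j : r_j(t) > M}$ into maximal runs of consecutive indices and telescoping within each run, monotonicity of $f$ forces the boundary contribution $f(r_{j^+}) - f(r_{j^- - 1})$ to be non-negative, so $\Phi'(t) \le 0$; since $\Phi(0)=0$ and $\Phi \ge 0$, the bound $r_j(t) \le M$ persists.

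For the $L^2$ bound and \eqref{eq:l2extra}, I would work with $\abs{u_j}^2$ directly (which is smooth in $t$, avoiding any issue at $u_j = 0$). Substituting the polarisation identity $2\langle u_j, u_{j-1}\rangle = \abs{u_j}^2 + \abs{u_{j-1}}^2 - \abs{u_j - u_{j-1}}^2$ into $\tfrac{d}{dt}\tfrac12\norm{u_\Dx}_{L^2}^2$ and shifting indices in the term $\sum_j \phi(r_{j-1})r_{j-1}^2$ produces the \emph{exact} identity
\[
  \tfrac{d}{dt}\tfrac12\norm{u_\Dx}_{L^2}^2 = -\tfrac12 \sum_j r_j^2 \bigl(\phi(r_j) - \phi(r_{j-1})\bigr) - \tfrac{\Dx}{2} \sum_j \phi(r_{j-1}) \Dx \abs{\Dm u_j}^2.
\]
The second term is already the dissipation appearing in \eqref{eq:l2extra}. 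For the first, I would write $\phi(r_j)-\phi(r_{j-1}) = \int_{r_{j-1}}^{r_j}\phi'(s)\,ds$ and observe the key bookkeeping identity $\sum_j \int_{r_{j-1}}^{r_j} s^2\phi'(s)\,ds = 0$, obtained by integrating by parts ($s^2\phi'(s)\,ds = d(s^2\phi(s)) - 2s\phi(s)\,ds$) and telescoping both resulting exact differentials. Subtracting this zero allows the replacement $r_j^2 \mapsto r_j^2 - s^2$, producing the manifestly non-negative dissipation $\sum_j \int_{r_{j-1}}^{r_j}(r_j^2 - s^2)\phi'(s)\,ds$. Integrating in time then simultaneously gives the $L^2$ bound and \eqref{eq:l2extra} with $C = \norm{u_\Dx(0)}_{L^2}^2 \le \norm{\abs{u_0}}_{L^\infty}\norm{\abs{u_0}}_{L^1}$, independent of $\Dx$ and $T$ by Assumption~\ref{def:w4}.

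The hard part is the algebraic manipulation in the energy identity: the natural first-term $r_j^2(\phi_j-\phi_{j-1})$ is of mixed sign, and recognising that $\sum_j \int s^2 \phi'\,ds$ telescopes to zero is precisely what converts it into the sign-definite form required by \eqref{eq:l2extra}. The remaining technical issues --- non-differentiability of $r_j$ at $u_j=0$, and justification of the telescoping via decay of $r_j$ at $\abs{j}\to\infty$ (which follows from summability of $r_j(0)$ and finite speed of propagation of the upwind scheme) --- are standard.
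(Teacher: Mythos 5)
Your proposal is correct and follows essentially the same route as the paper: the scalar inequality $r_j' + \Dm\bigl(r_j\phi(r_j)\bigr)\le 0$ (which the paper derives from its general entropy identity with $\eta(u)=\abs{u}$ --- algebraically the same as your Cauchy--Schwarz step), followed by the exact $\abs{u}^2$ energy identity rearranged so that the flux contribution telescopes and the remainder is the sign-definite term $\int_{r_{j-1}}^{r_j}(r_j^2-s^2)\phi'(s)\,ds$; your polarisation computation reproduces the paper's identity exactly, with your observation that $\sum_j\int_{r_{j-1}}^{r_j}s^2\phi'(s)\,ds$ telescopes to zero playing the role of the paper's telescoping of $\Dm g(r_j)$ with $g(r)=2\int_0^r(s\phi(s)+s^2\phi'(s))\,ds$. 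The only (minor) deviation is the $L^\infty$ bound, where the paper uses a pointwise maximum principle ($r_j\ge r_{j-1}$ together with $f$ non-decreasing forces $dr_j/dt\le 0$) while you use the integrated functional $\Dx\sum_j\max(r_j-M,0)$; both arguments are valid.
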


\begin{proof}
  Let $\eta$ be a differentiable function $\eta:\R^n\to \R$, take the
  inner product of \eqref{eq:discrete} with $\nabla_u\eta(u_j)$ to get
  \begin{multline}
    \label{eq:entropy1}
    \frac{d}{dt} \eta(u_j) + \Dm \left(\phi_j \eta(u_j) \right) \\
    +
    \left[\left(\nabla_u\eta(u_j),u_j\right)-\eta(u_j)\right]\Dm\phi_j
    + \phi_{j-1} \frac{\Dx}{2} d^2_u \eta_{j-1/2}\left(\Dm u_j,\Dm
      u_j\right)=0.
  \end{multline}
  Here $\phi_j=\phi(r_j)$, and $d^2\eta$ denotes the Hessian matrix of
  $\eta$, so that
  \begin{equation*}
    d^2_u\eta_{j-1/2} = d^2_u\eta(u_{j-1/2})
  \end{equation*}
  for some $u_{j-1/2}$ between $u_j$ and $u_{j-1}$. By a limiting
  argument, the function $\eta(u)=\abs{u}$ can be used. If one
  approximates by convex smooth functions, this means that
  \begin{equation}
    \label{eq:rbnd}
    \frac{d}{dt} r_j + \Dm(r_j\phi(r_j) \le 0.
  \end{equation}
  Multiplying by $\Dx$ and summing over $j$ we get
  \begin{equation}
    \label{eq:l1rbnd}
    \norm{r_\Dx(t)}_{L^1(\R)} \le \norm{u_0}_{L^1(\R)}.
  \end{equation}
  Furthermore, if $j$ is such that $r_{j}(t)\ge r_{j-1}(t)$, then
  since $\phi$ is non-decreasing we get $r_j(t) \phi(r_j(t)) \ge
  r_{j-1}(t) \phi(r_{j-1}(t))$ i.e., $\Dm(r_j \phi(r_j)) \ge
  0$. Hence, from \eqref{eq:rbnd}, we see that $dr_j(t)/dt\le 0$. This
  shows that $0\le r_j(t)\le \sup_j \abs{u_j(0)}$. Hence, if
  $\norm{\abs{u_0}}_{L^\infty(\R)}<\infty$, then $r_\Dx$ is bounded
  independently of $t$ and $\Dx$.

  Choosing $\eta(u)=\abs{u}^2$ in \eqref{eq:entropy1} we get
  \begin{equation*}
    \frac{d}{dt} r_j^2(t) + \Dm \left(r_j^2\phi_j\right) + r_j^2 \Dm
    \phi_j + \phi_{j-1}\Dx\abs{\Dm u_j}^2 = 0.
  \end{equation*}
  We have that
  \begin{align*}
    \Dm\left(r_j^2\phi_j\right) + r_j^2\Dm\phi_j &=
    \frac{2}{\Dx}\int_{r_{j-1}}^{r_j}\!\!\! (s\phi(s) + s^2\phi'(s))
    \,ds + \frac{1}{\Dx}\int_{r_{j-1}}^{r_j}\!\!\!
    \left(r_j^2-s^2\right)
    \phi'(s)\,ds\\
    &=\Dm g(r_j) + \frac{1}{\Dx}\int_{r_{j-1}}^{r_j}\!\!\!
    \left(r_j^2-s^2\right) \phi'(s)\,ds,
  \end{align*}
  where
  \begin{equation}\label{eq:gdef}
    g(r)=2\int_0^r (s\phi(s)+s^2\phi'(s)) \,ds.
  \end{equation}
  Using this we find that
  \begin{equation}
    \label{eq:l2bnd}
    \norm{r_\Dx(t)}_{L^2(\R)} \le \norm{\abs{u_0}}_{L^2(\R)},
  \end{equation}
  since, by the assumption that $\phi'\ge 0$,
  \begin{equation*}
    \int_{r_{j-1}}^{r_j}\!\!\! \left(r_j^2-s^2\right)\phi'(s)\,ds \ge 0.
  \end{equation*}
  Hence $\norm{u_\Dx(t)}_{L^2(\R)^n}$ is bounded independently of
  $\Dx$ and $t$. Therefore, there exists a differentiable solution
  $u_\Dx(t)$ to \eqref{eq:discrete} for all $t>0$. Furthermore, we
  have the bound
  \begin{equation*}
    \int_0^T \left( \sum_j \int_{r_{j-1}}^{r_j}\!\!\! \left(r_j^2-s^2\right)\phi'(s)\,ds  
      + \Dx\sum_j\phi_{j-1} \Dx \abs{\Dm u_j}^2 \right) \,dt\le C,
  \end{equation*}
  for some constant $C$ which is independent of $t$ and $\Dx$.

\end{proof}

Now let $\delta$ be a positive constant, and let $e$ be some unit
vector in $\R^n$. Choose
\begin{equation*}
  \eta(u)=\max\seq{\delta\abs{u} - (e,u),0}.
\end{equation*}
and observe that $(\nabla \eta(u),u)-\eta(u)=0$. Furthermore $\eta$ is
convex, so that
\begin{equation*}
  \frac{d}{dt}\eta(u_j) + \Dm(\eta(u_j)\phi_j) \le 0,
\end{equation*}
which implies that
\begin{equation*}
  \sum_j \eta(u_j(t)) \le \sum_j \eta(u_j(0)).
\end{equation*}
We have that $\eta(u)=0$ if and only if $u$ is in the cone
$\Gamma_\delta=\seq{u \st \delta\abs{u}\le (e,u)}$ for some unit
vector $e$. If $\theta$ denotes the angle between $e$ and $u$, then
$u\in \Gamma_\delta$ if $\cos(\theta)\ge \delta$, thus if $\delta<1$
this is a cone in $\R^n$ and this cone is positively invariant for
\eqref{eq:discrete}. Observe that there is no loss of generality in
choosing the coordinates such that $e=(1,\ldots,1)/\sqrt{n}$. If
$1>\delta>\sqrt{(n-1)/n}$, then the invariant cone is in the first
$2^n$-tant in $\R^n$, so that $\u{i}_j(t)\ge 0$ for all $t>0$ if
$u_0\in \Gamma_\delta$. Since $r_j = \abs{u_j}$, it follows that
$\u{i}_j(t)=0$ if and only if $r_j(t)=0$.
% \fbox{Shall we mention
%   for all $i$ here? otherwise true?}  \fbox{for this particular
%   choice of $e$ above, $\delta > \frac{1}{\sqrt{2}}$ will make sure
%   that $u_j^{(i)}(t) >0$ for all $i$}

Therefore, if $u_0\in \seq{ u \st \, \abs{u}\le R}\cap \Gamma_\delta$,
then $u_\Dx(x,t)$ is also in this set. This enables us to deduce the
weak-$*$ convergence of a subsequence (which we do not relabel) of
$\seq{u_\Dx}_{\Dx>0}$.

Let now $\eta_i(r)$ and $q_i(r)$ be given by \eqref{eq:entropies} for
$i=1$, $2$. We then have that
\begin{equation}
  \label{eq:eta1}
  \frac{d}{dt}\eta_1(u_j) + \Dm (q_1(r_j)) + e_{1,j} = 0,
\end{equation}
where
\begin{align*}
  f(r)&=r\phi(r),\ \ \   q_1(r)=f(r)-f(k)\ \ \text{and } \\
  e_{1,j}&= \phi_{j-1} \Dx \left(\Dm u_j\right)^T \frac{1}{r_{j-1/2}}
  \left(I-\frac{u_{j-1/2}\otimes
      u_{j-1/2}}{r_{j-1/2}^2}\right)\left(\Dm u_j\right).
\end{align*}
For any vector $u$, the matrix $u \otimes u$ is defined as $(u \otimes
u)_{ij} = u_iu_j$.  We shall now find an equation satisfied by
$\eta_2$. First observe that
\begin{equation*}
  \frac{d}{dt} r_j + f'(r_j)\Dm r_j - \frac{\Dx}{2}
  f''\left(r_{j-1/2}\right) \left(\Dm r_j\right)^2 + e_{1,j}=0.
\end{equation*}
Multiplying this with $f'(u_j)$ we get
\begin{equation}\label{eq:helpeta}
  \frac{d}{dt} f(r_j) + q_2'(r_j) \Dm r_j 
  - f'(r_j)\frac{\Dx}{2}
  f''\left(r_{j-1/2}\right) \left(\Dm r_j\right)^2 + f'(r_j)e_{1,j}=0.
\end{equation}
Set
\begin{equation*}
  e_{2,j}=\frac{\Dx}{2}
  f''\left(r_{j-1/2}\right) \left(\Dm r_j\right)^2.
\end{equation*}
Then \eqref{eq:helpeta} can be rewritten as
\begin{equation}
  \label{eq:eta2}
  \begin{aligned}
    \frac{d}{dt}\eta_2(r_j) + \Dm q_2(r_j) + \frac{\Dx}{2}
    q_2''(r_{j-1/2}) \left(\Dm r_j\right)^2 - f'(r_j) \left(e_{2,j} -
      e_{1,j}\right) = 0.
  \end{aligned}
\end{equation}
Finally set
\begin{equation*}
  e_{3,j} =  \frac{\Dx}{2}
  q_2''(r_{j-1/2}) \left(\Dm r_j\right)^2,
\end{equation*}
and
\begin{equation*}
  e_{i}(x,t)=e_{i,j}(t)\ \text{ for $x\in (x_{j-1/2},x_{j+1/2}]$ and $i=1,2,3$.}
\end{equation*}
\begin{lemma}\label{lem:ei}
  Assume that \ref{def:w1}, \ref{def:w2} and \ref{def:w4} hold, then
  we have that $e_i\in \CMloc(\R\times [0,T))$ for $i=1$, $2$, $3$.
\end{lemma}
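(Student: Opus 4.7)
The plan is to bound the total mass of each $e_i$ on an arbitrary compact set $K \subset \R \times [0,T)$, uniformly in $\Dx$. The argument splits by sign: $e_1$ is non-negative and admits a direct global bound, while $e_2$ and $e_3$ are signed and reduce to controlling $\sum_j (r_j - r_{j-1})^2$, which requires a local lower bound on $r_\Dx$.

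\emph{The term $e_1$.} Since $u \mapsto \abs{u}$ is convex, its Hessian is positive semi-definite and $e_{1,j} \ge 0$. Specializing \eqref{eq:eta1} to $k=0$ yields $\tfrac{d}{dt} r_j + \Dm(r_j\phi_j) + e_{1,j} = 0$. Multiplying by $\Dx$, summing over $j \in \Z$, and integrating over $[0,T]$ collapses the flux term by telescoping (the boundary contributions vanish thanks to the uniform $L^1$ bound on $r_\Dx$ from Lemma~\ref{lem:rl11}), giving
\[
\int_0^T \!\! \int_\R e_1 \, dx\, dt = \norm{r_\Dx(0)}_{L^1(\R)} - \norm{r_\Dx(T)}_{L^1(\R)} \le \norm{u_0}_{L^1(\R)}.
\]
Thus $e_1$ is a globally bounded positive measure, and in particular $e_1 \in \CMloc(\R\times[0,T))$.

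\emph{The terms $e_2$ and $e_3$.} The identity $q_2'' = 2 f' f''$ gives $e_{3,j} = 2 f'(r_{j-1/2})\, e_{2,j}$, so since $f'$ is bounded (by Assumption~\ref{def:w2}) it is enough to treat $e_2$. Using boundedness of $f''$ and that $\Dx(\Dm r_j)^2 = (r_j - r_{j-1})^2/\Dx$,
\[
\int_K |e_2|\,dx\,dt \;\le\; C \int_0^T\!\! \sum_{j:\, x_j \in K_x} (r_j - r_{j-1})^2\, dt,
\]
where $K_x$ is the spatial projection of $K$. The reverse triangle inequality $\abs{r_j - r_{j-1}} \le \abs{u_j - u_{j-1}}$ together with the dissipation estimate \eqref{eq:l2extra} yields $\int_0^T \sum_j \phi_{j-1}(r_j - r_{j-1})^2 \, dt \le C$. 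To remove the weight $\phi_{j-1}$ on $K$, I would invoke Assumption~\ref{def:w4}: the lower bound $C_K > 0$ on $\abs{u_0}$ (on the relevant portion of $(-\infty,K]$) propagates through the upwind scheme---whose characteristic speed $\phi(r)$ is strictly positive by Assumption~\ref{def:w1}---to produce $r_\Dx(x,t) \ge c_K > 0$ on $K$. Hence $\phi_{j-1} \ge \phi(c_K) > 0$ there, and the bound on $\int_K \abs{e_2}\,dx\,dt$ follows.

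\emph{Main obstacle.} The delicate step is propagating the strict lower bound on $r_\Dx$ from $t=0$ to positive times. Since $r_j$ only satisfies the inequality \eqref{eq:rbnd} and not an equality, the standard discrete minimum principle for upwind schemes does not apply verbatim. Assumption~\ref{def:w4}, via the sets $\mathcal{A}(\abs{u_0},K)$ and $\mathcal{B}(K)$, is tailored precisely to rule out initial dips of $\abs{u_0}$ toward zero that would, under the scheme's numerical dissipation, propagate to a vanishing $r_\Dx$ inside $K$; making this propagation rigorous is the principal technical input on which the present lemma rests.
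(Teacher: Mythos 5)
Your proposal is correct and follows essentially the same route as the paper: $e_1$ is handled by its sign and the telescoping $L^1$ identity coming from \eqref{eq:eta1} with $k=0$, while $e_2$ and $e_3$ are reduced (via the local boundedness of $f'$, $f''$ and the bound $\abs{\Dm r_j}\le\abs{\Dm u_j}$) to the dissipation estimate \eqref{eq:l2extra}. The only difference is one of care rather than of method: where the paper removes the weight $\phi_{j-1}$ from \eqref{eq:l2extra} with the single phrase ``since $\phi(r)>0$'', you correctly observe that because $\phi(0)=0$ this step actually requires a positive lower bound on $r_\Dx$ propagated from Assumption~\ref{def:w4} (the content of Lemma~\ref{lem:new}), which is precisely why that assumption appears in the hypotheses.
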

\begin{proof}
  Set $\Omega=\R \times [0,T)$, and let $\psi$ be a test function in
  $C_0(\Omega)$. Note that from \eqref{eq:l1rbnd} and \eqref{eq:eta1}
  it follows that
  \begin{equation*}
    \iint_\Omega e_1(x,t)\,dxdt \le C,
  \end{equation*}
  where the constant $C$ does not depend on $\Dx$ or $T$. Since
  $e_1\ge 0$, this means that
  \begin{equation*}
    \abs{\langle e_1,\psi\rangle} \le \iint_\Omega \abs{\psi}e_1
    \,dxdt \le C\norm{\psi}_{L^\infty(\Omega)}, 
  \end{equation*}
  and thus $e_1\in \CMloc(\Omega)$. To show the same for $e_2$ and
  $e_3$ observe that
  \begin{equation*}
    \abs{\Dm r_j}\le \abs{\Dm u_j}.
  \end{equation*}
  Since $\phi(r)>0$, \eqref{eq:l2extra} implies that
  \begin{equation*}
    \int_0^T \Dx\sum_j \Dx\abs{\Dm u_j}^2 \,dt\le C,
  \end{equation*}
  for some constant $C$ which is independent of $\Dx$ and $T$. We also
  have that $f'$ and $f''$ are locally bounded, and $r_\Dx$ is
  bounded, this means that, for $i=2,3$,
  \begin{equation*}
    \iint_\Omega e_i(x,t)\,dxdt \le C \int_0^T \Dx\sum_j \Dx \left(\Dm
      r_j\right)^2 \,dt \le  \int_0^T \Dx\sum_j \Dx\abs{\Dm u_j}^2 \,dt \le C.
  \end{equation*}
  Thus also $e_2$ and $e_3$ are in $\CMloc(\Omega)$.
\end{proof}
Observe that, Lemma~\ref{lem:ei} implies that also
$f'(r_j)(e_{1,j}-e_{2,j})$ is in $\CMloc(\Omega)$.

\begin{lemma}
  \label{lem:compact}
  Assume that \ref{def:w1}, \ref{def:w2} and \ref{def:w4} hold, 
  let $u_{\Dx}$ be generated by the scheme \eqref{eq:discrete} and
  set $r_{\Dx} = \abs{u_{\Dx}}$. Then
  \begin{align*}
    \seq{\eta_i(r_\Dx)_t + q_i(r_\Dx)}_{\Dx>0} \,\, \text{is compact
      in} \,\, H^{-1}_{\loc},
  \end{align*}
  where $\eta_i$ and $q_i$ are given by \eqref{eq:entropies} for $i
  =1,2$.
\end{lemma}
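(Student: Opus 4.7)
The plan is to verify the hypotheses of Murat's lemma (Lemma \ref{lem:Murat}) by splitting
\begin{equation*}
  \CL_{i,\Dx}\;:=\;\eta_i(r_\Dx)_t + q_i(r_\Dx)_x
\end{equation*}
on $\Omega:=\R\times(0,T)$ into a $\CMloc(\Omega)$-bounded piece and a piece that vanishes in $H^{-1}_\loc(\Omega)$. The $W^{-1,\infty}(\Omega)$ bound required by the lemma is immediate, since $\eta_i(r_\Dx)$ and $q_i(r_\Dx)$ are uniformly bounded in $L^\infty(\Omega)$ by continuity of $\eta_i,q_i$ and the $L^\infty$ estimate on $r_\Dx$ from Lemma \ref{lem:rl11}.

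First I would translate the discrete identities \eqref{eq:eta1}--\eqref{eq:eta2} into equalities of piecewise-constant-in-$x$ functions on $\Omega$:
\begin{equation*}
  \partial_t \eta_i(r_\Dx) + G_{i,\Dx} + \mathcal{E}_{i,\Dx} = 0,
\end{equation*}
where $G_{i,\Dx}(x,t):=\sum_j D_- q_i(r_j)(t)\,\mathds{1}_{I_j}(x)$ is the piecewise-constant realization of the discrete flux-difference and $\mathcal{E}_{i,\Dx}$ is the piecewise-constant realization of $e_{1,j}$ when $i=1$, or of $e_{3,j}-f'(r_j)(e_{2,j}-e_{1,j})$ when $i=2$. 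Lemma \ref{lem:ei} (together with its final remark) gives $\seq{\mathcal{E}_{i,\Dx}}_{\Dx>0}$ bounded in $\CMloc(\Omega)$; this will serve as the measure part in Murat's decomposition.

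The remaining discrepancy is between $G_{i,\Dx}$ and the true distributional derivative $q_i(r_\Dx)_x=\sum_j(q_i(r_j)-q_i(r_{j-1}))\,\delta_{x_{j-1/2}}$. To control it, I would introduce the sawtooth function $\varphi_j(x):=(x-x_{j+1/2})\mathds{1}_{I_j}(x)$, which satisfies $\partial_x\varphi_j=\mathds{1}_{I_j}-\Dx\,\delta_{x_{j-1/2}}$, and set $\Phi_{i,\Dx}:=\sum_j D_-q_i(r_j)\,\varphi_j$. Then $\partial_x\Phi_{i,\Dx}=G_{i,\Dx}-q_i(r_\Dx)_x$, so that
\begin{equation*}
  \CL_{i,\Dx}\;=\;-\mathcal{E}_{i,\Dx}\;-\;\partial_x\Phi_{i,\Dx}.
\end{equation*}
Since $\abs{\varphi_j}\le\Dx$ with pairwise disjoint supports and $\abs{D_-q_i(r_j)}\le C\abs{D_-r_j}\le C\abs{D_- u_j}$ on the bounded range of $r_\Dx$, a direct computation gives
\begin{equation*}
  \norm{\Phi_{i,\Dx}}_{L^2(\Omega)}^2 \;\le\; C\Dx^2\int_0^T\Dx\sum_j\abs{D_-u_j}^2\,dt.
\end{equation*}
Combining \eqref{eq:l2extra} with a uniform lower bound $\phi_{j-1}\ge\phi_{\min}>0$ (inherited from the positivity assumption \ref{def:w4} on $\abs{u_0}$ and the invariance of the cone $\Gamma_\delta$ along the discrete flow), the right-hand side is $O(\Dx)$. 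Thus $\Phi_{i,\Dx}\to 0$ strongly in $L^2(\Omega)$, and consequently $\partial_x\Phi_{i,\Dx}\to 0$ in $H^{-1}_\loc(\Omega)$, placing it in a compact subset.

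Feeding these two ingredients into Murat's lemma produces the claimed $H^{-1}_\loc$ compactness of $\seq{\CL_{i,\Dx}}_{\Dx>0}$. The main technical obstacle is the discrepancy estimate: picking the correct antiderivative $\Phi_{i,\Dx}$ and, above all, converting the $\phi$-weighted estimate \eqref{eq:l2extra} into an unweighted $L^2$-bound on $D_-u_j$ — which rests on propagating the lower bound on $\phi$ from the initial data hypothesis \ref{def:w4} through the discrete evolution.
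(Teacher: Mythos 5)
Your proposal is correct and follows essentially the same route as the paper: both verify Murat's lemma (Lemma~\ref{lem:Murat}) with the identical decomposition into a $\CMloc(\Omega)$ part supplied by the discrete identities \eqref{eq:eta1}--\eqref{eq:eta2} together with Lemma~\ref{lem:ei}, and a discrepancy part whose smallness comes from the weak BV estimate \eqref{eq:l2extra} (and, in both your argument and the paper's, from the tacitly assumed positive lower bound on $\phi_{j-1}$, which you at least flag explicitly). The only difference is cosmetic: you realize the discrepancy between $\Dm q_i(r_j)$ and the distributional derivative $q_i(r_\Dx)_x$ as $\partial_x$ of an explicit sawtooth potential of $L^2$-norm $O(\sqrt{\Dx})$, whereas the paper bounds its pairing with $H^1$ test functions directly by $C\sqrt{\Dx}\norm{\psi}_{H^1(\Omega)}$ -- two equivalent ways of exhibiting the same $H^{-1}$ compactness.
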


\begin{proof}

  Let $i=1$ or $i=2$, and $\psi$ is a test function in
  $H^1_{\mathrm{loc}}(\Omega)$. we define
  \begin{align*}
    \langle \mathcal{L}_{i}, \psi \rangle & = \langle \eta_i(r_\Dx)_t + q_i(r_{\Dx})_x , \psi\rangle \\
    % &= \int_0^T \int_\R \left( \eta_i(r_\Dx)_t +
    %   q_i(r_{\Dx})_x\right)
    % \psi(x,t)\,dxdt\\
    &= \int_0^T \left(\sum_j \jint \left(\frac{d}{dt}\eta_i(r_j)
        \psi(x,t) -
        q_i(r_j) \psi_x(x,t) \right) \,dx \right) \,dt\\
    &= \int_0^T \left(\sum_j \left( \jint \frac{d}{dt}(\eta_i(r_j)
        \psi(x,t) \,dx -
        q_i(r_j)\, \Dx\Dm \psi(x_{j+1/2},t) \right) \right)\,dt\\
    &= \int_0^T \left( \sum_j \jint \left(\frac{d}{dt}\eta_i(r_j)
        \psi(x,t) +
        \Dm q_i(r_j) \psi(x_{j-1/2},t) \right)\,dx \right) \,dt\\
    &=\int_0^T \left(\sum_j \jint \left(\frac{d}{dt}\eta_i(r_j) + \Dm
        q_i(r_j)
      \right)\psi(x,t)\,dx \right) \,dt \\
    &\qquad + \int_0^T \left( \sum_j \jint
      \left(\psi(x_{j-1/2},t)-\psi(x,t)\right)\Dm q_i(r_j) \,dx \right) \,dt\\
    &=: \langle \mathcal{L}_{i,1},\psi\rangle + \langle
    \mathcal{L}_{2,i},\psi\rangle.
  \end{align*}
  By \eqref{eq:eta1}, \eqref{eq:eta2} and Lemma~\ref{lem:ei} we know
  that $\mathcal{L}_{i,1}\in \CMloc(\Omega)$. Regarding
  $\mathcal{L}_{i,2}$ we have
  \begin{align*}
    & \abs{\langle \mathcal{L}_{2,i},\psi\rangle} = \Bigl| \int_0^T
    \left(\sum_j \jint \int_{x_{j-1/2}}^x \psi_x(y,t)\,dy\, \Dm
      q_i(r_j)\,dx \right) \,dt \Bigr|\\
    &\le \int_0^T \left( \sum_j \jint \sqrt{x-x_{j-1/2}} \Bigl(
      \int_{x_{j-1/2}}^x \left(\psi_x(y,t)\right)^2\,dy\Bigr)^{1/2}
      \abs{\Dm q_i(r_j)} \,dx \right) \,dt \\
    &\le \int_0^T \left(\sum_j \Dx^{3/2} \Bigl( \jint
      \left(\psi_x(x,t)\right)^2\,dx\Bigr)^{1/2}
      \norm{q_i'}_{L^\infty} \abs{\Dm r_j} \right) \,dt \\
    &\le \norm{q_i'}_{L^\infty} \int_0^T \left(\Bigl( \sum_j \Dx \jint
      \left(\psi_x(x,t)\right)^2\,dx \Bigr)^{1/2}
      \Bigl(\Dx^2 \sum_j \left(\Dm r_j\right)^2\Bigr)^{1/2} \right)\,dt\\
    &\le \norm{q_i'}_{L^\infty} \sqrt{\Dx} \Bigl(\iint_\Omega
    \left(\psi_x(x,t)\right)^2\,dxdt \Bigr)^{1/2}
    \Bigl(\int_0^T \Dx\sum_j\Dx \left(\Dm r_j\right)^2 \,dt \Bigr)^{1/2}\\
    &\le C \sqrt{\Dx} \norm{\psi}_{H^1(\Omega)}.
  \end{align*}
  Therefore the above estimate shows that $\mathcal{L}_{2,i}$ is
  compact in $H^{-1}(\Omega)$. By Lemma~\ref{lem:Murat}, we conclude
  the sequence $\seq{\eta_i(r_\Dx)_t + q_i(r_\Dx)}_{\Dx>0}$ is compact
  in $H^{-1}_{\mathrm{loc}}(\Omega)$.

\end{proof}

\begin{lemma}
  \label{lem:rconverg1} Assume that \ref{def:w1} -- \ref{def:w4}
  hold, then there is a subsequence of $\seq{\Dx}$ (not relabeled)
  and a function $r$ such that $r_{\Dx} \to r$ a.e.~$(x,t)\in
  \Omega$. We have that $r\in L^{\infty}([0,T];L^1(\R))$.
  Furthermore, $r$ satisfies
  \begin{equation*}
    \begin{cases}
      r_t + f(r)_x \le 0, &x\in \R,\  t>0,\\
      r=\abs{u_0}, &x\in \R, \ t=0,
    \end{cases}
  \end{equation*}
  in the distributional sense.
\end{lemma}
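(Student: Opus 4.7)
The plan is to apply the compensated compactness result (Theorem~\ref{thm:compcomp}) with $f(r)=r\phi(r)$ to the sequence $\seq{r_\Dx}_{\Dx>0}$. From Lemma~\ref{lem:rl11} we already have the uniform $L^\infty$ bound $\abs{r_\Dx}\le \norm{\abs{u_0}}_{L^\infty(\R)}=:M$, so the sequence is uniformly bounded and thus admits a weak-$\star$ limit $r$ in $L^\infty(\Omega)$. Lemma~\ref{lem:compact} supplies the $H^{-1}_{\loc}$ compactness of $\eta_i(r_\Dx)_t+q_i(r_\Dx)_x$ for $i=1,2$. Finally, since $f'(r)=\phi(r)+r\phi'(r)$ and $f''(r)=2\phi'(r)+r\phi''(r)$, the nondegeneracy hypothesis $\mathrm{meas}\{r\st f''(r)=0\}=0$ is exactly Assumption~\ref{def:w3}. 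Therefore Theorem~\ref{thm:compcomp} applies and, along a subsequence (not relabeled), $r_\Dx\to r$ almost everywhere in $\Omega$.

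Next I would upgrade the $L^1$ bound. Lemma~\ref{lem:rl11} gives $\norm{r_\Dx(t)}_{L^1(\R)}\le\norm{r_\Dx(0)}_{L^1(\R)}\le \norm{\abs{u_0}}_{L^1(\R)}$ uniformly in $t$ and $\Dx$; since $r_\Dx(t)\to r(t)$ a.e.\ after passing to the subsequence, Fatou's lemma yields $r(\cdot,t)\in L^1(\R)$ for a.e.\ $t\in[0,T]$ with $\norm{r(t)}_{L^1(\R)}\le \norm{\abs{u_0}}_{L^1(\R)}$, i.e.\ $r\in L^\infty([0,T];L^1(\R))$.

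To obtain the distributional inequality, I would start from \eqref{eq:rbnd} in the sharper integrated form
\begin{equation*}
\frac{d}{dt} r_j + \Dm(r_j\phi(r_j))\le 0,
\end{equation*}
multiply by a nonnegative test function $\psi\in C_c^\infty(\R\times[0,T))$ evaluated at $(x_j,t)$, multiply by $\Dx$, sum over $j$, and integrate in $t$. Integrating by parts in time (which produces the boundary term $-\Dx\sum_j r_j(0)\psi(x_j,0)$ since $\psi$ vanishes at $t=T$) and applying discrete summation by parts in space to move $\Dm$ onto $\psi$ converts the inequality to
\begin{equation*}
-\iint_\Omega r_\Dx\,\psi_t\,dx\,dt - \iint_\Omega r_\Dx\phi(r_\Dx)\,\Dp\psi\,dx\,dt - \int_\R r_\Dx(x,0)\psi(x,0)\,dx \le o(1),
\end{equation*}
where the $o(1)$ absorbs the standard quadrature discrepancies between piecewise-constant evaluations and the exact integrals (of order $\Dx$, since $\psi$ is smooth). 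Using the a.e.\ convergence $r_\Dx\to r$ together with the uniform $L^\infty$ bound and the dominated convergence theorem, the volume terms pass to their natural limits. The initial-data term converges because $r_\Dx(\cdot,0)=\abs{u_\Dx(\cdot,0)}$ is the piecewise-constant averaging of $\abs{u_0}$, hence converges to $\abs{u_0}$ in $L^1_\loc(\R)$. The limit inequality
\begin{equation*}
\iint_\Omega \left(r\psi_t + r\phi(r)\psi_x\right)\,dx\,dt + \int_\R \abs{u_0(x)}\psi(x,0)\,dx \ge 0
\end{equation*}
for every $\psi\ge 0$ in $C_c^\infty(\R\times[0,T))$ is the stated distributional inequality with the prescribed initial datum.

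I do not anticipate a significant obstacle, since the heavy lifting has been done in Lemma~\ref{lem:rl11}, Lemma~\ref{lem:ei} and Lemma~\ref{lem:compact}; the only care required is in the discrete-to-continuous passage, particularly in verifying that the approximation $\Dp\psi(x_j,t)\to\psi_x(x,t)$ uniformly on the support of $\psi$ and controlling the initial-layer term. Passing from the pointwise inequality at the lattice nodes to the distributional inequality preserves the sign because the test function is nonnegative and all estimates are linear or monotone in $\psi$.
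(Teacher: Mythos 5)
Your proposal is correct and follows essentially the same route as the paper: strong convergence of $r_\Dx$ via Theorem~\ref{thm:compcomp} together with Lemma~\ref{lem:compact} (with the nondegeneracy condition $f''=2\phi'+r\phi''\ne 0$ a.e.\ supplied by Assumption~\ref{def:w3}), and the subsolution inequality obtained by multiplying \eqref{eq:rbnd} by a nonnegative test function, summing by parts, and controlling the quadrature error as $\Dx\to 0$. The only difference is that you spell out the Fatou argument for $r\in L^\infty([0,T];L^1(\R))$ and the convergence of the initial-data term, which the paper leaves implicit.
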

\begin{proof}
  The strong convergence of $r_\Dx$ follows from the compensated
  compactness theorem, Theorem~\ref{thm:compcomp} and the compactness
  of $\seq{\eta_i(r_\Dx)_t + q_i(r_\Dx)_x}_{\Dx>0}$ for $i=1,2,$ i.e.,
  Lemma ~\ref{lem:compact}.

  % To show the $L^1$ continuity, first note that $t\mapsto
  % \norm{r_\Dx(\cdot,t)}_{L^1(\R)}$ is a non-increasing
  % function. Thus for $0\le s \le t\le T$
  % \begin{align*}
  %   \norm{r_\Dx(\cdot,t)-r_\Dx(\cdot,s)}_{L^1(\R)} &=
  %   \int_\R r_\Dx(x,s)-r_\Dx(x,t)\,dx \\
  %   &=-\int_\R \int_s^t \partial_t r_\Dx (x,\tau) \,d\tau\,dx\\
  %   &\le \int_s^t \int_\R e_1(x,\tau) \,dx\,d\tau \end{align*} Now,
  % $e_1$ is in $L^1(\Omega)$, with norm bounded independently of
  % $\Dx$. Thus $t\mapsto r_{\Dx}(\cdot,t)$ is in $C([0,T];L^1(\R))$
  % with a modulus of continuity bounded independently of $\Dx$.  Then
  % \begin{align*}
  %   \norm{r(\cdot,t)-r(\cdot,s)}_{L^1(\R)} &\le
  %   \norm{r(\cdot,t)-r_\Dx(\cdot,t)}_{L^1(\R)} +
  %   \norm{r_\Dx(\cdot,t)-r_\Dx(\cdot,s)}_{L^1(\R)}\\
  %   &\qquad \qquad + \norm{r_\Dx(\cdot,s)-r(\cdot,s)}_{L^1(\R)}.
  % \end{align*}
  % The first and the last term above can be made arbitrarily small by
  % choosing $\Dx$ small, and the middle term is small if $s$ is close
  % to $t$.  Hence $t\mapsto \norm{r(\cdot,t)}_{L^1(\R)}$ is
  % continuous.

  To see that $r$ is a distributional subsolution of the conservation
  law \eqref{eq:req}, multiply \eqref{eq:rbnd} with a non-negative
  test function $\psi$ and integrate over $x$ and $t$ to obtain
  \begin{multline*}
    \int_0^T\int_\R r_\Dx \psi_t + f(r_\Dx)\psi_x \,dxdt +
    \int_\R r_\Dx(0,x)\psi(0,x)\,dx \\
    \ge \int_0^T \sum_j f(r_j) \frac{1}{\Dx} \jint \int_x^{x+\Dx}
    \left(\psi_x(x,t)-\psi_x(z,t)\right)\,dz \,dx\,dt.
  \end{multline*}
  The term on the right tends to $0$ as $\Dx\to 0$, which shows that
  $r$ is a subsolution.
\end{proof}
Let now the vector $\tau_j$ be defined as 
\begin{equation*}
  \tau_j = \frac{u_j}{(u_j,e)}, \ \ 
  e=\frac1{\sqrt{n}} (1,\ldots,1), 
\end{equation*}
if $u_j\ne 0$. If $u_j=0$ set $\tau_j =
\tau_{j+1}$. Observe that this makes sense since $\u{i}_j=0$
only if $r_j=0$. Indeed, we have
\begin{equation*}
  \frac{d}{dt} \u{i}_j(t) + \u{i}_{j}(t)\Dm \phi_j + \phi_{j-1}(t)\Dm \u{i}_j
  =0.
\end{equation*}
If $\u{i}_j(t)>0$ for $t<t_0$ and $\u{i}_j(t_0)=0$ then
$d\u{i}_j/dt(t_0)\le 0$. If $\u{i}_{j-1}(t_0)>0$ then $d\u{i}_j/dt
(t_0)>0$, which is a contradiction. Thus if $r_{j_0}(t_0)=0$, then
$r_j(t)=0$ for all $j<j_0$ and $t>t_0$.  Thus the definition of
$\tau^{(i)}_j$ makes sense.

First note that
\begin{equation*}
  \Dm \tau_j = \frac{\left(\Dm u_j\right)\left(u_j,e\right)
    -
    u_j\left(\Dm u_j,e\right)}{\left(u_j,e\right)\left(u_{j-1},e\right)}.
\end{equation*}
Using this, we find
\begin{align*}
  \frac{d}{dt}\tau_j &= \frac{\left(du_j/dt -
      (du_j/dt,e)\right) \left(u_j,e\right) -
    \left(u_j-\left(u_j,e\right)\right)\left(du_j/dt,e\right)}
  {\left(u_j,e\right)^2}\\
  &=-\frac{\Dm\left(u_j\phi_j\right)\left(u_j,e\right) -
    u_j\left(\Dm\left(u_j\phi\right),e\right)}
  {\left(u_j,e\right)^2}\\
  &=-\phi_{j-1} \frac{\left(\Dm u_j\right)\left(u_j,e\right) -
    u_j\left(\Dm u_j,e\right)} {\left(u_j,e\right)^2}\\
  &=-\phi_{j-1}\frac{\left(u_{j-1},e\right)}{\left(u_j,e\right)}
  \Dm\tau_j.
\end{align*}
Set
\begin{equation*}
  \lambda_j =
  \begin{cases}
    \phi_{j-1}\frac{\left(u_{j-1},e\right)}{\left(u_j,e\right)}
    &\text{if $r_j>0$,}\\
    \lambda_{j+1} &\text{if $r_j=0$.}
    % \fbox{we can define as well $0$
%       instead of $\lambda_{j+1}$?}
  \end{cases}
\end{equation*}
Now $\tau_j$ satisfies
\begin{equation}
  \label{eq:taueq}
  \frac{d}{dt} \tau_j + \lambda_j \Dm \tau_j = 0.
\end{equation}
Next, we define for any constant $J>0$ the set
\begin{align*}
\mathcal{M}(J;t) := \lbrace  j \, |\, r_j(t) < r_{j-1}(t) \,\text{and} \, j\Dx \le J \rbrace ,
\end{align*}
and
\begin{align*}
\Gamma_{J}(t) := \min_{j \in \mathcal{M}(J;t)} r_j(t).
\end{align*}
\begin{lemma}
\label{lem:new} 
Assume that the assumption \ref{def:w4} holds. Then we have that $\Gamma_{J}(t) \ge \delta \Gamma_{J}(0) \ge  \delta C_J >0$.
 \end{lemma}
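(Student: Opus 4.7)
The strategy rests on two observations. First, the cone invariance (Assumption~\ref{def:w5}, propagated by \eqref{eq:discrete} via the entropy $\eta(u)=\max\{\delta|u|-(e,u),0\}$ established earlier in this section) gives the two-sided bound $\delta\,r_j(t)\le w_j(t)\le r_j(t)$ for all $j$ and $t\ge 0$, where $w_j:=(u_j,e)$; so it suffices to find a lower bound on the auxiliary scalar $w_j$ and the factor $\delta$ will appear naturally when returning to $r_j$. Second, taking the inner product of \eqref{eq:discrete} with $e$ shows that $w_j$ satisfies the exact scalar upwind scheme
\begin{equation*}
  \frac{d}{dt}w_j + \Dm(\phi_j w_j) = 0,
\end{equation*}
with non-negative wave speeds $\phi_j=\phi(r_j)$ by Assumption~\ref{def:w1}.

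I would first verify the initial bound $\Gamma_J(0)\ge C_J$ from Assumption~\ref{def:w4}: at any $j$ with $r_j(0)<r_{j-1}(0)$ and $j\Dx\le J$, the cell $I_j$ meets the left-decrease set $\mathcal{A}(|u_0|,J)$, so averaging the pointwise bound $|u_0|\ge C_J$ over $I_j$ gives $r_j(0)\ge C_J$. Next, for the dynamical bound $\Gamma_J(t)\ge\delta\,\Gamma_J(0)$, I would exploit that at any decrease point $j\in\mathcal{M}(J;t)$ one has $\phi_{j-1}\ge\phi_j$ (since $\phi$ is non-decreasing and $r_{j-1}>r_j$), so the upwind identity
\begin{equation*}
  \frac{d}{dt}w_j=\frac{1}{\Dx}\bigl(\phi_{j-1}w_{j-1}-\phi_j w_j\bigr)
\end{equation*}
together with $w_{j-1}\ge\delta r_{j-1}$ and $w_j\le r_j$ tends to drive $w_j$ upward as soon as it falls much below $w_{j-1}$. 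Combining this monotonicity with the initial estimate $w_k(0)\ge \delta r_k(0)\ge \delta C_J$ for $k\in\mathcal{M}(J;0)$ should yield $w_j(t)\ge \delta\,\Gamma_J(0)$ at every $j\in\mathcal{M}(J;t)$, and hence $r_j(t)\ge w_j(t)\ge \delta\,\Gamma_J(0)$ as required.

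The main obstacle is that the set $\mathcal{M}(J;t)$ is time-dependent, and the coupling $\phi_j=\phi(r_j)$ links the dynamics of $w$ and $r$, so one cannot simply integrate a fixed-index monotonicity: indices enter and leave $\mathcal{M}(J;\cdot)$ as the solution evolves, and at those transition instants the minimizer of $w_k$ over $\mathcal{M}(J;\cdot)$ can switch. The delicate part of the argument is therefore an envelope / comparison argument that tracks $\min_{k\in\mathcal{M}(J;t)} w_k(t)$ across such transitions, relying on continuity of $r_j(\cdot),w_j(\cdot)$ in $t$ and the discrete maximum principle for the scalar upwind scheme satisfied by $w$. Once this bookkeeping is in place, no further estimates are needed.
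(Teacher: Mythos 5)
Your plan is built on the same mechanism as the paper's proof: take the inner product of \eqref{eq:discrete} with $e$ so that $w_j=(u_j,e)$ satisfies $\frac{d}{dt}w_j=-w_j\Dm\phi_j-\phi_{j-1}\Dm w_j$, observe that at a local minimum the right-hand side is non-negative, and close with the chain $r_j(t)\ge (u_j(t),e)\ge (u_j(0),e)\ge\delta r_j(0)\ge\delta\Gamma_J(0)\ge\delta C_J$. Two points, though. First, the sign argument should not be routed through $w_{j-1}\ge\delta r_{j-1}$ and $w_j\le r_j$: that combination only yields $\frac{d}{dt}w_j\ge(\delta\phi_{j-1}r_{j-1}-\phi_j r_j)/\Dx$, which need not have a sign. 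The clean observation is that $r_j<r_{j-1}$ together with $\phi'\ge 0$ gives $\Dm\phi_j\le 0$ (and $w_j\ge 0$ in the cone), while $w_j<w_{j-1}$ gives $\Dm w_j<0$, so both terms of $-w_j\Dm\phi_j-\phi_{j-1}\Dm w_j$ are separately non-negative. Second, the ``delicate envelope argument'' that you defer is precisely where the paper closes the proof with a one-line remark: the same sign computation shows that a cell cannot \emph{become} a local minimum, i.e.\ $t\mapsto\mathcal{M}(J;t)$ is non-increasing, so every $j\in\mathcal{M}(J;t)$ already lies in $\mathcal{M}(J;0)$, hence $r_j(0)\ge\Gamma_J(0)$, and a fixed-index monotonicity argument suffices; no tracking of the minimizer across transitions of $\mathcal{M}(J;\cdot)$ is needed. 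As written, your proposal names this step but does not supply it, and it is the only substantive content beyond the sign computation, so the plan remains incomplete until you either prove the monotonicity of $\mathcal{M}(J;\cdot)$ or actually carry out the envelope argument you describe.
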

 \begin{proof}
First let $j \in  \mathcal{M}(J;t)$. In particular this means that $r_j(t) < r_{j-1}(t)$. Using scheme \eqref{eq:discrete}, we have
 \begin{align*}
 \frac{d}{dt} (u_j, e) & = -\left( \Dm(u_j \phi_j), e \right) \\
 &= - \Dm \phi_j (u_j, e) - \phi_{j-1} (\Dm u_j, e).
 \end{align*}
 Therefore, if $(u_j,e)$ is a local minima, i.e., if $(u_j, e) <
 (u_{j-1}, e)$ then $ \frac{d}{dt} (u_j, e) \ge 0$.  This concludes
 the proof 
 since $\delta C_J \le \delta \Gamma_{J}(0) \le \delta r_j(0) \le
 (u_j(0), e) \le (u_j(t), e) \le r_j(t)$. Observe that this proof also implies that the set $\mathcal{M}(J;t)$ can't increase in time.
 \end{proof}

\begin{lemma}
  If the assumptions \ref{def:w1} -- \ref{def:w5} hold, and if 
  \begin{equation}\label{eq:tauinbv}
    \abs{\tau^{(i)}_\Dx(\cdot,0)}_{B.V.(\R)} \le C, \ \ i=1,\ldots,n,
  \end{equation}
  for some constant $C$ which is independent of $\Dx$, then there is a
  subsequence of $\seq{\Dx}$ (not relabeled) and functions
  $\tau^{(i)}$ in $C([0,T];L^1_\loc(\R)$ such that
  $\tau^{(i)}_\Dx(\cdot,t)\to \tau^{(i)}(\cdot,t)$ in $L^1_\loc(\R)$
  for $i=1,\ldots,n$.
\end{lemma}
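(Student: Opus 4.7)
The plan is to apply Kolmogorov's compactness lemma (Lemma~\ref{lem:kolmogorov}) componentwise to the sequences $\{\tau_\Dx^{(i)}\}$, which requires three estimates uniform in $\Dx$: an $L^\infty$ bound, a spatial modulus of continuity, and a time modulus of continuity. The $L^\infty$ bound is essentially free: since $u_0\in \Gamma_\delta$ and $\Gamma_\delta$ is positively invariant under \eqref{eq:discrete} (as established earlier in this section), we have $\delta\abs{u_j(t)}\le (u_j(t),e)$ for all $j$ and $t$, so $\abs{\tau_j^{(i)}(t)} = \abs{\u{i}_j(t)}/(u_j(t),e) \le 1/\delta$.

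For the spatial modulus I would work directly from \eqref{eq:taueq}. Writing $v_j = \tau_j^{(i)} - \tau_{j-1}^{(i)}$ and differentiating produces
\begin{equation*}
  \frac{d}{dt} v_j = -\frac{\lambda_j}{\Dx}v_j + \frac{\lambda_{j-1}}{\Dx}v_{j-1}.
\end{equation*}
Since $\lambda_j\ge 0$, Kato's inequality gives $\frac{d}{dt}\abs{v_j}\le -\frac{\lambda_j}{\Dx}\abs{v_j} + \frac{\lambda_{j-1}}{\Dx}\abs{v_{j-1}}$; summing in $j$ and reindexing yields $\frac{d}{dt}\sum_j \abs{v_j}\le 0$. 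The spatial BV seminorm of $\tau_\Dx^{(i)}$ is therefore non-increasing in time and, by hypothesis \eqref{eq:tauinbv}, uniformly bounded.

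For the time modulus on a compact set $B\subset [-J,J]$, I would integrate \eqref{eq:taueq} in time and estimate
\begin{equation*}
  \Dx\sum_{j\Dx\in B}\abs{\tau_j^{(i)}(t) - \tau_j^{(i)}(s)} \le \int_s^t \sum_{j\Dx\in B}\lambda_j \abs{\tau_j^{(i)} - \tau_{j-1}^{(i)}}\,d\xi,
\end{equation*}
which, combined with the spatial BV bound, reduces the task to producing a uniform bound $\sup_{j\Dx\in B}\lambda_j \le \Lambda_J$. Writing $\lambda_j = \phi_{j-1}(u_{j-1},e)/(u_j,e)\le \norm{\phi}_{L^\infty}\,r_{j-1}/(\delta r_j)$, this is trivially at most $\norm{\phi}_{L^\infty}/\delta$ when $r_{j-1}\le r_j$; in the opposite case $j\in \mathcal{M}(J;t)$, so Lemma~\ref{lem:new} (exploiting \ref{def:w4}) yields $r_j\ge \delta C_J$ and hence $\lambda_j\le \norm{\phi}_{L^\infty}\norm{r_\Dx}_{L^\infty(\R)}/(\delta^2 C_J)$.

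With all three estimates in hand, Lemma~\ref{lem:kolmogorov} extracts a subsequence along which $\tau_\Dx^{(i)}\to \tau^{(i)}$ in $C([0,T];L^1_\loc(\R))$, and a standard diagonal argument produces a single subsequence valid for all $i=1,\ldots,n$. The main obstacle is clearly the time-modulus step: the transport speed $\lambda_j$ in \eqref{eq:taueq} can blow up in regions where $(u_j,e)$ degenerates, and controlling this is precisely the role of the technical assumption \ref{def:w4} together with its dynamical propagation supplied by Lemma~\ref{lem:new}.
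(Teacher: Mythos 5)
Your argument is correct and follows the paper's own proof essentially step for step: the same spatial BV estimate derived from the transport structure of \eqref{eq:taueq} (you apply Kato's inequality to $\tau^{(i)}_j-\tau^{(i)}_{j-1}$ where the paper smooths the absolute value by $\eta_\alpha$, an immaterial difference), the same uniform bound on $\lambda_j$ over compact intervals via Lemma~\ref{lem:new} and assumption \ref{def:w4}, the same $L^1$ time-Lipschitz estimate, and Kolmogorov's compactness lemma. Nothing essential is missing.
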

\begin{remark}
  If $\tau(\cdot,0)$ is of bounded variation, or if $u_0$ is of
  bounded variation, and satisfy \ref{def:w4} and \ref{def:w5}, then 
  \eqref{eq:tauinbv} holds.
\end{remark}
\begin{proof}
  Note that $\lambda_j\ge 0$, and that $\lambda_j$ is bounded in any compact interval.  We only want to show that $\lambda_j$ is bounded in any compact interval of $\R$, since Kolmogorov's compactness theorem gives the convergence in $C([0,T];L^1_{\mathrm{loc}}(\R))$. First observe that
  \begin{align*}
   \abs{ \phi_{j-1}\frac{\left(u_{j-1},e\right)}{\left(u_j,e\right)}} \le \frac{\phi_{j-1}}{\delta} \frac{r_{j-1}}{r_j}.
  \end{align*}
  Now if $r_j \ge r_{j-1}$, then clearly $\lambda_j$ is bounded. Again if $r_j < r_{j-1}$, thanks to Lemma~\ref{lem:new}, we have $\lambda_j$ is bounded.
  
  Set
  $\theta_j=\Dm \tau^{(i)}_j$.  Then $\theta_j$ satisfies
  \begin{equation}\label{eq:thetadef}
    \frac{d}{dt}\theta_j + \lambda_{j-1}\Dm\theta_j + \theta_j \Dm \lambda_{j}=0.
  \end{equation}
  Let $\eta_\alpha(\theta)$ be a smooth approximation to
  $\abs{\theta}$ such that
  \begin{equation*}
    \eta''_\alpha(\theta)\ge 0 \ \text{ and }\ \lim_{\alpha\to
      0}\eta_\alpha(\theta) = \lim_{\alpha\to
      0}\left(\theta\eta'_\alpha(\theta)\right)=\abs{\theta}.
  \end{equation*}
  We multiply \eqref{eq:thetadef} by $\eta'_\alpha(\theta_j)$ to get
  an equation satisfied by $\eta_\alpha(\theta_j)$. Observe that
  \begin{align*}
    \lambda_{j-1}\eta_\alpha'(\theta_j)\Dm\theta_j +
    \theta_j\eta_\alpha'(\theta_j)\Dm \lambda_j &=
    \lambda_{j-1}\Dm\eta_\alpha(\theta_j)
    +\theta_j\eta'_\alpha(\theta_j)
    \Dm \lambda_j\\
    &\qquad \qquad + \frac{\Dx}{2}\lambda_{j-1}
    \eta''_\alpha(\theta_{j-1/2})\left(\Dm\theta_j\right)^2 \\
    &\ge
    \Dm\left(\lambda_j\eta_\alpha(\theta_j)\right) \\
    &\qquad \qquad +
    \left(\theta_j\eta'_\alpha(\theta_j)-\eta_\alpha(\theta_j)\right)\Dm
    \lambda_j.
  \end{align*}
  Hence
  \begin{equation*}
    \frac{d}{dt}\eta_\alpha(\theta_j) +
    \Dm\left(\lambda_j\eta_\alpha(\theta_j)\right) \le
    \left(\eta_\alpha(\theta_j)-\theta_j\eta'_\alpha(\theta_j)\right)\Dm \lambda_j.
  \end{equation*}
  Now let $\alpha\to 0$ to obtain
  \begin{equation}
    \label{eq:abstheta}
    \frac{d}{dt}\abs{\theta_j} + \Dm\left(\lambda_j\abs{\theta_j}\right)
    \le 0.
  \end{equation}
  If we multiply this with $\Dx$, sum over $j$ and integrate in $t$,
  we find that
  \begin{equation}
    \label{eq:bv}
    \begin{aligned}
      \abs{\tau^{(i)}_\Dx(\cdot,t)}_{B.V} \le
      \abs{\tau^{(i)}_{\Dx}(\cdot,0)}_{B.V.} \le C
    \end{aligned}
  \end{equation}
  % By Helly's theorem, for each $t\in [0,T]$, the sequence
  % $\seq{\tau^{(i)}_{\Dx}(\cdot,t)}_{\Dx>0}$ has a subsequence which
  % converges strongly in $L^1_{\mathrm{loc}}(\R)$. By using a
  % diagonal argument, we can get this convergence for a dense
  % countable set $\seq{t_k}_{k\in \N}\subset [0,T]$.
  Note that, since $\tau^{(i)}_{\Dx}(\cdot,t)$ has bounded variation
  and satisfies \eqref{eq:taueq}, it is $L^1_{\mathrm{loc}}$ Lipschitz
  continuous in $t$, that is
  \begin{equation}
    \label{eq:timecont}
    \begin{aligned}
      \norm{\tau^{(i)}_{\Dx}(\cdot,t)-\tau^{(i)}_\Dx(\cdot,s)}_{L^1([-J, J])}
      \le \sup_{j\Dx \in [-J,J]} \lambda_j \abs{\tau^{(i)}(\cdot,0)}_{B.V.} \abs{t-s},
    \end{aligned}
  \end{equation}
  for any compact interval $[-J,J]$ of $\R$.
  Hence, the above estimates \eqref{eq:bv}, \eqref{eq:timecont} and an
  application of Kolmogorov's compactness criterion (Lemma
  ~\ref{lem:kolmogorov}) shows that $\tau^{(i)}=\lim_{\Dx\to
    0}\tau^{(i)}_{\Dx}$ is continuous in $t$, with values in
  $L^1_{\mathrm{loc}}(\R)$. In other words, the convergence is in
  $C([0,T];L^1_{\mathrm{loc}}(\R))$.
\end{proof}
Now we have the strong convergence of $r_\Dx$ and of
$\tau_\Dx$. This means that also $u_\Dx$ converges strongly to
some function $u$ in $C([0,T];L^1_{\mathrm{loc}}(\R))$ since we have
\begin{equation}
  \label{eq:trigonometry}
  u=\tau (u,e) = \frac{\tau}{\abs{\tau}} r.
\end{equation}
\begin{theorem}
  \label{thm:convergence} Assume that \ref{def:w1} -- \ref{def:w5} and
  \eqref{eq:tauinbv} hold.  Let $u_\Dx$ be defined by
  \eqref{eq:discrete} -- \eqref{eq:discrete_init}.  Then there exists
  a function $u$ in $L^{\infty}([0,T];L^1_{\mathrm{loc}}(\R))$ and a
  subsequence $\seq{\Dx_j}$ of $\seq{\Dx }$ such
  that $u_{\Dx_j}\to u$ as $\Dx_j\to 0$. The function $u$ is a weak
  solution to \eqref{eq:system}.
\end{theorem}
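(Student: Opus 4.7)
The plan is to derive a discrete weak formulation from the scheme \eqref{eq:discrete}, and then to pass to the limit as $\Dx\to 0$ using the two strong convergence results already established: $r_\Dx\to r$ a.e.\ (from Lemma~\ref{lem:rconverg1} via compensated compactness) and $u_\Dx\to u$ in $C([0,T];L^1_{\loc}(\R))$ (via the convergence of $\tau_\Dx$ and the identity \eqref{eq:trigonometry}). Since $u=\tau\,(u,e)$ and the components of $\tau$ are bounded in $BV$ while $r$ is bounded, $u\in L^\infty(\Omega)^n$, so condition (D.1) of the weak-solution definition holds automatically.

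First, I would fix a test function $\psi\in C_c^\infty(\R\times[0,T))$, define $\psi_j(t)=\psi(x_j,t)$ (any reasonable averaging over $I_j$ works equally well), multiply \eqref{eq:discrete} componentwise by $\Dx\,\psi_j(t)$, sum over $j\in\Z$ and integrate in $t\in[0,T]$. Integration by parts in time yields
\begin{equation*}
  -\int_0^T\sum_{j\in\Z}\Dx\,u_j(t)\,\partial_t\psi_j(t)\,dt
  \;-\;\sum_{j\in\Z}\Dx\,u_j(0)\,\psi_j(0)
  \;+\;\int_0^T\sum_{j\in\Z}\Dx\,\Dm\bigl(\phi_j u_j\bigr)\,\psi_j\,dt=0.
\end{equation*}
A summation by parts on the spatial term (using that $\psi$ has compact support in $x$) replaces $\Dm(\phi_j u_j)\psi_j$ by $-\phi_j u_j\,\Dp\psi_j$, producing the discrete identity
\begin{equation*}
  \int_0^T\!\!\sum_{j\in\Z}\Dx\Bigl(u_j\,\partial_t\psi_j + \phi(r_j)u_j\,\Dp\psi_j\Bigr)\,dt
  \;+\;\sum_{j\in\Z}\Dx\,u_j(0)\,\psi_j(0)=0.
\end{equation*}
This is precisely a Riemann-sum version of the weak formulation \eqref{eq:weaksys_1}.

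Next, I would pass to the limit term by term. For the time derivative term, $u_\Dx\to u$ in $L^1_{\loc}$ (strongly) together with the uniform bound from Lemma~\ref{lem:rl11} (so $\|u_\Dx\|_{L^\infty}\le C$) and the smoothness of $\psi$ give convergence to $\iint u\,\psi_t\,dxdt$; the replacement of $\partial_t\psi_j(t)$ by $\psi_t(x,t)$ inside each cell $I_j$ produces an error of order $\Dx$ uniformly, which vanishes. For the flux term, continuity of $\phi$ and the a.e.\ convergence $r_\Dx\to r$, combined with the uniform $L^\infty$ bound (so that $\phi(r_\Dx)$ is uniformly bounded and converges a.e.\ to $\phi(r)$), together with $u_\Dx\to u$ a.e.\ and boundedly, yield $\phi(r_\Dx)u_\Dx\to\phi(r)u$ in $L^1_{\loc}$ by dominated convergence; the difference $\Dp\psi_j-\psi_x$ is $O(\Dx)$ uniformly on the support of $\psi$, so this term converges to $\iint u\,\phi(\abs{u})\,\psi_x\,dxdt$. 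For the initial-data term, the cell-average definition \eqref{eq:discrete_init} together with $u_0\in L^1$ gives convergence of the Riemann sum to $\int_\R u_0\,\psi(x,0)\,dx$.

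The main technical point — and essentially the only one requiring care — is the strong convergence of $\phi(r_\Dx)u_\Dx$. This is why the preceding analysis separates the scalar conservation law for $r$ (handled by compensated compactness) from the directional variable $\tau$ (handled by $BV$ estimates): without both, weak-star convergence alone would leave a weak-times-weak product in the flux, and one could at best conclude that $u$ solves $u_t+(u\,\phi(\rho))_x=0$ for some auxiliary $\rho$ with $\abs{u}\le\rho$, as already noted in the introduction regarding the fully discrete scheme in Section~\ref{sec:fully}. With the strong convergence in hand, the product passes to the limit and $u$ is a weak solution of \eqref{eq:system}.
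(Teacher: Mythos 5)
Your proposal is correct and follows essentially the same route as the paper: the strong convergence of $u_{\Dx}$ (obtained from the a.e.\ convergence of $r_{\Dx}$ via compensated compactness and the $BV$-based convergence of $\tau_{\Dx}$, combined through \eqref{eq:trigonometry}) is taken as already established, and the weak formulation is then obtained by summation by parts against a test function and passage to the limit, with $D_+\psi\to\psi_x$. The paper's own proof is merely a terser version of the same argument, so no further comparison is needed.
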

\begin{proof}
  We have already established convergence.
  % Regarding the continuity, since $r_\Dx$ and $\tau^{(i)}_\Dx$ are
  % $L^1_{\mathrm{loc}}(\R)$ continuous in time,
  % \eqref{eq:trigonometry} implies that also $u$ has this continuity.

  It remains to show that $u$ is a weak solution. To this end, observe
  that\footnote{Here we ``extend'' the definition of $D_-$ and $D_+$
    to arbitrary functions in the obvious manner.}
  \begin{equation*}
    \int_0^T \int_\R \Dm\left(u_{\Dx_j}\phi(r_{\Dx_j})\right) \psi(x,t)\,dxdt
    = -  \int_0^T \int_\R u_{\Dx_j}\phi(r_{\Dx_j}) D_+\psi(x,t)\,dxdt.
  \end{equation*}
  As $\Dx_j\to 0$, $D_+\psi\to \psi_x$ for any $\psi \in
  C^1_0(\Omega)$. This means that $u$ is a weak solution.
\end{proof}

\section{Fully Discrete Schemes}
\label{sec:fully}
In this section, we propose three different fully discrete schemes and
show two of them converge to the unique entropy solution of
\eqref{eq:system}. We start by introducing some notations needed to
define the fully discrete finite difference schemes. We reserve $\Dt$
to denote a small positive number that represent the temporal
discretization parameter of the numerical schemes. For $n=0,1,\cdots,
N$, where $N \Dt =T$, for some fixed time horizon $T>0$, we set $t^n =
n \Dt$. For any function $v(t)$, admitting pointvalues, we let
$D^t_{+}$ denote the discrete forward difference operator in the time
direction, i.e.,
\begin{equation*}
  \Dpt v(t) = \frac{v(t+\Dt)-v(t)}{\Dt}. 
\end{equation*}
Furthermore, we introduce the spatial-temporal grid cells
\begin{align*}
  I_j^n = [ x_{j-1/2}, x_{j+1/2}) \times [t^n, t^{n+1}).
\end{align*}
As before, to a sequence $\seq{u^n_j}_{j\in \Z, n\ge 0}$ we associate
the function $u_\Dx$ defined by
\begin{equation*}
  u_{\Dx}(x,t)= \sum_{j \in \Z, n\ge 0} u^n_j \mathds{1}_{I^n_j}(x,t),
\end{equation*}
similarly, we also define $r_{\Dx}$ as
\begin{equation*}
  r_{\Dx}(x,t)= \sum_{j \in \Z, n\ge 0} r^n_j \mathds{1}_{I^n_j}(x,t).
\end{equation*}

First, we consider the following fully discrete finite difference
scheme
\begin{equation}
  \label{eq:fullydiscrete}
  \Dpt u^n_j + \Dm\left(u^n_j\phi\left(\abs{u^n_j}\right)\right) = 0, 
\end{equation}
with initial values
\begin{equation*}
  u^0_j = \frac{1}{\Dx}\jint u_0(x)\,dx.
\end{equation*}

We start by proving the following lemma.
\begin{lemma}
  \label{lem:discrete_weak_bv} Suppose $u_0 \in L^2(\R)^n \cap
  L^{\infty}(\R)^n$, and
  \begin{equation*}
    \lambda \le
    \min\seq{\frac{1}{\norm{f'}_{L^\infty}},
      \frac{1}{C_\phi\left(1+\norm{u_0}_{L^\infty(\R)}\right)^2}}, 
  \end{equation*}
  hold. Then
  \begin{equation}
    \label{eq:1l2bnd}
    \begin{aligned}
      \norm{u_{\Dx}(\cdot,t_n)}_{L^2(\R)^n}^2 & \le \norm{u_0}_{L^2(\R)^n}^2, \\
      \norm{u_{\Dx}(\cdot,t_n)}_{L^{\infty}(\R)^n}^2 & \le
      \norm{u_0}_{L^{\infty}(\R)^n}^2,
    \end{aligned}
  \end{equation}
  for all $n>0$, furthermore
  \begin{equation}
    \label{eq:l2weak_bv}
    \Dt\Dx \sum_{n=0,j\in \Z}^{N-1} \Dx \abs{\Dm u^n_j}^2 \le 2\norm{u_0}^2_{L^2(\R)^n},
  \end{equation}
  where $\lambda = \frac{\Dt}{\Dx}$ and $N \Dt=T$.
\end{lemma}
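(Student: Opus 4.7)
The plan is to establish the three bounds by a discrete energy argument, done in two stages: first the $L^\infty$ bound via a monotone-scheme domination, then the $L^2$ bound and the weak-BV estimate simultaneously from a discrete energy identity.

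For the $L^\infty$ bound, I would rewrite the scheme explicitly as
\begin{equation*}
  u^{n+1}_j = \bigl(1-\lambda \phi(r^n_j)\bigr) u^n_j + \lambda \phi(r^n_{j-1}) u^n_{j-1}.
\end{equation*}
The CFL condition $\lambda \le 1/\norm{f'}_{L^\infty}$, combined with $f'(r)=\phi(r)+r\phi'(r)\ge \phi(r)$ from \ref{def:w1}, ensures $1-\lambda\phi(r^n_j)\ge 0$. Taking Euclidean norms and using the triangle inequality yields the scalar inequality
\begin{equation*}
  r^{n+1}_j \le \bigl(1-\lambda\phi(r^n_j)\bigr)r^n_j + \lambda\phi(r^n_{j-1})r^n_{j-1},
\end{equation*}
whose right-hand side is the monotone upwind update for $r_t+f(r)_x=0$ under the same CFL. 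The usual maximum-principle argument for monotone scalar schemes then delivers $\norm{u_\Dx(\cdot,t_n)}_{L^\infty} \le \norm{u_0}_{L^\infty}$.

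For the $L^2$ bound and weak BV, I would take the Euclidean inner product of the scheme with $2u^n_j$ and use the identity
\begin{equation*}
  |u^{n+1}_j|^2 - |u^n_j|^2 = -2\Dt\,(u^n_j,\Dm F(u^n_j)) + \Dt^2\,|\Dm F(u^n_j)|^2,
\end{equation*}
where $F(u)=u\phi(\abs{u})$. Multiplying by $\Dx$, summing over $j\in\Z$, and applying summation by parts together with the discrete Leibniz rule and the elementary identity $(u^n_j,\Dm u^n_j) = \tfrac12\bigl(\Dm (r^n_j)^2 + \Dx|\Dm u^n_j|^2\bigr)$ yields, after telescoping,
\begin{equation*}
  2\Dx\sum_j (u^n_j,\Dm F(u^n_j)) = \Dx\sum_j (r^n_j)^2 \Dm\phi(r^n_j) + \Dx^2 \sum_j \phi(r^n_{j-1})\,|\Dm u^n_j|^2,
\end{equation*}
and the first sum on the right is nonnegative by exactly the integral representation used in Lemma~\ref{lem:rl11}. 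The quadratic Taylor error is controlled by $|\Dm F(u^n_j)| \le \norm{f'}_{L^\infty}\,|\Dm u^n_j|$, which follows from the mean-value theorem applied to $F$: the operator norm of $dF$ is bounded by the largest eigenvalue modulus $\norm{f'}_{L^\infty} \le C_\phi(1+\norm{u_0}_{L^\infty})$. The second CFL bound $\lambda C_\phi(1+\norm{u_0}_{L^\infty})^2 \le 1$ then allows this Taylor error to be absorbed into the discrete viscosity, producing a clean cell-energy inequality from which \eqref{eq:1l2bnd} and \eqref{eq:l2weak_bv} follow by summing over $n=0,\ldots,N-1$ and telescoping.

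The hard part will be the absorption step: the Taylor error $\Dt^2 \Dx \sum_j |\Dm F(u^n_j)|^2$ carries a plain $|\Dm u|^2$ factor, whereas the discrete viscosity emerging from the identity above is weighted by $\phi(r^n_{j-1})$. Balancing the two requires using both CFL constraints simultaneously and crucially exploiting the extra nonnegative term $\Dx\sum_j (r^n_j)^2 \Dm\phi(r^n_j) = \sum_j \int_{r^n_{j-1}}^{r^n_j}\bigl((r^n_j)^2-s^2\bigr)\phi'(s)\,ds \ge 0$, exactly as in the semi-discrete case; this balancing dictates the precise form of $C_\phi$ and the second CFL condition appearing in the statement.
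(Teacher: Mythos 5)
Your plan is, in all essentials, the paper's proof: the exact quadratic identity obtained by pairing the scheme with $u^n_j$ (equivalently $\eta(u)=\abs{u}^2$ in the discrete entropy identity \eqref{eq:entropyfully}), the discrete Leibniz splitting of $(u^n_j,\Dm F(u^n_j))$ into a telescoping flux plus the nonnegative term $\int_{r^n_{j-1}}^{r^n_j}\bigl((r^n_j)^2-s^2\bigr)\phi'(s)\,ds$ plus the weighted viscosity $\Dx\,\phi^n_{j-1}\abs{\Dm u^n_j}^2$, and then absorption of the $\Dt^2$ error under the second CFL constraint; your algebraic identities all check out. Your route to the $L^\infty$ bound (convex--combination form of the scheme, triangle inequality, scalar maximum principle for the upwind update of $r$, using $f'(r)\ge\phi(r)$) is correct and somewhat more direct than the paper's, which passes through $R=r^2$ and the flux $g$ of \eqref{eq:gdef}; both are the same monotonicity argument.

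The one step you leave open is exactly where the real difficulty sits, and the resolution you sketch is not the paper's and does not obviously close. You propose to absorb the part of $\Dt^2\abs{\Dm F(u^n_j)}^2$ coming from $u^n_j\Dm\phi^n_j$ into $\int_{r^n_{j-1}}^{r^n_j}\bigl((r^n_j)^2-s^2\bigr)\phi'(s)\,ds$; but that integral degenerates quadratically at the endpoint $s=r^n_j$, while
\begin{equation*}
  \Dx\,(\Dm\phi^n_j)^2=\frac{1}{\Dx}\Bigl(\int_{r^n_{j-1}}^{r^n_j}\phi'(s)\,ds\Bigr)^2
\end{equation*}
does not, so if $\phi'$ concentrates near $s=r^n_j$ the good term can be much smaller than the bad one and the absorption fails. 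The paper instead bounds the whole error crudely by $C_\phi\lambda\left(1+\abs{u^n_j}\right)^2\Dx\abs{\Dm u^n_j}^2$ and absorbs it into the viscosity $\phi^n_{j-1}\Dx\abs{\Dm u^n_j}^2$ via the second CFL bound --- this is precisely where the constant $C_\phi\left(1+\norm{u_0}_{L^\infty(\R)}\right)^2$ in the statement comes from --- and then discards the weight $\phi^n_{j-1}$ to reach the unweighted estimate \eqref{eq:l2weak_bv}. That step tacitly requires $\phi$ to be bounded below by a positive constant on the range of $r_\Dx$ (the paper's displayed lower bound carries a factor $\phi(0)$, which vanishes under \ref{def:w1}), so near $r=0$ neither your absorption nor the paper's is airtight; away from the vacuum both work, and the remaining telescoping over $n$ that yields \eqref{eq:1l2bnd} and \eqref{eq:l2weak_bv} is routine.
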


\begin{proof}
  For any (differentiable) function $\eta:\R^n \to \R$, we can take
  the inner product of \eqref{eq:fullydiscrete} with
  $\nabla_u\eta(u^n_j)$ and obtain
  \begin{multline}
    \label{eq:entropyfully}
    \Dpt \eta\left(u^n_j\right) +
    \Dm\left(\eta\left(u^n_j\right)\phi^n_j\right)\\
    + \left(\left(\nabla_u\eta\left(u^n_j\right),u^n_j\right)
      -\eta\left(u^n_j\right)\right)\Dm \phi^n_j \\
    + \frac{\Dx}{2}\phi^n_{j-1} {\Dm u^n_j}^T d^2_u\eta^n_{j-1/2}\Dm
    u^n_j -\frac{\Dt}{2}{\Dpt u^n_j}^T d^2\eta^{n+1/2}_j \Dpt u^n_j =
    0
  \end{multline}
  As before, we choose $\eta(u)=\abs{u}^2$ to get
  \begin{multline*}
    \Dpt \abs{u^n_j}^2 + \Dm\left(\abs{u^n_j}^2\phi^n_j\right)
    + \abs{u^n_j}^2\Dm \phi^n_j \\
    + \phi_{j-1}^n\Dx\abs{\Dm u^n_j}^2 -\Dt\abs{\Dm u^n_j\phi^n_j}^2 =
    0.
  \end{multline*}
  The upper line in the above formula can be rewritten as
  \begin{equation*}
    \Dpt \abs{u^n_j}^2 +  \Dm\left(g^n_j\right) +
    \frac{1}{\Dx}\int_{r^n_{j-1}}^{r^n_j}\!\!
    \left(\left(r^n_j\right)^2-s^2\right)\phi'(s)\, ds,
  \end{equation*}
  where $g$ is defined in \eqref{eq:gdef}. In order to balance the two
  last terms we proceed as follows.
  \begin{align*}
    \phi_{j-1}&\Dx\abs{\Dm u^n_j}^2 -\Dt\abs{\Dm u^n_j\phi^n_j}^2
    \\
    &\ge \phi(0) \Dx \left(\abs{\Dm u^n_j}^2 -
      \lambda\abs{\phi^n_{j-1} \Dm u^n_j + u^n_j \Dm \phi^n_j}^2
    \right)
    \\
    &\ge \phi(0)\Dx\biggl( \abs{\Dm u^n_j}^2
    \\
    &\hphantom{\ge \Dx\phi(0)\quad} - \lambda
    \left(\left(\left(1+\abs{u^n_{j-1}}\right)\max_{r}\phi'(r)\right)^2
      \abs{\Dm u^n_j}^2 + \abs{u^n_j}^2 \max_r\phi'(r)\abs{\Dm
        u^n_j}^2\right)
    \biggr)\\
    &\ge \phi(0)\Dx \abs{\Dm u^n_j}^2
    \left(1-C_\phi\lambda\left(1+\abs{u^n_j}\right)^2\right),
  \end{align*}
  where the constant $C_\phi$ only depends on $\phi'$. We have the
  obvious inequality
  \begin{equation*}
    \abs{u^n_j}^2 \le \frac{1}{\Dx}\norm{u_{\Dx}(\cdot,t_n)}_{L^2(\R)^n}^2.
  \end{equation*}
  We shall not use this, instead we assume that $\lambda$ is so small
  that
  \begin{equation}
    \label{eq:cfl1}
    1-C_\phi\lambda\left(1+\sup_j r^n_j\right)^2\ge \frac{1}{2}.
  \end{equation}
  If this holds, setting $R=r^2$, then we have that
  \begin{align*}
    R^{n+1}_j &\le R^n_j - \lambda\left(g\left(\sqrt{R^n_j}\right)-
      g\left(\sqrt{R^n_{j-1}}\right)\right)\\
    &=R^{n}_j - \lambda
    \frac{g'\left(\sqrt{R^n_{j-1/2}}\right)}{2\sqrt{R^n_{j-1/2}}}
    \left(R^n_j-R^n_{j-1}\right)
    \\
    &=R^n_j - \lambda f'(r^n_{j-1/2}) \left(R^n_j-R^n_{j-1}\right)\\
    &=\left(1-\lambda f'(r^n_{j-1/2}\right) R^n_j + \lambda
    f'(r^n_{j-1/2})R^n_{j-1},
  \end{align*}
  where $f(r)=r\phi(r)$. Hence if
  \begin{equation}
    \label{eq:cfl2}
    \lambda \norm{f'}_{L^\infty}<1,
  \end{equation}
  then $R^{n+1}_j$ is dominated by a convex combination of $R^n$ and
  $R^n_{j-1}$, and thus $\sup_j\seq{\abs{u^{n+1}_j}}\le \sup_j
  \seq{\abs{u^n_j}}$. Therefore, if we assume that
  \begin{equation}
    \label{eq:cfl_final}
    \lambda \le
    \min\seq{\frac{1}{\norm{f'}_{L^\infty}},
      \frac{1}{C_\phi\left(1+\norm{u_0}_{L^\infty(\R)}\right)^2}}, 
  \end{equation}
  we have that $r^n_j \le r^0_j\le \norm{u_0}_{L^\infty(\R)}$ for all
  $n$ and $j$. Hereafter, \eqref{eq:cfl_final} is always assumed to
  hold.

  Under the CFL-condition, \eqref{eq:cfl_final}, the equation for
  $R^n_j$ can be written
  \begin{equation*}
    \Dpt R^n_j + \Dm G(R^n_j) + \frac{\Dx}{2} \abs{\Dm
      u^n_j}^2 \le 0,
  \end{equation*}
  where $G(R)=g(\sqrt{R})$.  Summing this over $n=0,\ldots,N-1$ yields
  \begin{equation*}
    \Dx \sum_j R^N_j + \Dx\Dt\frac12 \sum_{n,j}\Dx\abs{\Dm u^n_j}^2
    \le \Dx \sum_j R^0_j \le \int_\R \abs{u_0}^2\,dx. 
  \end{equation*}
  This finishes the proof of the lemma.
\end{proof}
Rewriting yet again the scheme for $R$, we have
\begin{equation}\label{eq:Reqn}
  \Dpt R^n_j + \Dm G\left(R^n_j\right) = e_{1,j}^n + e_{2,j}^2,
\end{equation}
where
\begin{align*}
  e_{1,j}^n &=-\frac{1}{\Dx} \int_{r_{j-1}^n}^{r^n_j} \!\!
  \left(\left(r^n_j\right)^2 - s^2\right)\phi'(s)\,ds\\
  e_{2,j}^n &= -\phi_{j-1}^n\Dx\abs{\Dm u^n_j}^2 +\Dt\abs{\Dm
    \left(u^n_j\phi^n_j\right)}^2.
\end{align*}
Let us define
\begin{equation*}
  e_{i,\Dx}=e^n_{i,j} \ \ \text{for $x\in (x_{j-1/2},x_{j+1/2}]$ and 
    $t \in [t_n,t_{n+1})$, and $i=1,2$.}
\end{equation*}
\begin{lemma}
  \label{lem:cmloc_discrete}
Assume that \ref{def:w1}, \ref{def:w2} and \ref{def:w4} hold, then
  we have that $e_{i,\Dx}\in \CMloc(\Omega)$ for $i=1,2$.
\end{lemma}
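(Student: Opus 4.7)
The strategy parallels the proof of Lemma \ref{lem:ei} for the semi-discrete case: under the standing CFL hypothesis \eqref{eq:cfl_final} (which is always in force after Lemma \ref{lem:discrete_weak_bv}) both $e_{1,j}^n$ and $e_{2,j}^n$ have a definite sign, and one controls their total mass by telescoping the identity \eqref{eq:Reqn}. For $e_1$, whether $r_j^n\ge r_{j-1}^n$ or $r_j^n<r_{j-1}^n$, assumption \ref{def:w1} makes the oriented integral $\int_{r_{j-1}^n}^{r_j^n}((r_j^n)^2-s^2)\phi'(s)\,ds$ non-negative, so $e_{1,j}^n\le 0$. For $e_2$, the computation already performed inside Lemma \ref{lem:discrete_weak_bv} shows that $\phi^n_{j-1}\Dx|\Dm u_j^n|^2 - \Dt|\Dm(u_j^n\phi_j^n)|^2\ge 0$ under \eqref{eq:cfl_final}, whence $e_{2,j}^n\le 0$.

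Next I multiply \eqref{eq:Reqn} by $\Dx\Dt$ and sum over $j\in\Z$ and $n=0,\dots,N-1$. The spatial telescope of $\Dm G(R_j^n)$ vanishes, leaving only the time telescope $\Dx\sum_j (R_j^N-R_j^0)$. Combining $R_j^N\ge 0$ with the $L^2$ initial bound built into Lemma \ref{lem:discrete_weak_bv} and the uniform sign of the two error terms yields
\[
\Dt\,\Dx\sum_{n=0}^{N-1}\sum_{j\in\Z}\bigl(|e_{1,j}^n|+|e_{2,j}^n|\bigr) = -\Dt\,\Dx\sum_{n,j}\bigl(e_{1,j}^n+e_{2,j}^n\bigr) \le \norm{u_0}_{L^2(\R)^n}^2.
\]
Consequently, for any $\psi\in C_0(\Omega)$,
\[
\abs{\langle e_{i,\Dx},\psi\rangle} \le \norm{\psi}_{L^\infty(\Omega)}\iint_\Omega|e_{i,\Dx}|\,dx\,dt \le \norm{u_0}_{L^2(\R)^n}^2\,\norm{\psi}_{L^\infty(\Omega)},
\]
so that $e_{i,\Dx}\in\CMloc(\Omega)$ with a bound independent of $\Dx$, as required.

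The only genuinely subtle point is establishing the sign of $e_{2,j}^n$, but that was already disposed of via the CFL condition \eqref{eq:cfl_final} inside Lemma \ref{lem:discrete_weak_bv}; once that is accepted, the present lemma reduces to the short sign-plus-telescoping argument sketched above, entirely analogous to the semi-discrete Lemma \ref{lem:ei}.
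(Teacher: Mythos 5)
Your proof is correct, but it takes a different route from the paper's. The paper does not use the signs of $e^n_{1,j}$ and $e^n_{2,j}$ at all: it establishes the pointwise bounds $\abs{e^n_{1,j}}\le \frac{C}{\Dx}(r^n_j-r^n_{j-1})^2(r^n_j+r^n_{j-1})\le C\Dx\abs{\Dm u^n_j}^2$ (using the boundedness of $\phi'$ and of $r_\Dx$) and $\abs{e^n_{2,j}}\le C\Dx\abs{\Dm u^n_j}^2$, and then invokes the weak BV estimate \eqref{eq:l2weak_bv} from Lemma~\ref{lem:discrete_weak_bv} to control $\iint_\Omega\abs{e_{i,\Dx}}\,dxdt$. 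You instead observe that assumption \ref{def:w1} forces $e^n_{1,j}\le 0$ regardless of the ordering of $r^n_j$ and $r^n_{j-1}$, that the CFL computation inside Lemma~\ref{lem:discrete_weak_bv} forces $e^n_{2,j}\le 0$, and then telescope \eqref{eq:Reqn} directly to get $\Dt\Dx\sum_{n,j}(\abs{e^n_{1,j}}+\abs{e^n_{2,j}})\le\norm{u_0}^2_{L^2(\R)^n}$. The two arguments are cousins -- the weak BV estimate \eqref{eq:l2weak_bv} is itself obtained by exactly the telescoping you perform -- so your version is essentially an inline re-derivation of that estimate that trades the pointwise bounds for sign information; it yields a cleaner explicit constant. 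The one thing the paper's formulation buys that yours does not is reusability: immediately after this lemma the same pointwise-bound-plus-weak-BV argument is applied verbatim to $e_{3,\Dx}$ and $e_{4,\Dx}$, which have no definite sign, so your sign-based shortcut would not extend there. Both proofs share the same implicit debt to the CFL computation of Lemma~\ref{lem:discrete_weak_bv} for handling the $\Dt\abs{\Dm(u^n_j\phi^n_j)}^2$ term, so you are not assuming anything the paper does not.
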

\begin{proof}
  First we observe that there is a constant $C$ (independent of $n$,
  $j$ and $\Dx$ such that
  \begin{equation*}
    \abs{e_{2,j}^n}\le C\Dx\abs{\Dm u^n_j}^2.
  \end{equation*}
  We also have that
  \begin{equation*}
    \abs{e^n_{1,j}}\le \frac{C}{\Dx} \left(r^n_j-r_{j-1}^n\right)^2
    \left(r^n_j+r^n_{j-1}\right) \le C\Dx \abs{\Dm u^n_j}^2,
  \end{equation*}
  for some constant $C$ depending of $\phi$ and
  $\norm{u_0}_{L^\infty(\R)}$. Then, for a test function $\psi\in
  L^\infty(\Omega)$,
  \begin{equation*}
    \iint_\Omega e_{i,\Dx}\psi\,dxdt \le
    C\norm{\psi}_{L^\infty(\Omega)}
    \iint_\Omega \Dx \abs{\Dm u_{\Dx}}^2 \,dxdt \le 
    C\norm{\psi}_{L^\infty(\Omega)},
  \end{equation*}
  for $i=1,2$. This finishes the proof.
\end{proof}
Multiplying \eqref{eq:Reqn} by $G'(R^n_j)$ we find
\begin{multline}
  \label{eq:eta2disc}
  \Dpt G^n_j + \Dm q_{2,j}^n =
  G'\left(R^n_j\right)\left(e^n_{1,j}+e^2_{2,j}\right) \\
  +\frac{\Dt}{2} G''\left(R^{n+1/2}_j\right)\left(\Dpt R^n_j\right)^2\\
  +\frac{\Dx}{2} G''\left(R^n_{j-1/2}\right)\left(\Dm R^n_j\right)^2
  -\frac{\Dx}{2} q_{2}''\left(\tilde{R}^n_{j-1/2}\right) \left(\Dm
    R^n_j\right)^2,
\end{multline}
where
\begin{equation*}
  q_2(R)=\int_k^R \left(G'(\theta)\right)^2\,d\theta.
\end{equation*}
We find that
\begin{equation*}
  G''(R)=\frac12 \phi''(r)+ \frac{\phi'(r)}{r}.
\end{equation*}
This is potentially unbounded as $r\downarrow 0$, hence we shall
assume that $\phi$ is such that
\begin{equation}
  \label{eq:phiassumption}
  \lim_{r\downarrow 0} \frac{\phi'(r)}{r} < \infty.
\end{equation}
We are going to show that also the right hand side of
\eqref{eq:eta2disc} is in $\CMloc(\Omega)$. First observe that since
$r^n_j$ is bounded,
\begin{align*}
  \left(\Dm R^n_j\right)^2
  &=\left[\left(r^n_j+r^n_{j-1}\right)\Dm r^n_j\right]^2\\
  &\le C\left(\Dm r^n_j\right)^2\\
  &\le C\abs{\Dm u^n_j}^2,
\end{align*}
and similarly $(\Dpt R^n_j)^2 \le \abs{\Dpt u^n_j}^2$. Defining
\begin{align*}
  e^n_{3,j}&= \frac{\Dt}{2} G''\left(R^{n+1/2}_j\right)\left(\Dpt R^n_j\right)^2\\
  e^n_{4,j}&= +\frac{\Dx}{2}
  \left(G''\left(R^n_{j-1/2}\right)\left(\Dm R^n_j\right)^2 -
    q_{2}''\left(\tilde{R}^n_{j-1/2}\right) \left(\Dm
      R^n_j\right)^2\right)
\end{align*}
By the assumption \eqref{eq:phiassumption} $G''$ and $q_2''$ are
locally bounded, hence we have that
\begin{equation*}
  \abs{e_{i,j}^n} \le C\Dx \abs{\Dm u^n_j}^2, \ \ \text{for $i=3,4$,}
\end{equation*}
and for some $C$ which is independent of $\Dx$. By the same argument
used to prove Lemma~\ref{lem:cmloc_discrete}, also $e_{3,\Dx}$ and
$e_{4,\Dx}$ are in $\CMloc(\Omega)$. We now have established that
\begin{equation*}
  \Dpt \eta^n_{i,j}+ \Dm q^n_{i,j} = E^n_{i,j}, \ \ \text{$i=1,2$,}
\end{equation*}
where
\begin{equation*}
  \eta^n_{1,j}=\left(R^n_j-k\right), \ \ \eta^n_{2,j}=G(R^n_j)-G(k),
\end{equation*}
and $q^n_{i,j}$ are given by the corresponding fluxes
\begin{equation*}
  q^n_{1,j}=G(R^n_{i,j})-G(k), \ \ q^n_{2,j}=\int_{k}^{R^n_j} G'(\theta)^2\,d\theta.
\end{equation*}
The terms $E^n_{i,j}$ are such that the corresponding piecewise
continuous functions $E_{i,\Dx}$ are in $\CMloc(\Omega)$.
\begin{lemma}
  \label{lem:rconverg} Suppose \eqref{eq:phiassumption} and the
  assumptions \ref{def:w1} -- \ref{def:w4} hold.
  Then there is a subsequence of $\seq{R_\Dx}$ (not relabeled) and a
  function $R$ such that $R_{\Dx} \to R$ a.e.~$(x,t)\in \Omega$. We
  have that $R\in L^\infty([0,T];L^1(\R))$.  Furthermore, $R$
  satisfies
  \begin{equation*}
    \begin{cases}
      R_t + G(R)_x \le 0, &x\in \R,\  t>0,\\
      R=\abs{u_0}^2, &x\in \R, \ t=0,
    \end{cases}
  \end{equation*}
  in the distributional sense.
\end{lemma}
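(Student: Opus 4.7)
The plan is to apply the compensated compactness theorem, Theorem~\ref{thm:compcomp}, with $f=G$ to the sequence $\seq{R_\Dx}$, and then extract the distributional subsolution inequality from the pointwise discrete estimate $\Dpt R^n_j + \Dm G(R^n_j) \le -\tfrac{\Dx}{2}\abs{\Dm u^n_j}^2$ that was already produced in the proof of Lemma~\ref{lem:discrete_weak_bv}. Uniform boundedness $R_\Dx=r_\Dx^2\le \norm{u_0}_{L^\infty(\R)^n}^2$ is immediate from \eqref{eq:1l2bnd}, and the $L^\infty([0,T];L^1(\R))$ bound on $R$ follows from $\norm{R_\Dx(\cdot,t)}_{L^1(\R)}=\norm{r_\Dx(\cdot,t)}_{L^2(\R)}^2\le \norm{u_0}_{L^2(\R)^n}^2$. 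The non-degeneracy $\mathrm{meas}\seq{R\st G''(R)=0}=0$ reduces, via the chain rule $G''(R)=(2\phi'(r)+r\phi''(r))/(2r)$ for $r=\sqrt{R}>0$ (with the extension to $r=0$ supplied by \eqref{eq:phiassumption}), directly to assumption \ref{def:w3}.

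The substantive step is to establish $H^{-1}_\loc$ compactness of $\seq{\eta_i(R_\Dx)_t+q_i(R_\Dx)_x}_{\Dx>0}$ for $i=1,2$. I would run the splitting argument of Lemma~\ref{lem:compact} in both space and time: pairing with $\psi\in H^1_0(\Omega)$ and summing by parts gives a decomposition $\CL_i=\CL_{i,1}+\CL_{i,2}$, where $\CL_{i,1}$ is the piecewise-constant pairing with the cell values $\Dpt\eta^n_{i,j}+\Dm q^n_{i,j}=E^n_{i,j}$, which lies in $\CMloc(\Omega)$ by Lemma~\ref{lem:cmloc_discrete} (and the analogous estimates produced immediately afterwards for $i=2$), while the quadrature remainder $\CL_{i,2}$ is bounded in $H^{-1}$ by $C\sqrt{\Dx+\Dt}\,\norm{\psi}_{H^1(\Omega)}$ via Cauchy--Schwarz together with the pointwise comparisons $\abs{\Dm q_i(R^n_j)}\le C\abs{\Dm u^n_j}$ and $\abs{\Dpt \eta_i(R^n_j)}\le C\abs{\Dm u^n_j}$ (the latter via $\Dpt u^n_j=-\Dm(u^n_j\phi^n_j)$ and the $L^\infty$ bound on $u^n_j$) and the discrete dissipation estimate \eqref{eq:l2weak_bv}. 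Since $\CL_i$ is trivially bounded in $W^{-1,\infty}(\Omega)$ by the $L^\infty$ bound on $R_\Dx$, Murat's Lemma~\ref{lem:Murat} now gives the desired compactness in $H^{-1}_\loc(\Omega)$.

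Theorem~\ref{thm:compcomp} then produces a subsequence along which $R_\Dx\to R$ almost everywhere in $\Omega$. For the subsolution statement I multiply the pointwise inequality displayed above by a non-negative $\psi\in C^\infty_0(\R\times[0,T))$, sum by parts in $x$ and $t$, and pass to the limit using the strong convergence of $R_\Dx$ and the local Lipschitz continuity of $G$ on the bounded range of $R_\Dx$; the dissipative term $-\tfrac{\Dx}{2}\abs{\Dm u^n_j}^2$ only improves the inequality, and the boundary sum at $t=0$ recovers $R(x,0)=\abs{u_0(x)}^2$. The main obstacle throughout is controlling the temporal portion of the quadrature error $\CL_{i,2}$, which is precisely why the comparison $\abs{\Dpt u^n_j}\le C\abs{\Dm u^n_j}$ is the essential technical input.
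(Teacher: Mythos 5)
Your proposal is correct and follows essentially the same route as the paper: the same splitting of $\langle(\eta_{i,\Dx})_t+(q_{i,\Dx})_x,\psi\rangle$ into a $\CMloc$ part carried by $E^n_{i,j}$ and quadrature remainders (the paper's $\alpha_1$, $\alpha_2$) controlled by Cauchy--Schwarz, the bounds $\abs{\Dpt\eta^n_{i,j}},\abs{\Dm q^n_{i,j}}\le C\abs{\Dm u^n_j}$ and the dissipation estimate \eqref{eq:l2weak_bv}, followed by Murat's lemma, Theorem~\ref{thm:compcomp}, and the limit passage in the discrete subsolution inequality. Your explicit reduction of the non-degeneracy condition $\mathrm{meas}\seq{R\st G''(R)=0}=0$ to \ref{def:w3} via $G''(R)=(2\phi'(r)+r\phi''(r))/(2r)$ is a detail the paper leaves implicit, but it matches the computation given just before the lemma.
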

\begin{proof}
  The condition \ref{def:w3} means that if we can show that
  $(\eta_{i,\Dx})_t + (q_{i,\Dx})_x$ for $i=1,2$ is compact in
  $H^{-1}_\loc(\Omega)$, $R_\Dx$ converges strongly. To show this, let
  $\psi$ be a smooth function in $H^1_\loc(\Omega)$. We have that
  \begin{align*}
    \langle (\eta_{i,\Dx})_t & + (q_{i,\Dx})_x, \psi \rangle =
    \sum_{n=0,j\in \Z} \! \iint_{I^n_j} \eta^n_{i,j} \psi_t +
    q^n_{i,j}\psi_x \,dxdt\\
    &=\sum_{n,j} \jint \eta^n_{i,j} \left(\psi\left(t_{n+1},x\right)
      -\psi\left(t_n,x\right)\right)\,dx \\
    &\hphantom{\sum_{i,j}}\quad + \tint q^n_{i,j}
    \left(\psi\left(x_{j+1/2},t\right)-\psi\left(x_{j-1/2},t\right)\right)\,dt
    \\
    &=-\sum_{n,j}\left(\eta^{n+1}_{i,j}-\eta^n_{i,j}\right)\jint
    \psi\left(t_{n+1},x\right)\,dx \\
    &\hphantom{-\sum_{n,j}}\quad +
    \left(q^{n}_{i,j}-q^n_{i,j-1}\right)
    \tint \psi\left(x_{j-1/2},t\right)\,dt\\
    &\qquad - \sum_j\jint \eta^0_{i,j}\psi(x,0)\,dx\\
    &=-\sum_{n,j} \left(\Dpt \eta^n_{i,j} + \Dm q^n_{i,j}\right)
    \iint_{I^n_j} \psi(x,t)\,dxdt - \int_\R
    \eta_{i,\Dx}(x,0)\psi(x,0)\,dx\\
    &\qquad + \sum_{n,j} \Dpt \eta^n_{i,j} \iint_{I^n_j}
    \psi(x,t)-\psi\left(t_{n+1},x\right)\,dxdt \\
    &\qquad + \sum_{n,j} \Dm q^n_{i,j} \iint_{I^n_j}
    \psi(x,t)-\psi\left(x_{j-1/2},t\right)\,dxdt\\
    &= -\sum_{n,j} E^n_{i,j} \iint_{I^n_j} \psi(x,t)\,dxdt - \int_\R
    \eta_{i,\Dx}(x,0)\psi(x,0)\,dx + \alpha_1 + \alpha_2.
  \end{align*}
  Hence, $(\eta_{i,\Dx})_t + (q_{i,\Dx})_x$ consists of the sum of
  $\alpha_1$ and $\alpha_2$ and a term which is in
  $\CMloc(\Omega)$. We must show that $\alpha_1$ and $\alpha_2$ are
  compact in $H^{-1}(\Omega)$. First, observe that since $R^n_j$ is
  uniformly bounded,
  \begin{equation*}
    \left(\Dpt \eta^n_{i,j}\right)^2 \le C \abs{\Dm u^n_j}^2\ \text{ and }\ 
    \left(\Dm q^n_{i,j}\right)^2 \le C\abs{\Dm u^n_j}^2. 
  \end{equation*}
  Using this
  \begin{align}
    \alpha_1 &\le C\sum_{n,j} \abs{\Dm u^n_j} \iint_{I^n_j}
    \int_t^{t_{n+1}} \abs{\psi_t(x,\tau)}\,d\tau\, \,dxdt\notag\\
    &\le C\sum_{n,j} \abs{\Dm u^n_j} \iint_{I^n_j} \sqrt{t_{n+1}-t}
    \Bigl( \tint
    \left(\psi_t(x,\tau)\right)^2\,d\tau\Bigr)^{1/2}\,dxdt\notag\\
    &\le C\sum_{n,j}\abs{\Dm u^n_j} \Dt^{3/2}\sqrt{\Dx}
    \Bigl(\iint_{I^n_j} \left(\psi_t(x,t)\right)^2\,dxdt\Bigr)^{1/2}\notag\\
    &\le C\sqrt{\Dt} \biggl( \Dt\Dx \sum_{n,j} \Dx\abs{\Dm
      u^n_j}^2\biggr)^{1/2} \biggl( \sum_{n,j}\iint_{I^n_j}
    \left(\psi_t(x,t)\right)^2\,dxdt\biggr)^{1/2}\notag\\
    &\le C\sqrt{\Dx}\norm{\psi}_{H^1(\Omega)}.\label{eq:smallterm}
  \end{align}
  Hence $\alpha_1$ is compact in $H^{-1}(\Omega)$, that $\alpha_2$ is
  compact follows by analogous arguments. Now we have established the
  strong convergence (along a subsequence which we do not relabel) of
  $R_{\Dx}$.

  By the CFL-condition, \eqref{eq:cfl_final}, the right hand side of
  \eqref{eq:Reqn} is negative, hence
  \begin{equation*}
    \Dpt R^n_j + \Dm G(\R^n_j) \le 0.
  \end{equation*}
  Multiplying this with a non-negative test function $\psi$, doing a
  summation by parts and then sending $\Dx$ to zero, using the same
  arguments that led to \eqref{eq:smallterm}, yields that the limit
  $R$ is a subsolution in the distributional sense.
\end{proof}
Since $R_\Dx$ converges strongly to $R$, also $r_\Dx$ will converge
strongly to $r:=\sqrt{R}$. The sequence $\seq{u_\Dx}_{\Dx>0}$ is
uniformly bounded, so a subsequence will converge weak-$*$ to some
function $u\in L^{\infty}(\Omega)$. By using the arguments leading up
to \eqref{eq:smallterm} it is straightforward to show that
\begin{equation*}
  \iint_\Omega u \psi_t + u\phi(r) \psi_x \,dxdt + \int_\R
  u_0(x)\psi(x,0)\,dx = 0,
\end{equation*}
for all $\psi\in C^\infty_0(\Omega)$. Hence the limit $u$ is a
distributional solution of
\begin{equation*}
  u_t + \left(u\phi(r)\right)_x = 0.
\end{equation*}
In order to conclude that $u$ is a weak solution to \eqref{eq:system},
we would have to show that $\abs{u}=r$. We have not been able to prove
this, and merely conclude that $\abs{u}\le r$. The reason for this is
that $v\mapsto \abs{v}$ is convex, and that weak limits of a convex
function are not less than the convex function of the weak limit.

To overcome this difficulty, we propose another fully discrete scheme
based on explicit decoupling of the variables $r$ and $w$.
\subsection{A scheme which enforces the entropy condition}
\label{subsec:afully}
Define
\begin{equation*}
  w_{\Dx} =
  \begin{cases}
    \frac{u_{\Dx}}{r_{\Dx}}, \qquad &\text{if} \, \, r_{\Dx} \neq 0,  \\
    0, \qquad &\text{if} \, \, r_{\Dx} = 0,
  \end{cases}
\end{equation*}
and let $r_\Dx$ and $w_\Dx$ satisfy
\begin{equation}
  \label{eq:rfully}
  \begin{cases}
    r^{n+1}_j = r^n_j - \Dt \Dm f^n_j, &n\ge 0,\\
    r_j^0 = \abs{u^0_j},
  \end{cases}
\end{equation}
and
\begin{equation}
  \label{eq:vfully}
  \begin{cases}
    w^{n+1}_j = w^n_j - \Dt \phi^n_j \Dm w^n_j, &n\ge 0,\\
    r^0_j w^0_j = u^0_j.
  \end{cases}
\end{equation}
To ensure the convergence of the approximations $\seq{r_\Dx}$ we
choose
\begin{equation}
  \label{eq:cflr}
  \Dt \norm{f'}_{L^\infty(\R)} \le \Dx.
\end{equation} 
We list some useful properties of $r_{\Dx}$ in the next lemma
\cite{Holden}.
\begin{lemma}
  \label{lem:ensol}
  Assume that the CFL condition \eqref{eq:cflr} holds and $r_0 \in
  BV(\R) \cap L^{\infty}(\R)$. Then for each $\Dx >0$ we have that
  \begin{itemize}
  \item [(a)] $-M \le r_{\Dx}(x,t) \le M$, for all $x$ and $t>0$.
  \item [(b)] For $n \ge 0$ the functions
    \begin{align*}
      n \mapsto \Dx \sum_{j \in \Z} \abs{r^n_j}, \quad n \mapsto
      \sum_{j \in \Z} \abs{r^n_j - r^n_{j-1}}, \quad n \mapsto \sum_{j
        \in \Z} \abs{r^{n+1}_j - r^n_j}
    \end{align*}
    are non-increasing. In particular this means that the family
    $\lbrace{r^{\Dx}\rbrace}_{\Dx>0}$ is (uniformly in $\Dx$) bounded
    in $L^{\infty}(\R^{+}; L^1(\R)) \cap \mathrm{BV} (\R \times
    \R^{+})$.
  \item [(c)] Moreover $r_\Dx(\cdot,t) \to r(\cdot,t)$ strongly in
    $L^1(\R)$ for all $t\ge 0$, where $r\in
    \mathrm{Lip}([0,T];L^1(\R))$ and is the unique entropy (in the
    sense of Kru\v{z}kov) solution of the conservation law
    \begin{equation}
      \label{eq:scalcons}
      \begin{cases}
        r_t + f(r)_x = 0, & \\
        r(x,0)= r_0.&
      \end{cases}
    \end{equation}
  \end{itemize}
\end{lemma}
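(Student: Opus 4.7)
The plan is to apply the standard convergence theory for monotone, consistent, conservative schemes in the sense of Harten--Hyman--Lax and Crandall--Majda. First I rewrite the scheme in incremental form as $r^{n+1}_j = H(r^n_{j-1}, r^n_j)$ with $H(a,b) = b - \lambda (f(b) - f(a))$ where $\lambda = \Dt/\Dx$. Since $f'(r) = \phi(r) + r\phi'(r)\ge 0$ by \ref{def:w1}, together with the CFL condition \eqref{eq:cflr}, one has $\partial_1 H = \lambda f'(a)\ge 0$ and $\partial_2 H = 1 - \lambda f'(b) \ge 0$, so $H$ is monotone non-decreasing in each argument.

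For (a), the maximum principle follows immediately: $H(M,M)=M$ and $H(-M,-M)=-M$, and monotonicity propagates the bound $-M \le r^n_j \le M$ for all $j,n$. For (b), the $L^1$ bound and TVD property are the standard Crandall--Tartar consequence of conservation plus monotonicity: writing $r^{n+1}_j - r^{n+1}_{j-1}$ through a mean-value-theorem manipulation of $H$, one obtains $\sum_j |r^{n+1}_j - r^{n+1}_{j-1}| \le \sum_j |r^n_j - r^n_{j-1}|$, and a similar comparison with the zero solution yields the $L^1$ bound. The time-BV statement $\sum_j |r^{n+1}_j - r^n_j| \le \sum_j |r^n_j - r^{n-1}_j|$ follows by applying the same argument after observing that the scheme is translation-invariant in $n$, or directly from $|r^{n+1}_j - r^n_j| = \lambda |f(r^n_j) - f(r^n_{j-1})| \le \lambda \|f'\|_\infty |r^n_j - r^n_{j-1}|$ combined with the spatial TVD bound.

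For (c), the combined space-time BV bound together with the $L^\infty$ bound lets me invoke Helly's selection theorem (or Kolmogorov--Riesz with explicit $L^1$ moduli in $x$ and $t$) to extract a subsequence converging in $L^1_\loc$ to some $r \in \mathrm{Lip}([0,T]; L^1(\R))$. To identify $r$ as the Kruzkov entropy solution of \eqref{eq:scalcons}, I derive the discrete Kruzkov inequalities
\begin{equation*}
|r^{n+1}_j - k| \le |r^n_j - k| - \lambda\, \Dm Q^n_j(k),
\end{equation*}
for every $k \in \R$, with the numerical entropy flux $Q^n_j(k) = f(r^n_j \vee k) - f(r^n_{j-1} \vee k) - (f(r^n_j \wedge k) - f(r^n_{j-1} \wedge k))/2$ (or more simply by using $H(a\vee k,b\vee k) \ge H(a,b)\vee k$ and the analogous relation for $\wedge$, which follow from monotonicity of $H$). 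Multiplying by a non-negative test function, summing by parts, and letting $\Dx\to 0$ yields the continuous Kruzkov entropy inequality for $r$. Uniqueness of the Kruzkov entropy solution then forces the whole family $\{r_\Dx\}$ to converge, not just a subsequence.

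I do not expect any genuine obstacle since this is the classical theory; the only mild care is in verifying the monotonicity of $H$ under the stated CFL condition using $f' \ge 0$ from assumption \ref{def:w1}, and in justifying the passage to the limit in the discrete entropy inequality with the modulus-of-continuity arguments provided by the BV bounds in (b). For the purposes of this paper I would simply cite \cite{Holden} for the details, noting that the assumption $f' \ge 0$ is what reduces the general Lax--Friedrichs or Godunov monotone scheme to the pure upwind form \eqref{eq:rfully}.
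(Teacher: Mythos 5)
Your proposal is correct and takes essentially the same approach as the paper: the paper states this lemma without proof, citing \cite{Holden} for the standard theory of monotone, conservative, consistent schemes, which is precisely the argument you outline (monotonicity of the upwind update under the CFL condition since $f'\ge 0$, maximum principle, Crandall--Tartar $L^1$-contraction and TVD bounds, time-BV, compactness, and discrete Kru\v{z}kov inequalities). The only blemish is the stray factor of $1/2$ and the extra spatial difference built into your formula for $Q^n_j(k)$, but the alternative route you mention via $H(a\vee k,b\vee k)\ge H(a,b)\vee k$ is the clean one, and the point is moot since you, like the paper, defer the details to \cite{Holden}.
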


Observe that, we can write the scheme \eqref{eq:vfully} as
\begin{equation*}
  w^{n+1}_j = \left(1-\lambda\phi^n_j\right) w^n_j + \lambda \phi^n_j w^n_{j-1}.
\end{equation*}
If $\lambda \phi^n_j < 1$ for all $j$, then $w^{n+1}_j$ is a convex
combination of $w^n_j$ and $w^n_{j-1}$. Thus
\begin{equation}
  \label{eq:vbnds}
  \inf_j w_j^0 \le w^n_j \le \sup_j w^0_j, \quad n>0.
\end{equation}
and
\begin{equation}
  \label{eq:bv_1}
  \begin{aligned}
    \abs{w_\Dx(\cdot,t)}_{B.V.(\R)^n} \le
    \abs{w_{\Dx}(\cdot,0)}_{B.V.(\R)^n}.
  \end{aligned}
\end{equation}
Furthermore
\begin{equation}
  \label{eq:timecont_1}
  \begin{aligned}
    \Dx \sum_j \abs{w^{n+1}_j - w^n_j} \le \Dt \norm{\phi}_{L^\infty}
    \sum_j \abs{w^n_j - w^n_{j-1}} \le C\Dt
    \abs{w_{\Dx}(\cdot,t_n)}_{B.V.(\R)^n}.
  \end{aligned}
\end{equation}
Hence the map $t\mapsto w_{\Dx}(\cdot,t)$ is $L^1$- Lipschitz
continuous. Finally, the above estimates \eqref{eq:bv_1},
\eqref{eq:timecont_1} and an application of Kolmogorov's compactness
criterion (Lemma~\ref{lem:kolmogorov}) shows that $w=\lim_{\Dx\to
  0}{w_\Dx}$, where the convergence is along a
subsequence. Furthermore $w\in C([0,T];(L^1_\loc(\R))^n)$.

Multiply the equation \eqref{eq:rfully} for $r^{n+1}_j$ with that
\eqref{eq:vfully} for $w^{n+1}_j$ to get
\begin{align*}
  r^{n+1}_j w^{n+1}_j &= \left(r^n_j - \Dt\Dm f^n_j \right)
  \left(w^n_j - \Dt \phi^n_j \Dm w^n_j\right) \\
  &= r^n_j w^n_j - \Dt\left( w^n_j \Dm f^n_j + f^n_j \Dm w^n_j\right)
  + \Dt^2 \phi^n_j\Dm f^n_j \Dm w^n_j\\
  &=r^n_j w^n_j - \Dt \left( w^n_j \Dm f^n_j + f^n_{j-1}\Dm w^n_j
  \right) - \Dt \left(f^n_j - f^n_{j-1}\right) \Dm w^n_j\\
  &\qquad \quad + \Dt^2
  \phi^n_j\Dm f^n_j \Dm w^n_j \\
  &=r^n_j w^n_j - \Dt \Dm \left(f^n_j w^n_j \right) +
  \Dt\left(f^n_j-f^n_{j-1}\right) \Dm w^n_j \left(\lambda\phi^n_j -
    1\right).
\end{align*}
Then we have
\begin{equation}\label{eq:rvdisc}
  \Dpt \left(r^n_jw^n_j\right) + \Dm \left(f^n_j w^n_j\right) = \Dt
  \left(\lambda\phi^n_j-1\right)\left(f^n_j-f^n_{j-1}\right)\Dm w^n_j =: e^n_j.
\end{equation}
Let now $\psi\in C^\infty_0(\Omega)$ be a test function, multiply the
above equation by $\psi$ and integrate over $\Omega$ to get
\begin{align*}
  \sum_{n=1,j}^\infty \int_{t_n}^{t_{n+1}}& \jint r^n_j w^n_j \Dmt
  \psi +
  f^n_j w^n_j \Dp \psi \,dxdt\\
  &+ \frac{1}{\Dt} \int_0^\Dt\sum_j \jint r^0_jw^0_j \psi \,dxdt =
  \sum_{n,j} \int_{t_n}^{t_{n+1}} \jint e^n_j \psi \,dxdt.
\end{align*}
Since we have the convergence of $r_\Dx$ and $w_\Dx$, the left hand
side of this converges to
\begin{equation*}
  \iint_\Omega r w \psi_t + f(r) w \psi_x \,dxdt + \int_\R r(x,0)w(x,0)
  \psi(x,0)\,dx. 
\end{equation*}
Regarding the right hand side we have
\begin{align*}
  \Bigl| \sum_{n,j} \int_{t_n}^{t_{n+1}} & \jint e^n_j
  \psi \,dxdt\Bigr| \\
  &\le \Dt \norm{\psi}_{L^\infty(\Omega)}
  \left(\lambda\norm{\phi}_{L^\infty} + 1\right) \Dt \sum_{n,j}
  \abs{f^n_{j}-f^n_{j-1}}\,\abs{w^n_j -w^n_{j-1}}\\
  &\le \Dt C
  \norm{\psi}_{L^\infty(\R)}\norm{w_\Dx(x,0)}_{L^\infty(\R)^n} T
  \norm{r_{\Dx}(x,0)}_{B.V.(\R)},
\end{align*}
where $T$ is such that $\mathrm{supp}\psi\subset [0,T]$. Hence
\begin{equation*}
  \iint_\Omega r w \psi_t + f(r) w \psi_x \,dxdt + \int_\R r(x,0)w(x,0)
  \psi(x,0)\,dx=0.
\end{equation*}
Hence, we see that $r w$ is a weak solution to the Cauchy problem
\begin{equation*}
  (rw)_t + \left( f(r) w \right)_x = 0.
\end{equation*}
In other words, $(r,rw)$ is a weak solution to
\begin{equation}
  \label{eq:eqrrw}
  \begin{aligned}
    r_t + f(r)_x & =0, \\
    (r\,w)_t + (\phi(r) r\,w)_x &=0.
  \end{aligned}
\end{equation}
Next, we shall make use of Lemma ~\ref{lem:wagnar} to conclude that
$\abs{w} =1$.

Finally, collecting all the results above, we have proved the
following theorem.
\begin{theorem}
  \label{thm:final}
  Assume that $u_0\in B.V.(\R)$. If $\lambda=\Dt/\Dx$ satisfies the
  CFL-condition $\lambda<\sup_x f'(|u_0(x)|)$, and $u_\Dx$ is defined
  by \eqref{eq:rfully}, \eqref{eq:vfully}, then $u=\lim_{\Dx\to
    0}u_\Dx$ is the unique entropy (in the sense of
  Definition~\ref{def:entropy}) solution to \eqref{eq:system}.
\end{theorem}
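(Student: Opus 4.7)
The strategy is to combine the three ingredients already assembled: strong convergence $r_\Dx \to r$ to the unique Kru\v{z}kov entropy solution of \eqref{eq:scalar} provided by Lemma~\ref{lem:ensol}; strong $L^1_\loc$ compactness of $w_\Dx$, which follows from the spatial BV bound \eqref{eq:bv_1}, the temporal $L^1$-Lipschitz estimate \eqref{eq:timecont_1}, and Kolmogorov's compactness lemma \ref{lem:kolmogorov}; and an identification of the limit product $u:= r w$ as a weak solution of \eqref{eq:system} whose modulus coincides with $r$, so that Definition~\ref{def:entropy} is satisfied and uniqueness is inherited from Theorem~\ref{thm:FreistuhlerPanov}.

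First I would extract a subsequence so that $r_{\Dx_j}\to r$ and $w_{\Dx_j}\to w$ in $C([0,T];L^1_\loc(\R))$. Multiplying the discrete product identity \eqref{eq:rvdisc} by an arbitrary test function $\psi\in C^\infty_0(\Omega)$, performing summation by parts, and passing to the limit (the remainder term $\sum \iint e^n_j\psi$ is controlled by the BV and $L^\infty$ bounds exactly as in the estimate displayed just below \eqref{eq:rvdisc}) shows that $(r,rw)$ is a distributional solution of the triangular system \eqref{eq:eqrrw} with initial data $\abs{u_0}$ and $u_0$. Since $r$ is already the unique Kru\v{z}kov entropy solution of \eqref{eq:req}, it suffices to prove that $u=rw$ satisfies Definition~\ref{def:entropy}, which reduces to the pointwise identity $\abs{u}=r$, i.e.\ $\abs{w}=1$ almost everywhere on $\{r>0\}$.

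To obtain $\abs{w}=1$ I would invoke Wagner's transformation (Lemma~\ref{lem:wagnar}). Applying the change of variables $T$ defined by \eqref{eq:yeq} to $(r,rw)$ produces a pair $(\tau,\tilde w)$ with $\tilde w:=w\circ T^{-1}$ solving the decoupled system \eqref{eq:newsys}. The second equation $\tilde w_t=0$ forces $\tilde w(y,t)=\tilde w(y,0)$ a.e.; by the initialization $r^0_j w^0_j = u^0_j$, the trace $w_\Dx(\cdot,0)$ equals $u_0/\abs{u_0}$ (to within pointwise averaging on cells where $\abs{u_0}>0$), hence $\abs{\tilde w(\cdot,0)}=1$ and therefore $\abs{\tilde w}\equiv 1$. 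Transforming back via \eqref{eq:transformation} yields $\abs{w}=1$ on $\{r>0\}$; on $\{r=0\}$ one has $u=rw=0$ so that $\abs{u}=0=r$ automatically.

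The main technical obstacle is the infinite-mass hypothesis $\int_{-\infty}^0 r\,dx = \int_0^{\infty} r\,dx = \infty$ required in Lemma~\ref{lem:wagnar}, which fails for generic $L^1$ data. I would circumvent this by approximating $u_0$ by data $u_0^\mu := u_0 + \mu e$ remaining in $\Gamma_\delta$ (which makes $\abs{u_0^\mu}$ have infinite mass on both half-lines while preserving the structural assumptions \ref{def:w4}--\ref{def:w5}), applying the Wagner argument to the corresponding limits, and then sending $\mu\downarrow 0$ using the $L^1_\loc$ continuous dependence property (S) of Theorem~\ref{thm:FreistuhlerPanov} to recover $\abs{w}=1$ for the original initial data. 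Once this is in place, $u=rw$ is a weak solution of \eqref{eq:system} whose modulus $r$ satisfies Kru\v{z}kov's condition, so $u$ is the entropy solution in the sense of Definition~\ref{def:entropy}. Uniqueness (Theorem~\ref{thm:FreistuhlerPanov}(U)) then upgrades convergence from the extracted subsequence to the full family $\{u_\Dx\}_{\Dx>0}$, completing the proof.
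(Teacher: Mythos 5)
Your proposal follows essentially the same route as the paper: Lemma~\ref{lem:ensol} for the strong convergence of $r_\Dx$, the bounds \eqref{eq:bv_1} and \eqref{eq:timecont_1} plus Kolmogorov's lemma for $w_\Dx$, the product identity \eqref{eq:rvdisc} to identify $(r,rw)$ as a weak solution of \eqref{eq:eqrrw}, and Wagner's transformation (Lemma~\ref{lem:wagnar}) to conclude $\abs{w}=1$ and hence that $u=rw$ is the entropy solution, with uniqueness upgrading subsequential convergence to convergence of the whole family. You are in fact more careful than the paper on one point --- the infinite-mass hypothesis of Lemma~\ref{lem:wagnar}, which the paper invokes without comment --- and your perturbation $u_0+\mu e$ combined with the continuous-dependence property (S) is a reasonable way to patch it.
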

\begin{remark}
  Observe that to establish the convergence of the entire sequence
  $\seq{u_\Dx}_{\Dx\ge 0}$, we first use Kolmogorov's compactness
  criterion to establish convergence of a subsequence, then, since the
  entropy solution $u$ is unique, every subsequence of $\seq{u_\Dx}$ will
  contain a subsequence converging to $u$. This means that the entire
  sequence converges.
\end{remark}

We propose another scheme based on discretizing the ``conservative''
form \eqref{eq:req}--\eqref{eq:veq1}.  Let $r_\Dx$ and $u_\Dx$ satisfy
\begin{equation}
  \label{eq:rfully_1}
  \begin{cases}
    r^{n+1}_j = r^n_j - \Dt \Dm f(r^n_j), &n\ge 0,\\
    r_j^0 = \abs{u_j^0},
  \end{cases}
\end{equation}
and
\begin{equation}
  \label{eq:ufully}
  u^{n+1}_j=u^n_j - \Dt \Dm\left(u^n_j \phi(r^n_j)\right), 
\end{equation}
for $n\ge 0$ and $ f(r) =r \phi(r)$, with $u^0_j$ given by
\eqref{eq:discrete_init}.  Again as before, regarding the convergence
of the approximations $\seq{r_\Dx}$ we have Lemma ~\ref{lem:ensol}.
On the other hand, we can rewrite the scheme for $u^n_j$ as
\begin{equation*}
  u^{n+1}_j = u^n_j + \lambda \left(u^n_j\phi^n_j -
    u^n_{j-1}\phi^n_{j-1}\right)=:F_j^n\left(u^n_j,u^n_{j-1}\right).
\end{equation*}
Each component
$F_j^{(i),n}(u^n_j,u^n_{j-1})=F_j^n(u^{(i),n}_j,u^{(i),n}_{j-1})$ is
linear and monotone in both its arguments. Furthermore, the scheme for
$r^n_j$ can be written
\begin{equation*}
  r^{n+1}_j = F_j^n\left(r^n_j,r^n_{j-1}\right).
\end{equation*}
Set $w^n_j=r^n_j - u^{(i),n}_j$. Then, by the linearity of $F^n_j$,
\begin{align*}
  w^{n+1}_j &= F\left(r^n_j,r^n_{j-1}\right) -
  F\left(u^{(i),n}_j,u^{(i),n}_{j-1}\right) \\
  &=F\left(r^n-u^{(i),n}_j,r^n_{j-1}-u^{(i),n}_{j-1}\right)\\
  &=F\left(w^n_j,w^n_{j-1}\right).
\end{align*}
Therefore $\inf_j w^0_{j}\le w^{n+1}_j\le \sup_j w^{0}_j$. We have
that $w^0_j = \abs{u^0_j}-u^{(i),0}_j\ge 0$, therefore
\begin{equation}
  \label{eq:ubnd2}
  \abs{u_\Dx(x,t)}\le r_\Dx(x,t)\le \abs{u_0(x)}.
\end{equation}
Hence, the sequence $\seq{u_\Dx}_{\Dx>0}$ is uniformly bounded, and
there converges $L^\infty$ weak-$*$ (modulo a subsequence) to some
function $u$. A straightforward computation yields that
\begin{multline*}
  \biggl|\iint_\Omega u_\Dx \psi_t + u_\Dx \phi\left(r_\Dx\right)
  \phi_t
  \,dxdt + \int_\R u_0(x)\psi(x,0)\,dx \biggr|\\
  \le \sum_{n=1,j} \abs{u^n_j} \Bigl| \iint_{I^n_j} \left(\Dmt \psi -
    \psi_t\right)\,dxdt\Bigr| + \abs{u^n_j}\phi\left(r^n_j\right)
  \Bigl|\iint_{I^n_j} \left(D_+\psi
    -\psi_x\right)\,dxdt\Bigr|\\
  + \frac{1}{\Dt}\Bigl|\int_0^\Dt \int_\R u_\Dx(x,0)\psi(x,t)-\Dt
  u_0(x)\psi(x,0)\,dx\Bigr|
  \\
  \le C\Dx\left(\norm{\psi_{tt}}_{L^\infty(\Omega)}+
    \norm{\psi_{xx}}_{L^\infty(\Omega)}+\norm{\psi_t}_{L^\infty(\Omega)}\right).
\end{multline*}
Sending $\Dx$ to zero, we see that the limit $u$ is a weak solution to
the Cauchy problem
\begin{equation*}
  u_t + \left(u\phi(r)\right)_x = 0.
\end{equation*}
Again we shall make use of Lemma ~\ref{lem:wagnar} to conclude that
$\abs{u} =r$.

Finally, We have proved the following theorem.
\begin{theorem}
  \label{thm:final2}
  Assume that $u_0\in B.V.(\R)$. If $\lambda=\Dt/\Dx$ satisfies the
  CFL-condition $\lambda<\sup_x f'(|u_0(x)|)$, and $u_\Dx$ is defined
  by \eqref{eq:rfully_1}, \eqref{eq:ufully}, then $u=\lim_{\Dx\to
    0}u_\Dx$ is the unique entropy (in the sense of
  Definition~\ref{def:entropy}) solution to \eqref{eq:system}.
\end{theorem}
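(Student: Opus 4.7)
The plan is to assemble the pieces already gathered in Section \ref{sec:fully} and close the argument via Lemma \ref{lem:wagnar}. Lemma \ref{lem:ensol} gives strong $L^1_{\mathrm{loc}}$ convergence $r_\Dx \to r$, where $r$ is the unique Kru\v{z}kov entropy solution of \eqref{eq:scalar}. The componentwise monotonicity argument preceding the theorem yields the pointwise bound \eqref{eq:ubnd2}, namely $\abs{u_\Dx(x,t)} \le r_\Dx(x,t) \le \abs{u_0(x)}$, so the family $\seq{u_\Dx}$ is uniformly bounded in $L^\infty(\Omega)$ and admits, along a subsequence, a weak-$*$ limit $u \in L^\infty(\Omega)$.

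Next I would pass to the limit in the weak form of \eqref{eq:ufully}. The consistency estimate already displayed just above the theorem controls the difference between the discrete weak form and the continuum weak form by $O(\Dx)\bigl(\norm{\psi_{tt}}_{L^\infty}+\norm{\psi_{xx}}_{L^\infty}+\norm{\psi_t}_{L^\infty}\bigr)$. Combined with the strong convergence $\phi(r_\Dx)\to \phi(r)$ (from continuity of $\phi$ and the strong convergence of $r_\Dx$) and the weak-$*$ convergence of $u_\Dx$, this yields
\begin{equation*}
\iint_\Omega u\psi_t + u\phi(r)\psi_x \,dxdt + \int_\R u_0(x)\psi(x,0)\,dx = 0
\end{equation*}
for all $\psi\in C^\infty_0(\Omega)$. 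Thus the pair $(r,u)$ is a distributional solution of the triangular system \eqref{eq:req}--\eqref{eq:veq1}, with the natural identification $u = r v$ where $v := u/r$ on the set $\seq{r>0}$.

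The crux is to upgrade the inequality $\abs{u}\le r$ (which follows from the weak-$*$ lower semicontinuity considerations alluded to in the preceding subsection) to the equality $\abs{u}=r$. Since $r(x,0)=\abs{u_0(x)}$, the initial datum $v(x,0)=u_0(x)/\abs{u_0(x)}$ has unit Euclidean norm a.e. Applying Lemma \ref{lem:wagnar} with $m=n$, the Wagner transformation $T$ converts \eqref{eq:req}--\eqref{eq:veq1} into \eqref{eq:newsys}; the equation $\tilde v_t = 0$ is explicit, so $\tilde v(y,t)=\tilde v(y,0)$. Because $T$ acts only on the base variables and $\tilde v = v\circ T^{-1}$, the unit-modulus property is preserved, and pulling back via \eqref{eq:yeq}--\eqref{eq:transformation} yields $\abs{v(x,t)}=1$ a.e., i.e., $\abs{u}=r$.

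With $\abs{u}=r$ in hand, the limit $u$ satisfies $u_t+(u\phi(\abs{u}))_x=0$ in $\mathcal{D}'(\Omega)$ while $\abs{u}=r$ is the Kru\v{z}kov entropy solution of \eqref{eq:scalar}, so by Definition \ref{def:entropy} the function $u$ is an entropy solution of \eqref{eq:system}. The uniqueness part of Theorem \ref{thm:FreistuhlerPanov} identifies $u$ as the unique entropy solution, and the standard subsequence argument (as in the remark after Theorem \ref{thm:final}) upgrades the convergence to the entire sequence $\seq{u_\Dx}_{\Dx>0}$. The principal obstacle is the rigorous invocation of Lemma \ref{lem:wagnar}: one must handle the set where $r=0$ (on which $v$ is undefined, to be dealt with as in the semi-discrete analysis), and the integral condition $\int_{-\infty}^{0} r\,dx=\int_0^{\infty} r\,dx=\infty$ of the lemma must be met or bypassed in the present $L^1$ setting, either through a localized version of Wagner's correspondence or by a compatible extension of the initial data.
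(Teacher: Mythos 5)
Your proposal follows essentially the same route as the paper: Lemma~\ref{lem:ensol} for the strong convergence of $r_\Dx$, the componentwise monotonicity/linearity argument for the bound $\abs{u_\Dx}\le r_\Dx$ and the weak-$*$ limit, the consistency estimate to pass to the limit in the weak form, and finally Lemma~\ref{lem:wagnar} to upgrade $\abs{u}\le r$ to $\abs{u}=r$. The caveats you raise at the end (the set $\seq{r=0}$ and the infinite-mass hypothesis of Wagner's correspondence versus the $L^1$ setting) are genuine delicate points that the paper itself passes over in a single sentence, so flagging them is appropriate rather than a defect of your argument.
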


\section{Numerical experiments}
\label{sec:numerical}
We close this paper by demonstrating how these schemes work in
practice. We perform all the computations for $2 \times 2$ system with
$\phi(r)=r^2$.
\subsection{Numerical Experiment 1}
\label{sec:numer1}
In this case we approximate the system \eqref{eq:system} with initial
data
\begin{equation}\label{eq:numrieinit}
  U_0(x) =
  \begin{cases}
    U_l, & \quad x<0, \\
    U_r, & \quad x>0.
  \end{cases}
\end{equation}
It is not difficult to find the exact solution of \eqref{eq:system} in
this case. For the sake of completeness we write the explicit form of
the exact solutions $U(x,t) = \bar{U}(x/t)$.

If $\abs{U_l} < \abs{U_r}$, then
\begin{equation}\label{eq:numriesol1}
  \bar{U}(\xi) =
  \begin{cases}
    U_l, & \quad \xi \le \abs{U_l}^2, \\
    U_m, & \quad \abs{U_l}^2 \le \xi \le 3 \abs{U_l}^2 , \\
    (\frac{\xi}{3})^{1/2} \frac{U_r}{\abs{U_r}}, & \quad 3 \abs{U_l}^2 \le \xi \le 3 \abs{U_r}^2 \\
    U_r, & \quad \xi \ge 3 \abs{U_r}^2,
  \end{cases}
\end{equation}

If $\abs{U_l} > \abs{U_r}$, then
\begin{equation*}
  \bar{U}(\xi) =
  \begin{cases}
    U_l, & \quad \xi \le \abs{U_l}^2, \\
    U_m, & \quad \abs{U_l}^2 \le \xi \le  \abs{U_l}^2 + \abs{U_l} \abs{U_r} + \abs{U_r}^2 , \\
    U_r, & \quad \xi \ge \abs{U_l}^2 + \abs{U_l} \abs{U_r} +
    \abs{U_r}^2,
  \end{cases}
\end{equation*}
with $U_m = \frac{\abs{U_l}}{\abs{U_r}} U_r$ in both cases.

In what follows, we test the fully discrete explicit numerical scheme
\eqref{eq:fullydiscrete} with initial data
\begin{equation*}
  U_0(x) =
  \begin{cases}
    U_{-}, & \quad x<0, \\
    U_{+}, & \quad x>0,
  \end{cases}
\end{equation*}
where
\begin{align*}
  U_{-} = (0.5,1.5), \qquad U_{+} =(1.5,2.0),
\end{align*}
for the first experiment and
\begin{align*}
  U_{-} = (1.5,2.0), \qquad U_{+} =(0.5,1.5),
\end{align*}
for the second experiment.  The computations are performed on a
computational domain $[-1,20]$ with $4000$ mesh points. To enforce the
CFL condition we set the time step $\Dt = (CFL) \Dx/3
\sup{\abs{U_0}^2}$, where we use a CFL number $0.75$. Although we do
not plot the exact solutions, a comparison of the computational
results displayed in Figs ~\ref{fig:1} with the exact solution shows
good agreement.

\begin{figure}[htbp]
  \centering \subfigure[$T
  =0$]{\includegraphics[height=0.4\linewidth,width=0.49\linewidth]{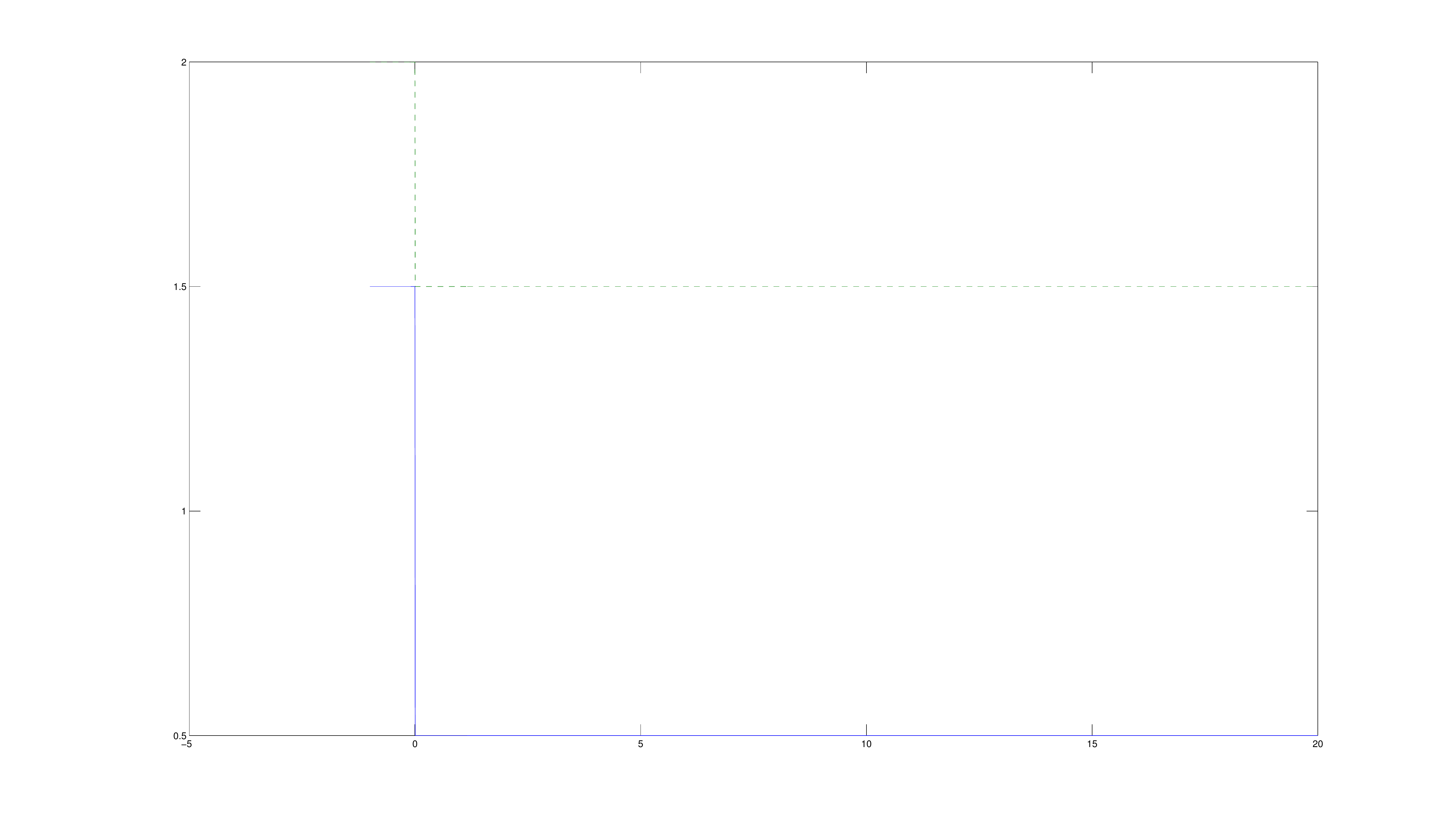}}
  \subfigure[$T = 0$]{\includegraphics[height=0.4\linewidth,width=0.49\linewidth]{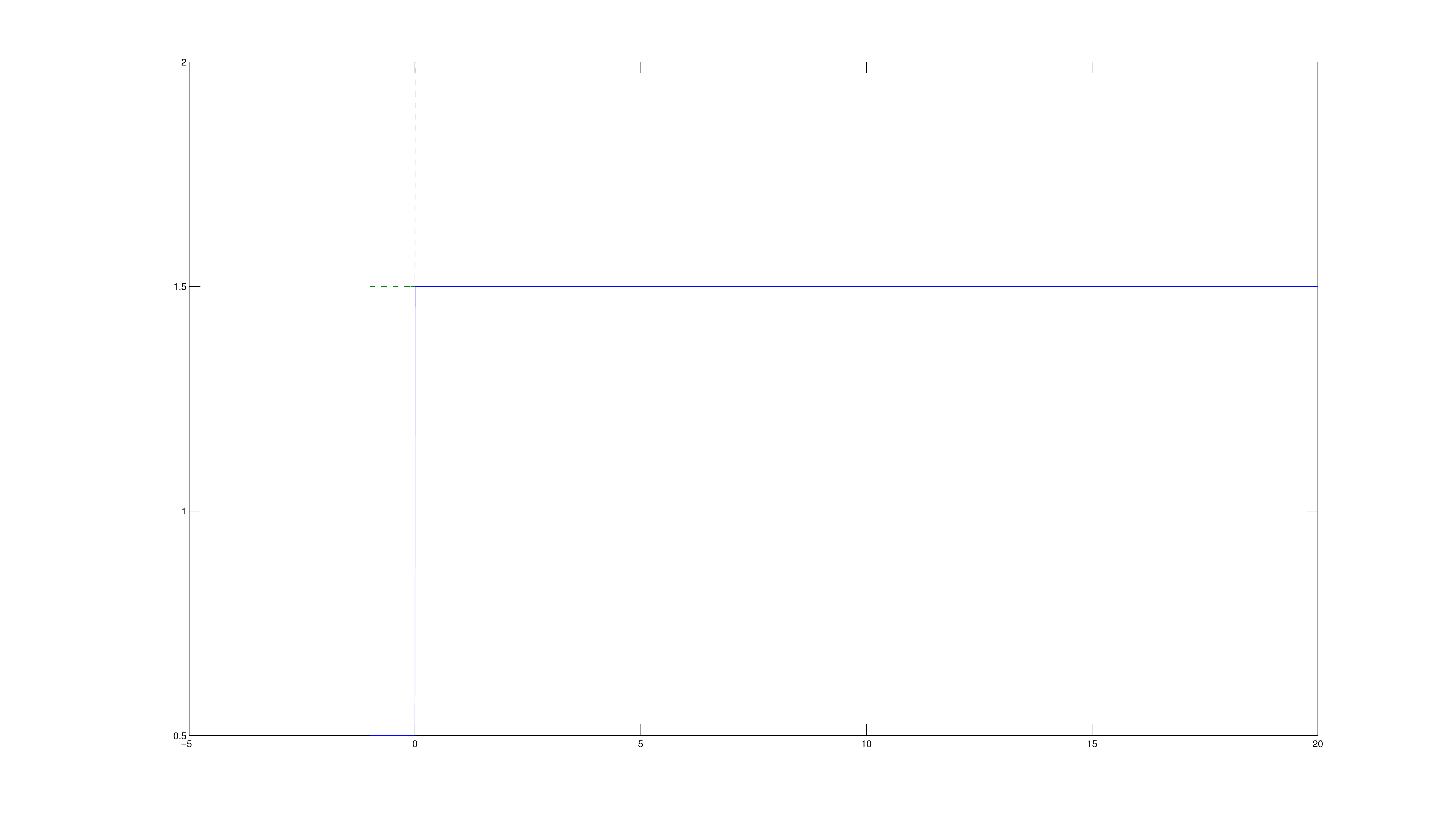}}\\
  % \subfigure[$T
  % =0.25$]{\includegraphics[height=0.4\linewidth,width=0.49\linewidth]{}}
  % \subfigure[$T =0.25$]{\includegraphics[height=0.4\linewidth,width=0.49\linewidth]{}} \\
  \subfigure[$T
  =0.50$]{\includegraphics[height=0.4\linewidth,width=0.49\linewidth]{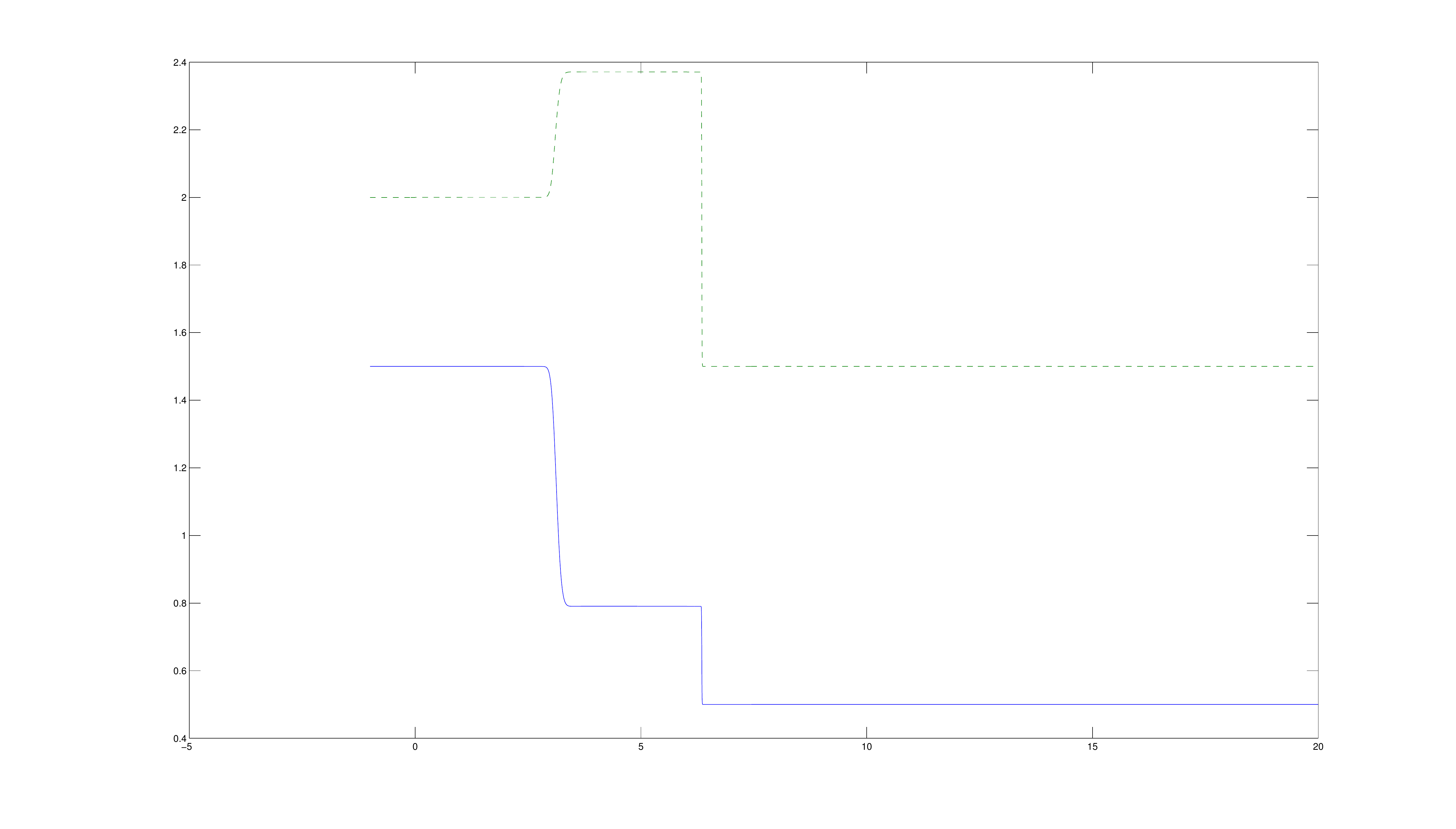}}
  \subfigure[$T
  =0.50$]{\includegraphics[height=0.4\linewidth,width=0.49\linewidth]{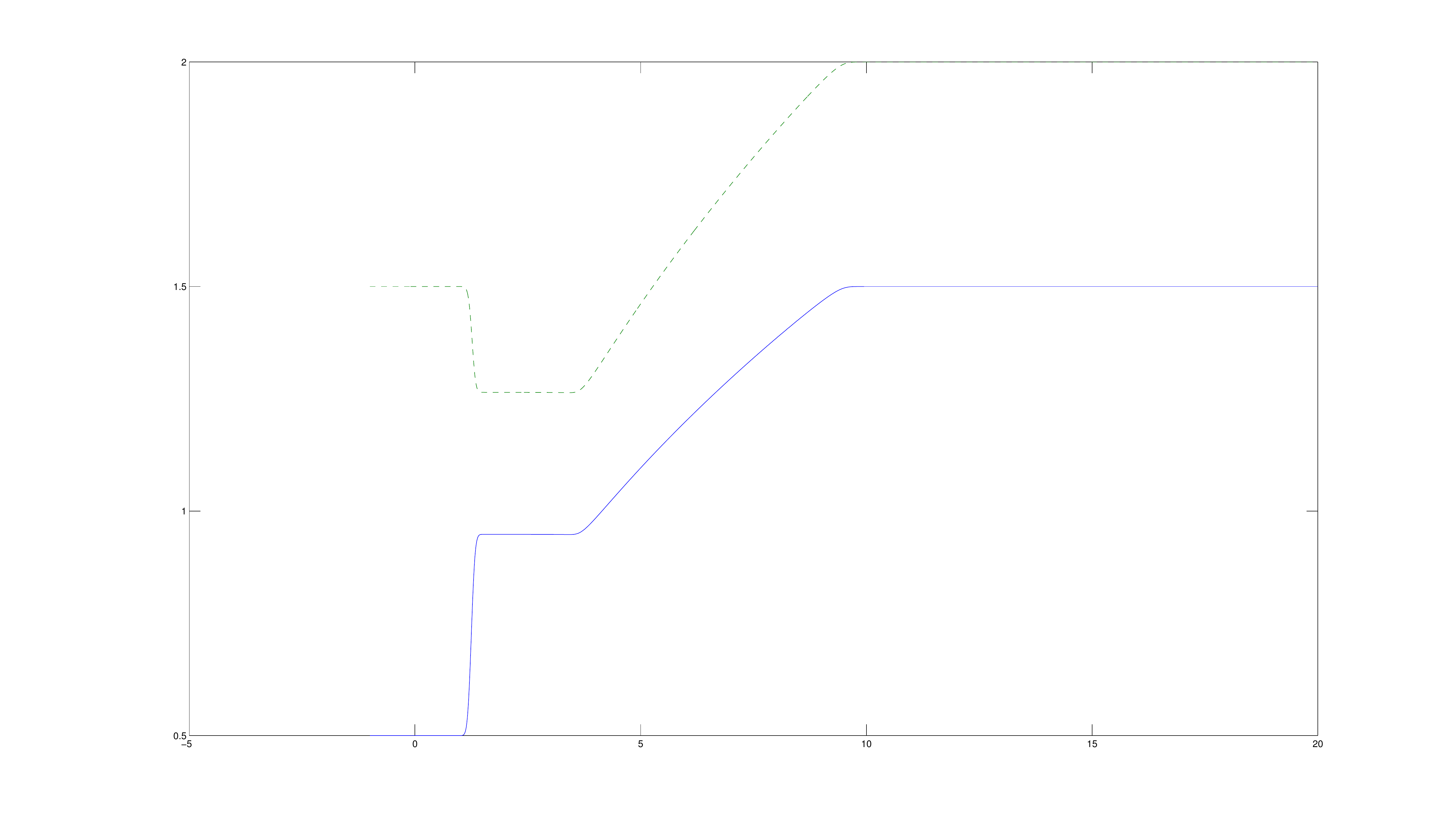}}
  \caption{ Left column: Experiment-$1$: The dotted-dashed curve
    represents the first component of $U$, the solid curve represents
    the second component. Right column: Experiment-$2$: The
    dotted-dashed curve represents the first component of $U$, the
    solid curve represents the second component.}
  \protect \label{fig:1}
\end{figure}

\subsection{Numerical experiment 2}
\label{numer:2}
In this case, we test our fully discrete explicit numerical scheme
\eqref{eq:rfully}--\eqref{eq:vfully} with initial data $U_0 = r_0
w_0$, where
\begin{equation*}
  r_0(x) =
  \begin{cases}
    r_{-}, & \quad x<0, \\
    r_{+}, & \quad x>0,
  \end{cases}
\end{equation*}
with
\begin{align*}
  r_{-} = 1.0, \qquad r_{+} =0.75,
\end{align*}
for the first and third numerical experiments and
\begin{align*}
  r_{-} = 0.75, \qquad r_{+} =1.0,
\end{align*}
for the second and fourth numerical experiments.  Similarly, for $w_0$
we take
\begin{equation*}
  w_0(x) =
  \begin{cases}
    (1.0, 0.0), & \quad x<0.2 \\
    (\cos(8 \pi (x-0.2)), \sin(8 \pi (x-0.2))), & \quad 0.2 \le x \le 0.7, \\
    (1.0, 0.0), & \quad x \ge 0.7,
  \end{cases}
\end{equation*}
for the first and second numerical experiments and
\begin{equation*}
  w_0(x) =
  \begin{cases}
    (1.0, 0.0),  & \quad x \le 0.2, \\
    (-1.0, 0.0), & \quad x \ge 0.2,
  \end{cases}
\end{equation*}
In this case also, it is easy to find the exact solution. Although we
do not plot the exact solutions, we give the explicit form of the
exact solution. The exact solution is given by $U = r w$ with
\begin{equation*}
  r(x,t) =
  \begin{cases}
    r_{-}, & \quad x \le s\,t, \\
    r_{+}, & \quad x \ge s\,t,
  \end{cases}
  \quad \text{with}\,\, s = r_{-}^2 + r_{-} r_{+} + r_{+}^2,
\end{equation*}
and
\begin{equation*}
  w(x,t) =
  \begin{cases}
    w_0 (x - r_{-}^2 t), & \quad x \le r_{-}^2 t, \\
    w_0( \frac{r_{-}}{r_{+}} (x - r_{-}^2 t)), & \quad r_{-}^2 t \le x \le  s\,t , \\
    w_0( x - r_{+}^2 t), & \quad x \ge s\,t,
  \end{cases}
\end{equation*}
for the first and third numerical experiments. Similarly,
\begin{equation*}
  r(x,t) =
  \begin{cases}
    r_{-}, & \quad x \le 3 r_{-}^2 t, \\
    (x/3t)^{1/2}, & \quad 3 r_{-}^2 t \le x \le 3 r_{+}^2 t \\
    r_{+}, & \quad x \ge 3 r_{+}^2 t,
  \end{cases}
\end{equation*}
and
\begin{equation*}
  w(x,t) =
  \begin{cases}
    w_0 (x - r_{-}^2 t), & \quad x \le r_{-}^2 t, \\
    w_0( \frac{r_{-}}{r_{+}} (x - r_{-}^2 t)), & \quad r_{-}^2 t \le x \le 3 r_{-}^2 t , \\
    w_0( \frac{2}{3 \sqrt{3} r_{+}} x^{3/2} t^{-1/2}), & \quad 3 r_{-}^2 t \le x \le 3 r_{+}^2 t \\
    w_0( x - r_{+}^2 t), & \quad x \ge 3 r_{+}^2 t,
  \end{cases}
\end{equation*}
for the second and fourth numerical experiments.

In all the experiments computational domain is $[-1,4]$ and we use
Neumann boundary conditions at the left boundary. We also use a $CFL$
number $0.75$ and $4000$ mesh points for all the experiments.  A
comparison of the computational results displayed in Figs
~\ref{fig:2}--\ref{fig:3} with the exact solution shows good
agreement.

Below we show the computational results for four different
qualitatively significant sets of data: a compression or an expansion
wave in $r$ initiated slightly behind a continuous pulse or a
discontinuous contact wave in $w$. Fig ~\ref{fig:2}--\ref{fig:3}
display the computed solution at three different times. In the plots,
the dot-dash curve represents the first component of $U$ and the
dotted curve represents the second component, while the solid curve
represents the $r$-component of $(r,U)$.
\begin{figure}[htbp]
  \centering \subfigure[$T
  =0$]{\includegraphics[height=0.4\linewidth,width=0.49\linewidth]{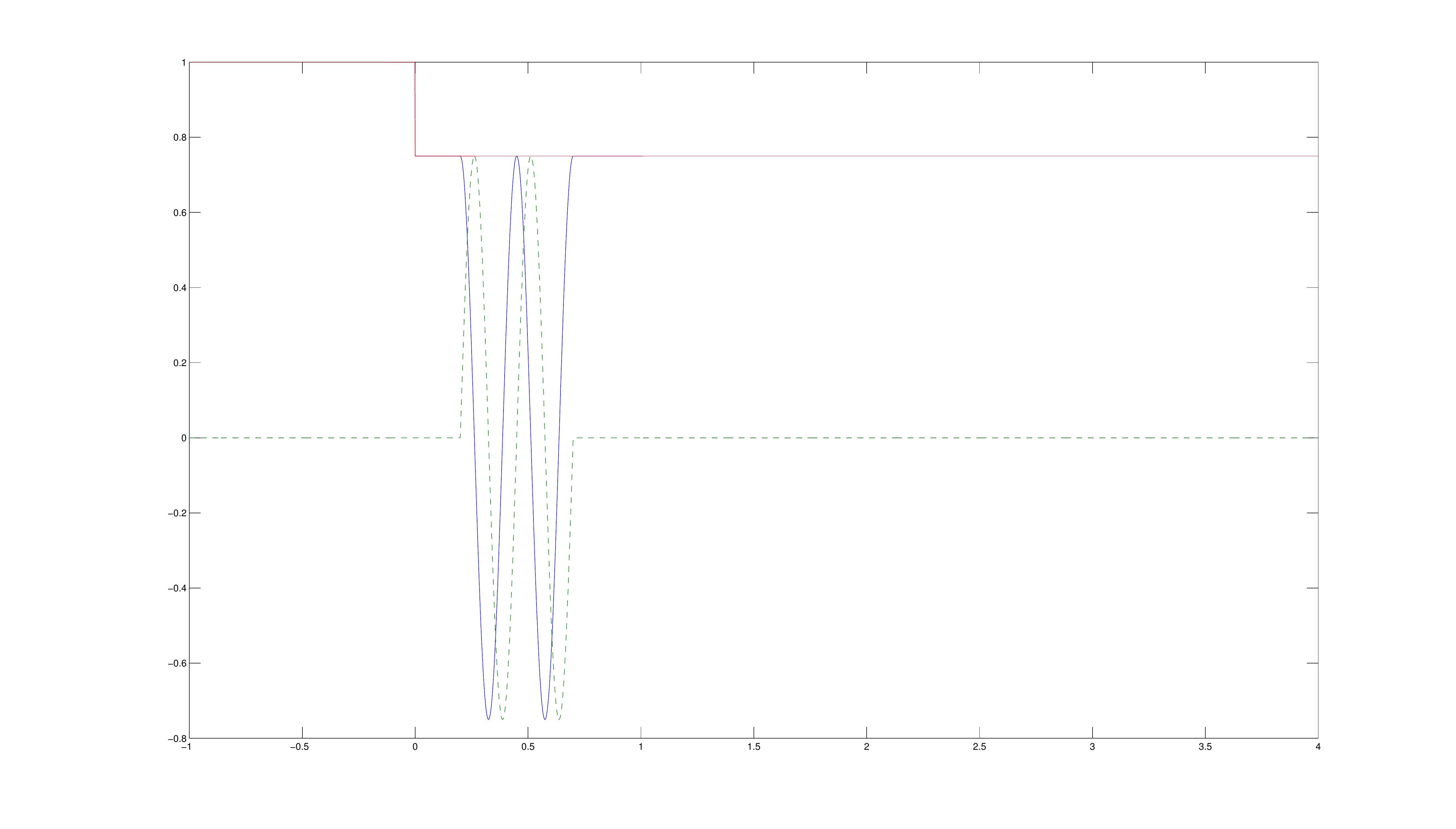}}
  \subfigure[$T = 0$]{\includegraphics[height=0.4\linewidth,width=0.49\linewidth]{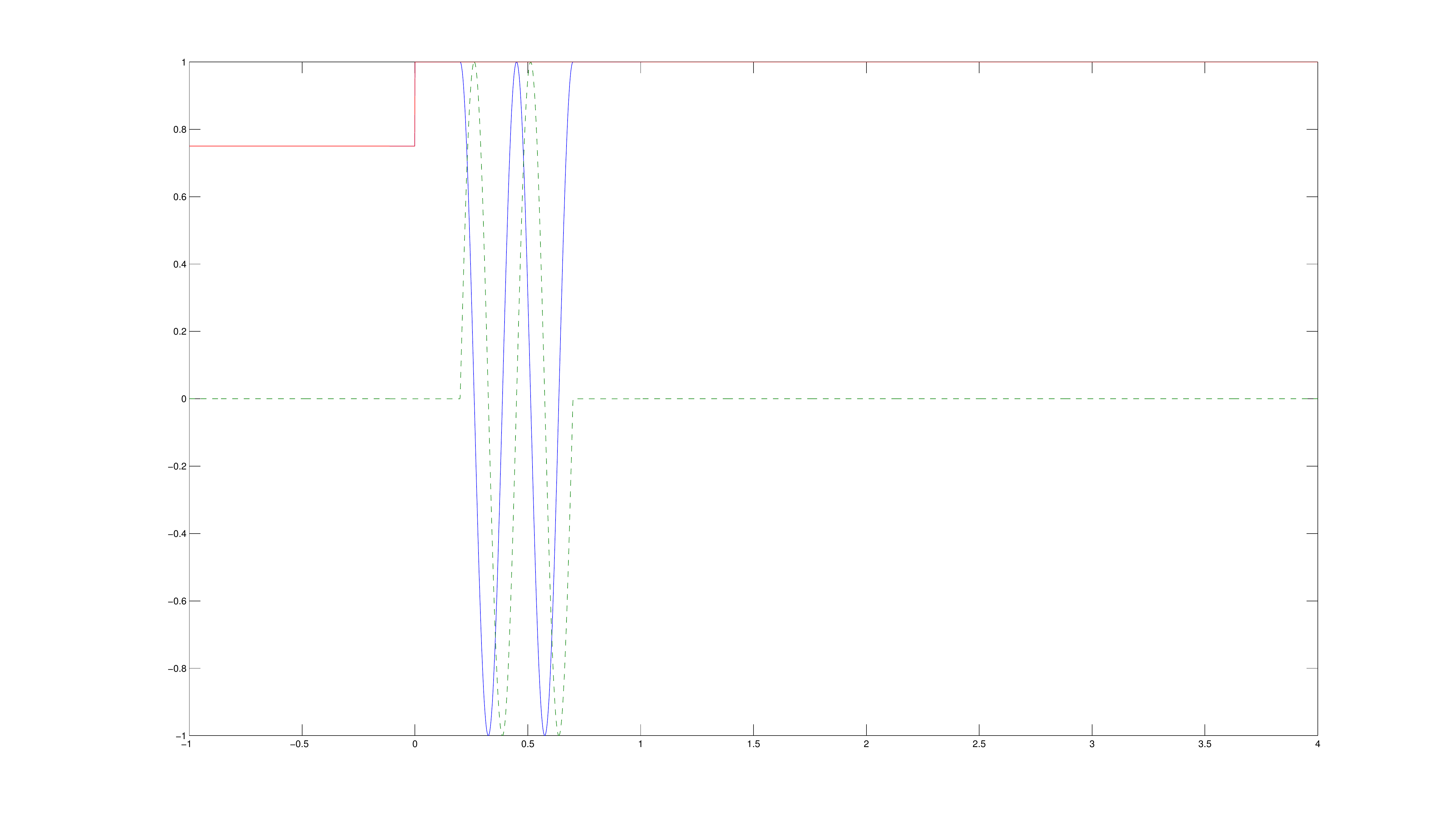}}\\
  \subfigure[$T
  =0.25$]{\includegraphics[height=0.4\linewidth,width=0.49\linewidth]{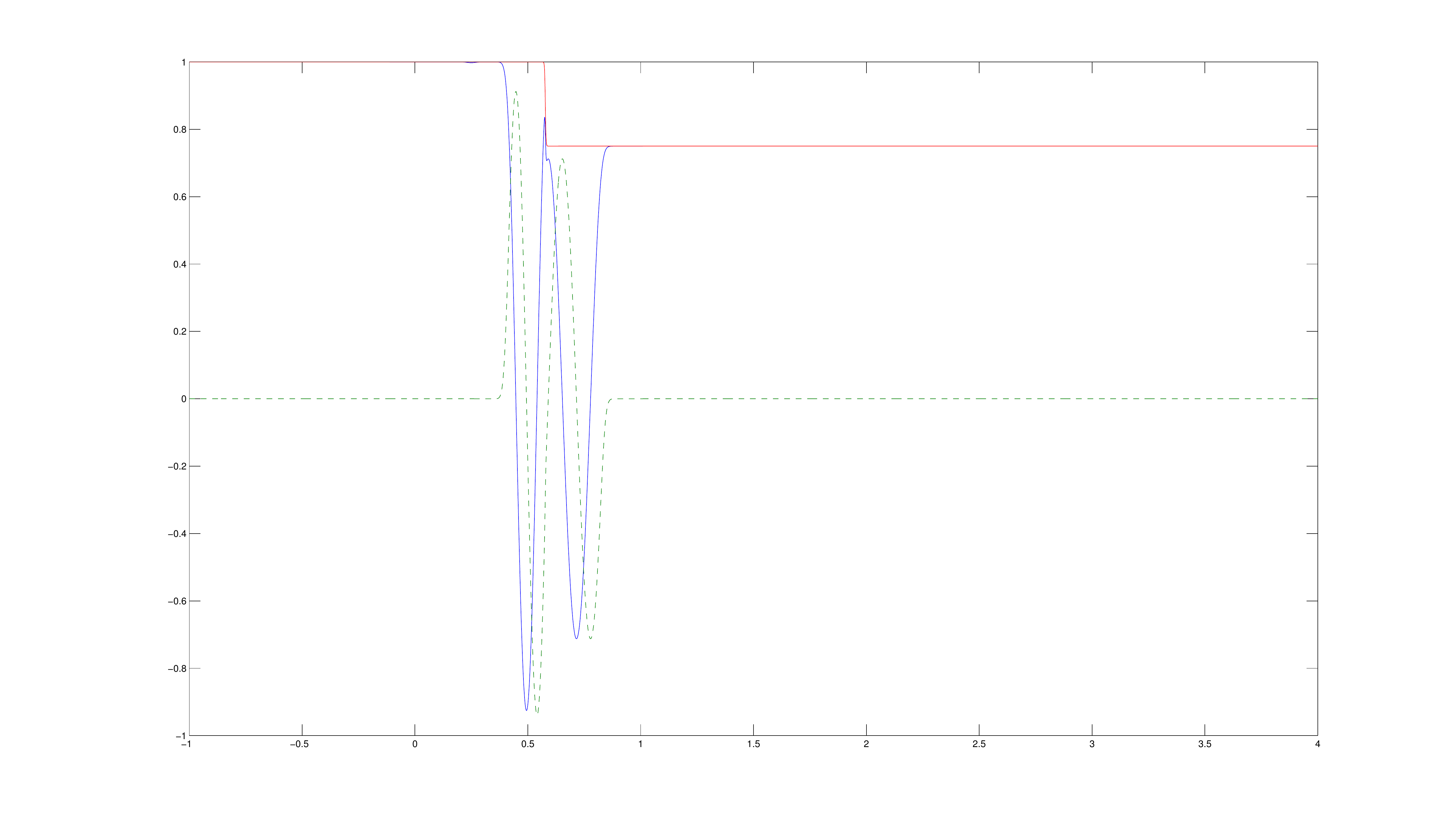}}
  \subfigure[$T =0.25$]{\includegraphics[height=0.4\linewidth,width=0.49\linewidth]{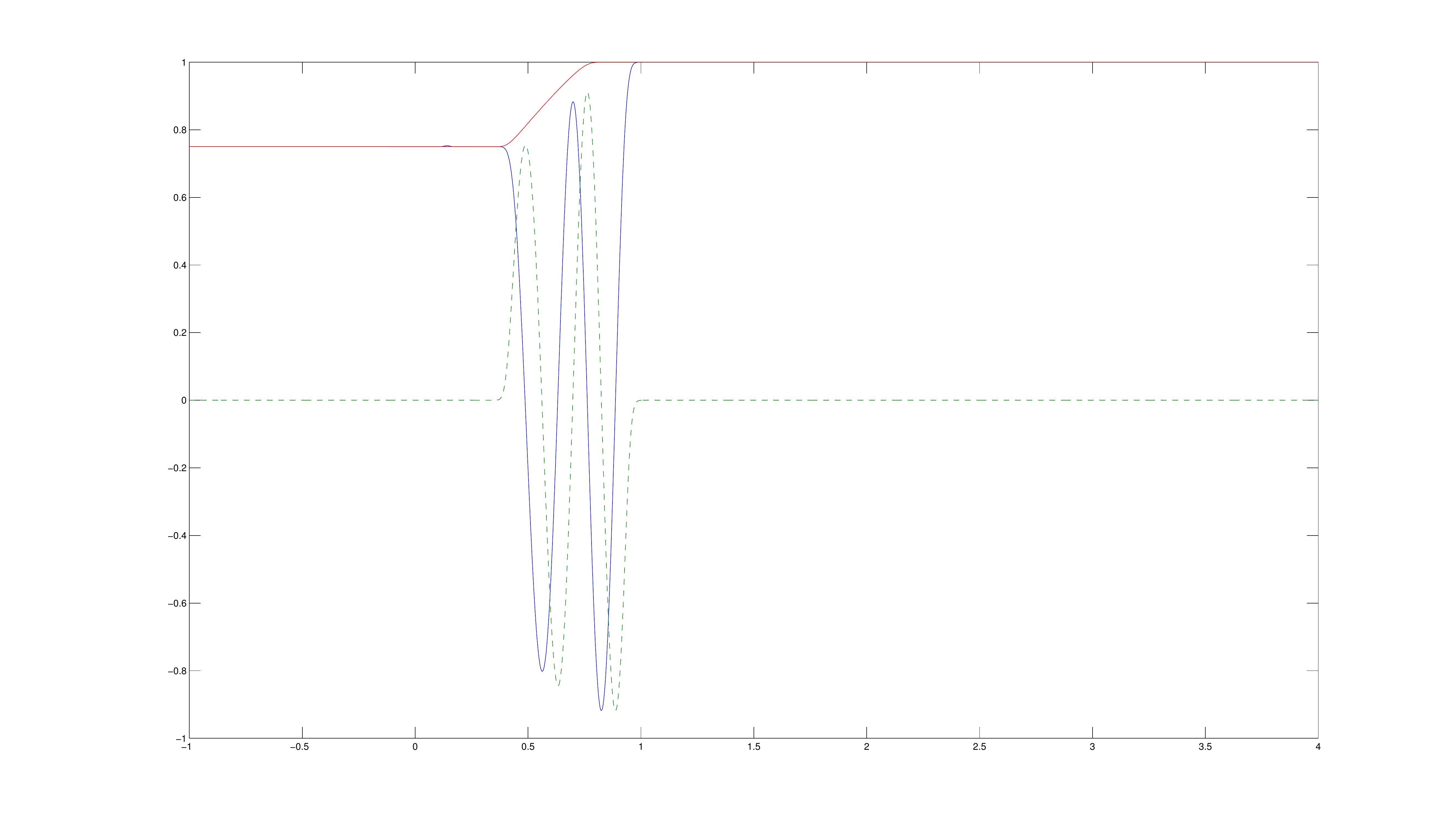}} \\
  \subfigure[$T
  =0.75$]{\includegraphics[height=0.4\linewidth,width=0.49\linewidth]{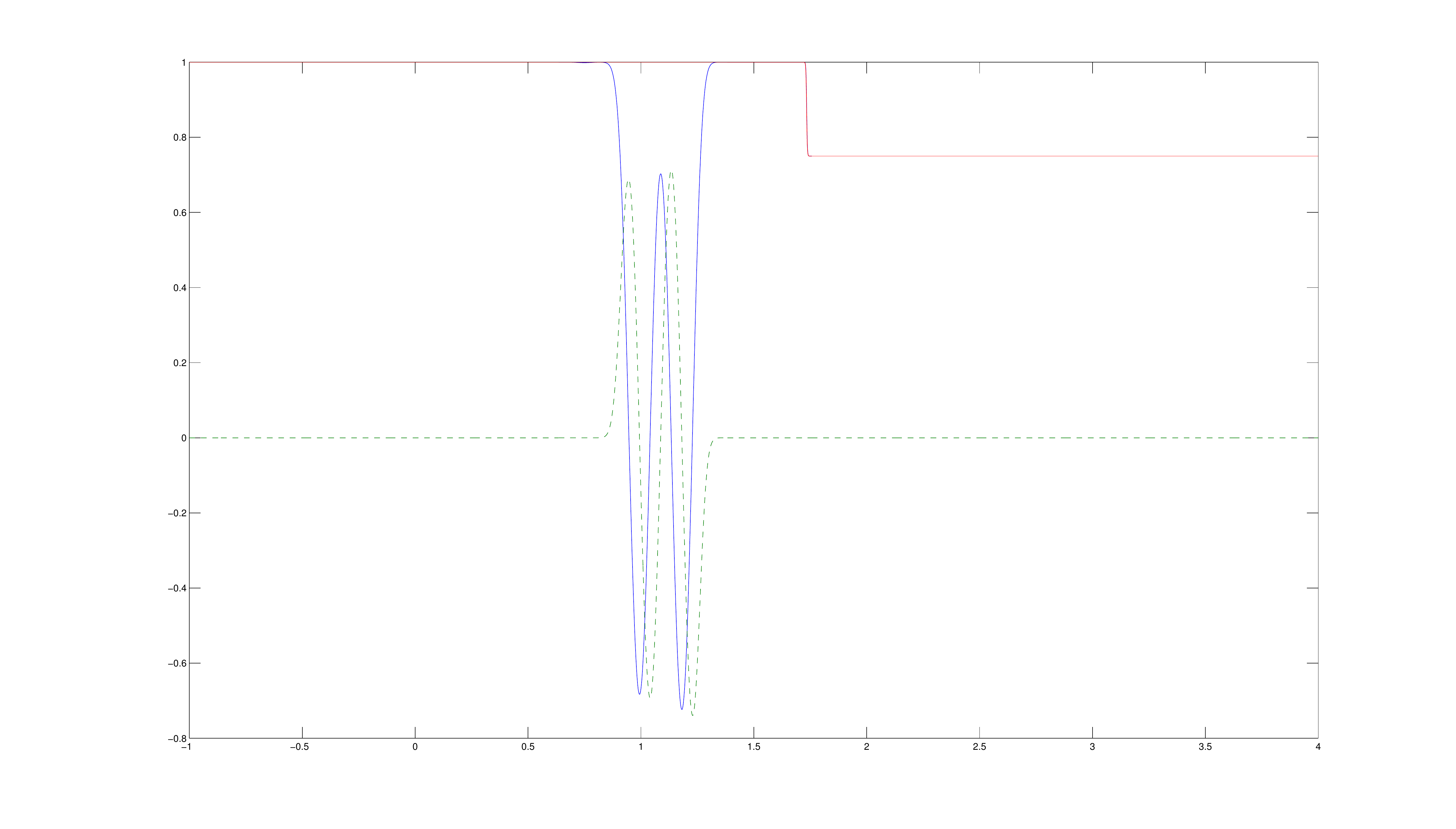}}
  \subfigure[$T
  =0.75$]{\includegraphics[height=0.4\linewidth,width=0.49\linewidth]{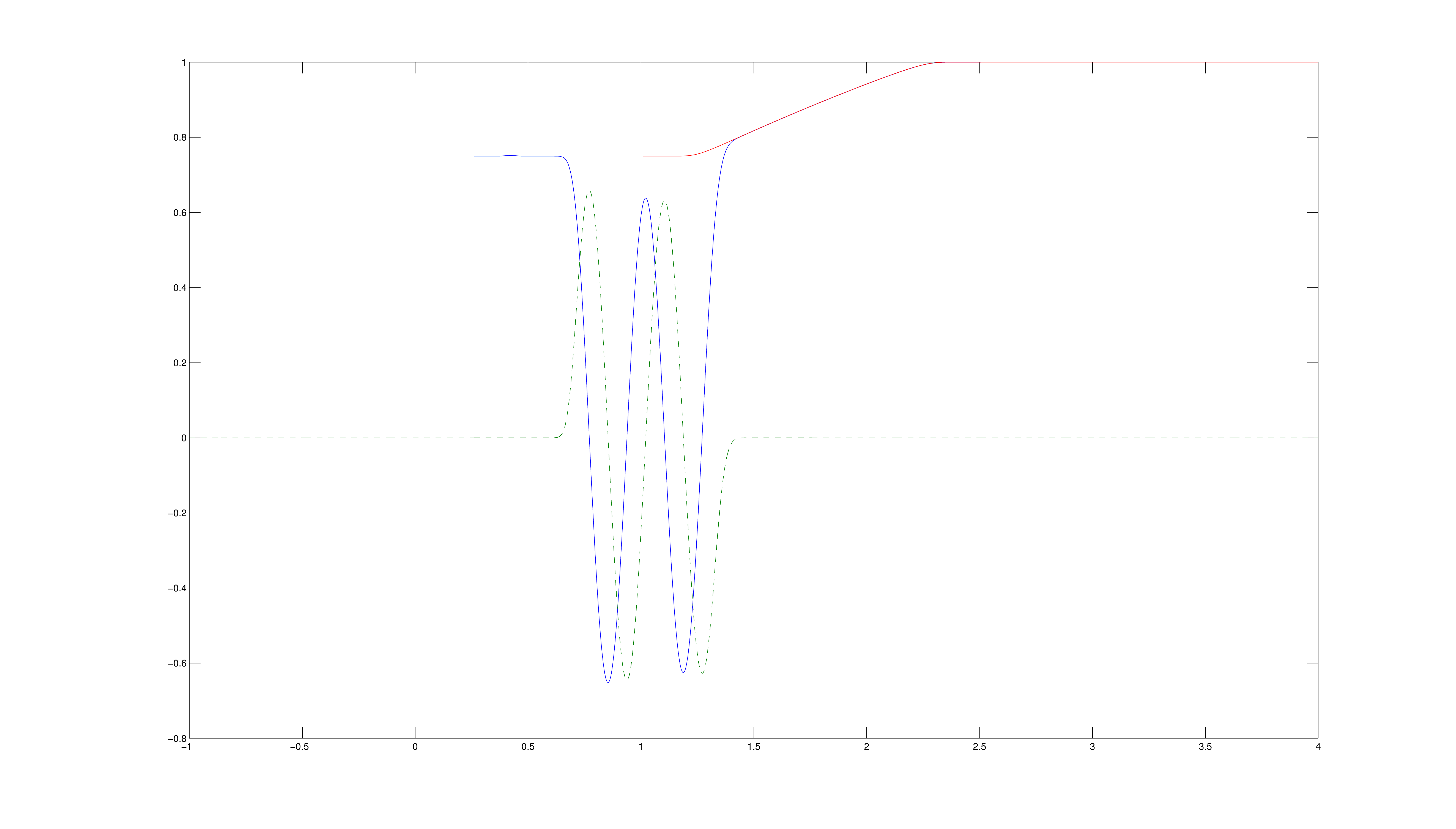}}
  \caption{ Left column: Experiment-$1$: A shock wave initiated behind
    a continuous rotational wave. The dotted-dashed curve represents
    the first component of $U$, the dotted curve represents the second
    component and the solid curve represents $r$. Right column:
    Experiment-$2$: An expansion wave initiated behind a continuous
    rotational wave. The dotted-dashed curve represents the first
    component of $U$, the dotted curve represents the second component
    and the solid curve represents $r$.}
  \protect \label{fig:2}
\end{figure}

\begin{figure}[htbp]
  \centering \subfigure[$T
  =0$]{\includegraphics[height=0.4\linewidth,width=0.49\linewidth]{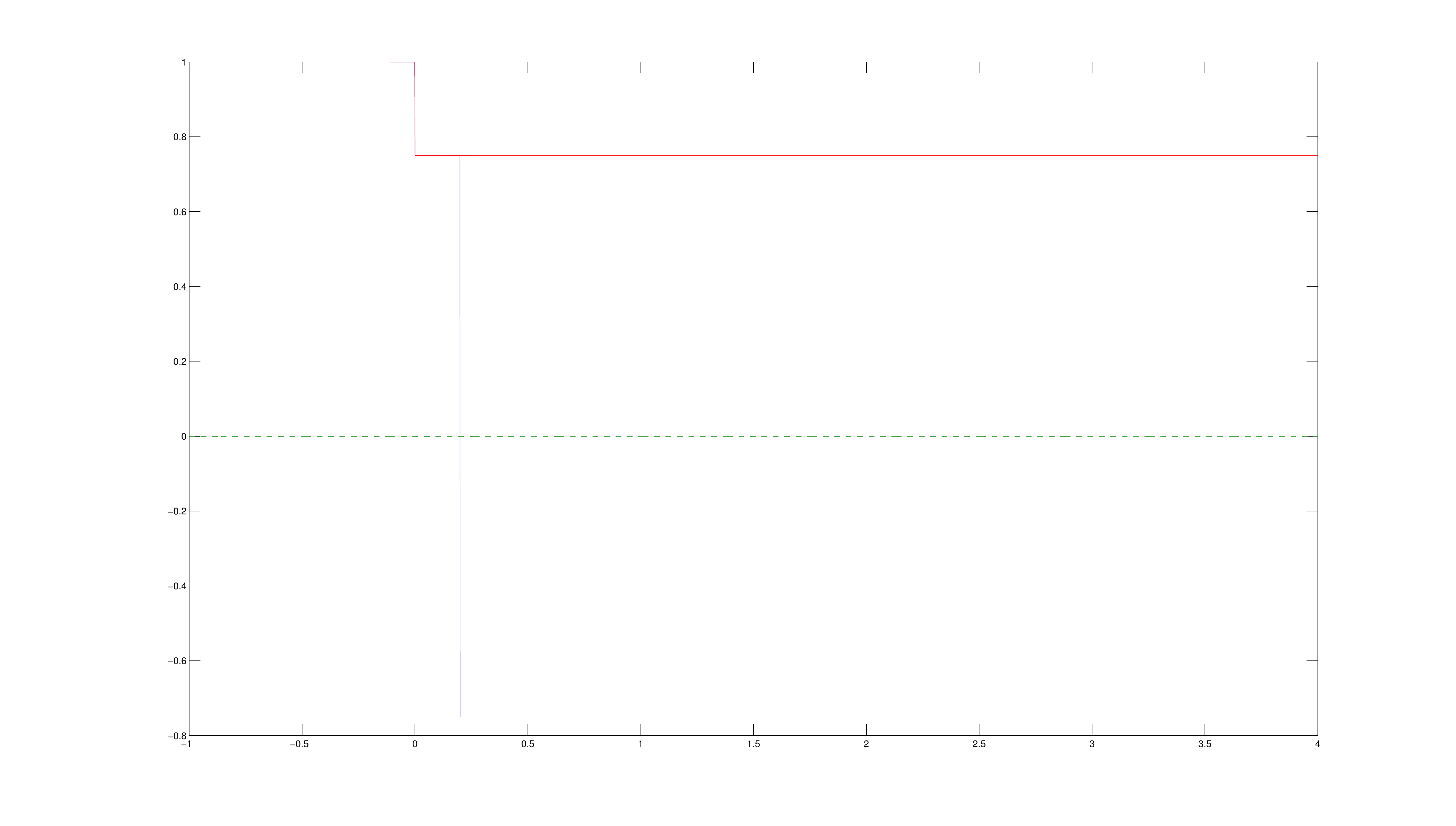}}
  \subfigure[$T =0$]{\includegraphics[height=0.4\linewidth,width=0.49\linewidth]{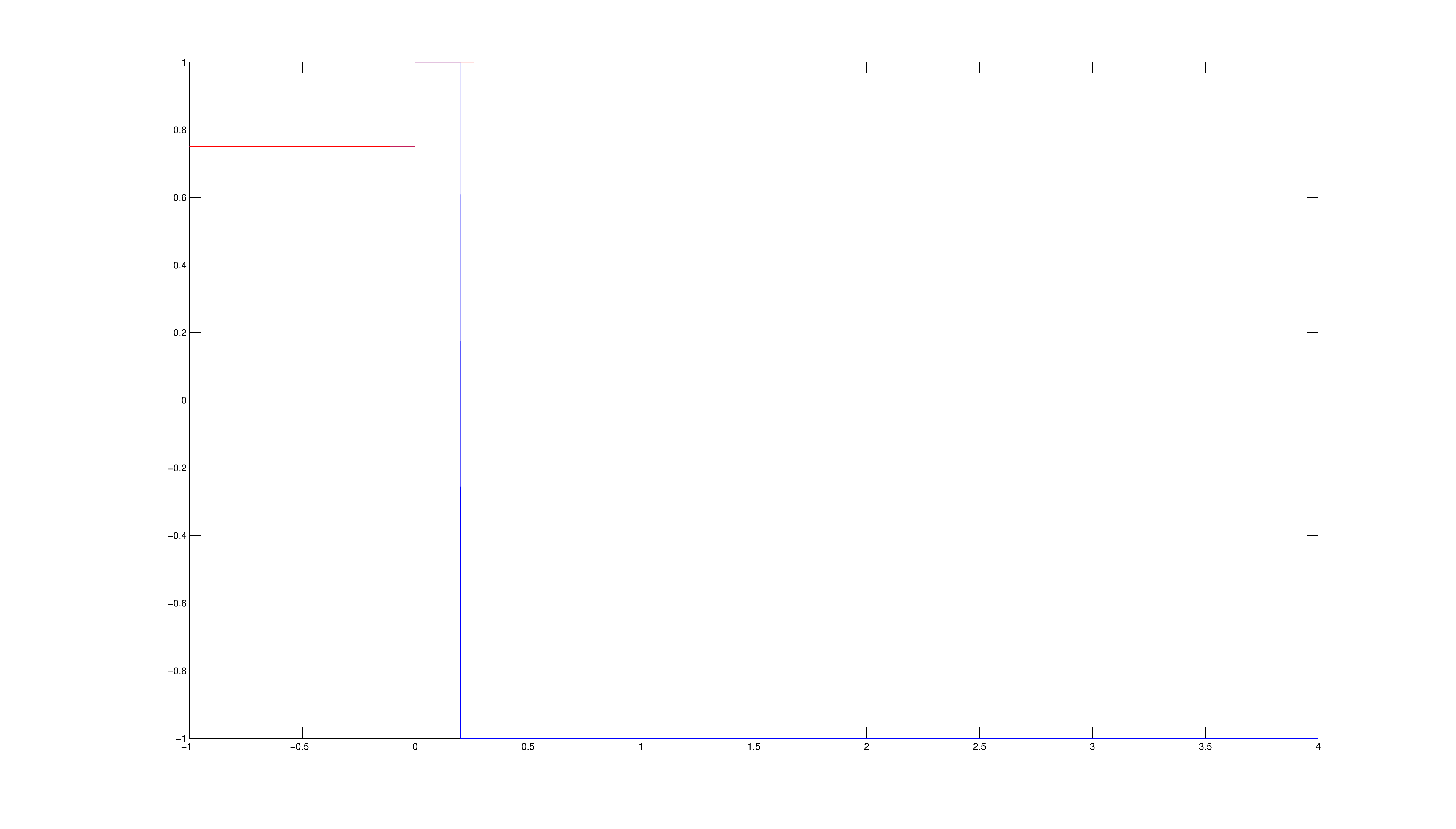}}\\
  \subfigure[$T
  =0.25$]{\includegraphics[height=0.4\linewidth,width=0.49\linewidth]{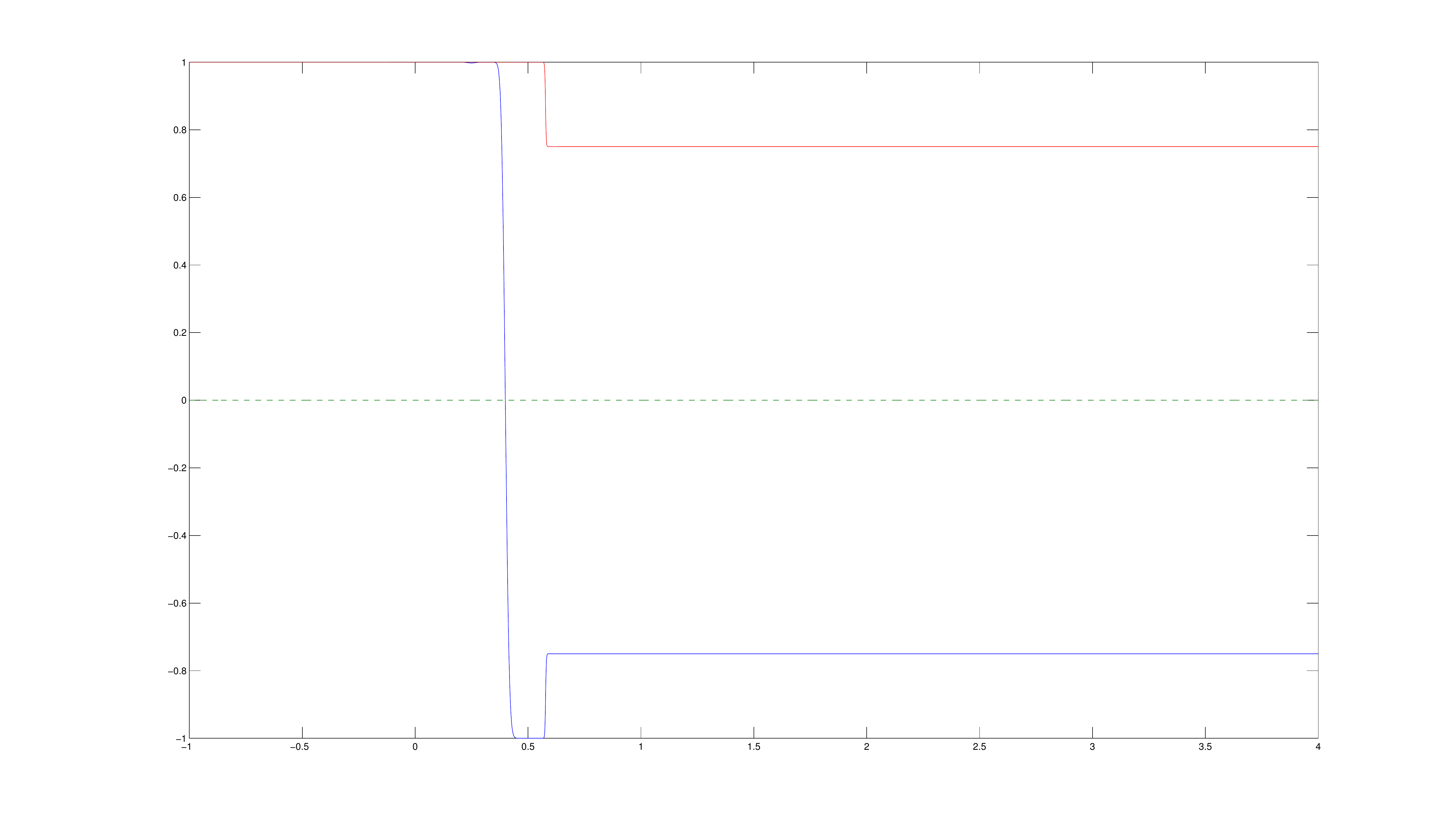}}
  \subfigure[$T =0.25$]{\includegraphics[height=0.4\linewidth,width=0.49\linewidth]{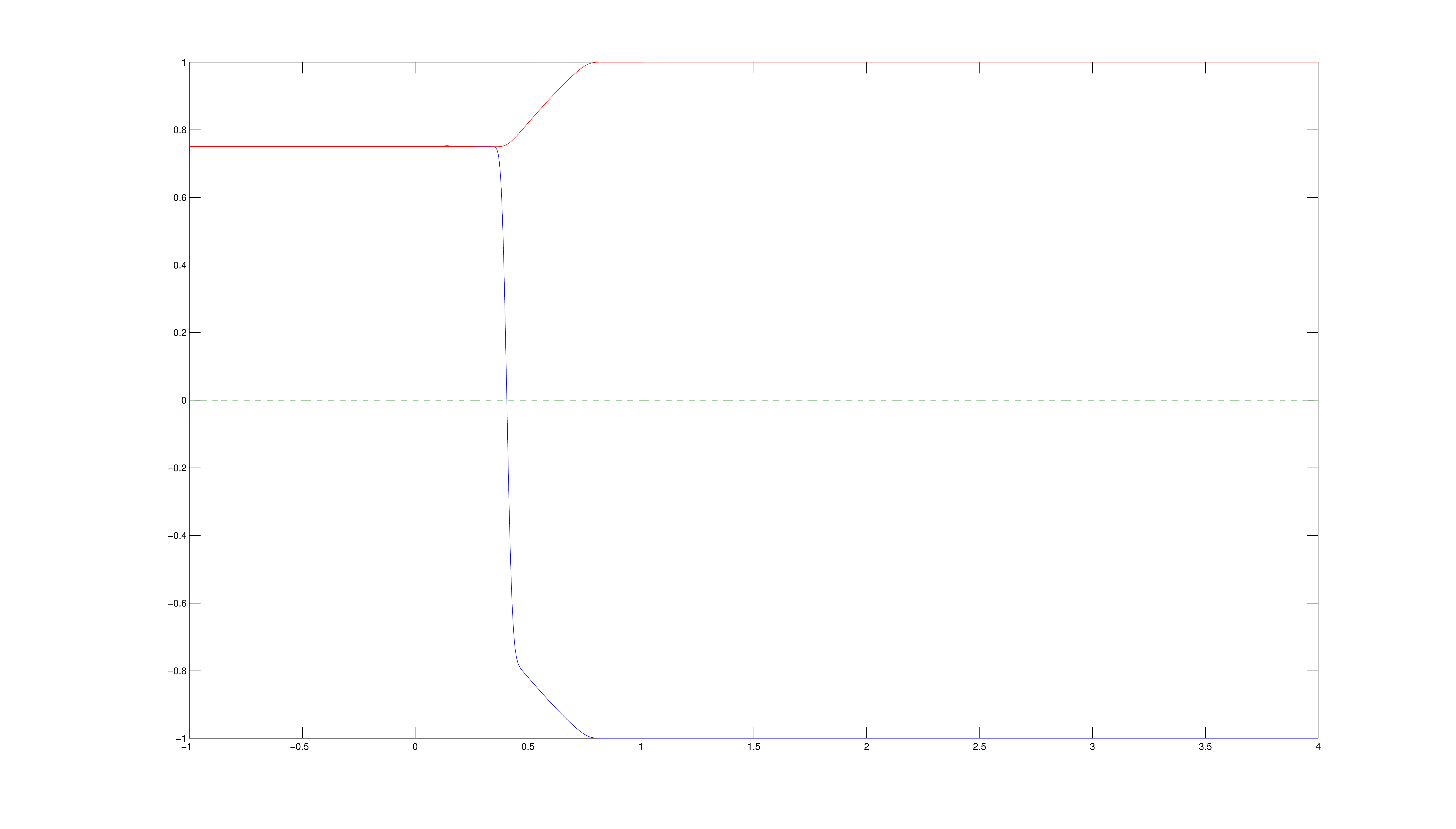}} \\
  \subfigure[$T
  =0.75$]{\includegraphics[height=0.4\linewidth,width=0.49\linewidth]{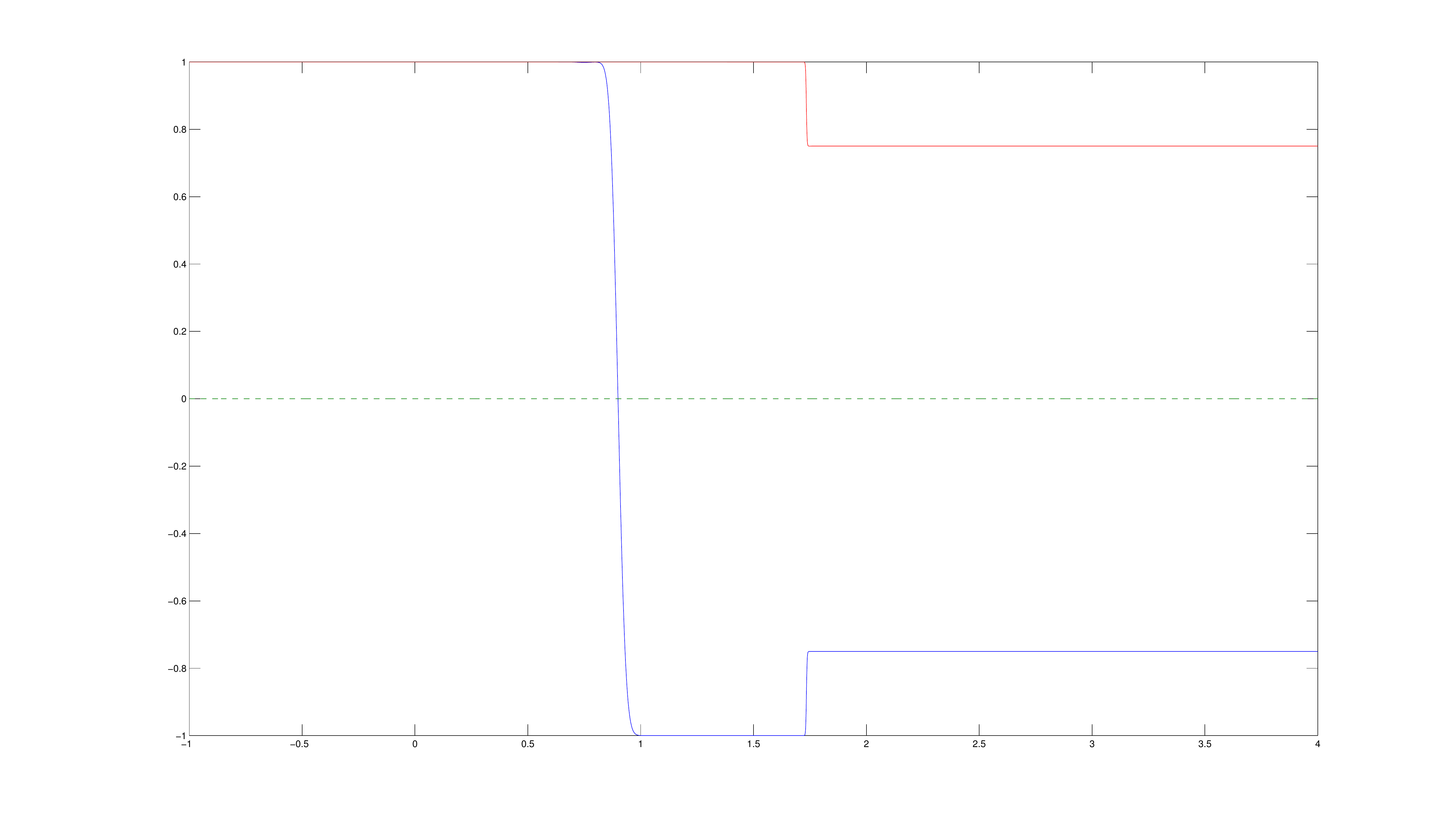}}
  \subfigure[$T
  =0.75$]{\includegraphics[height=0.4\linewidth,width=0.49\linewidth]{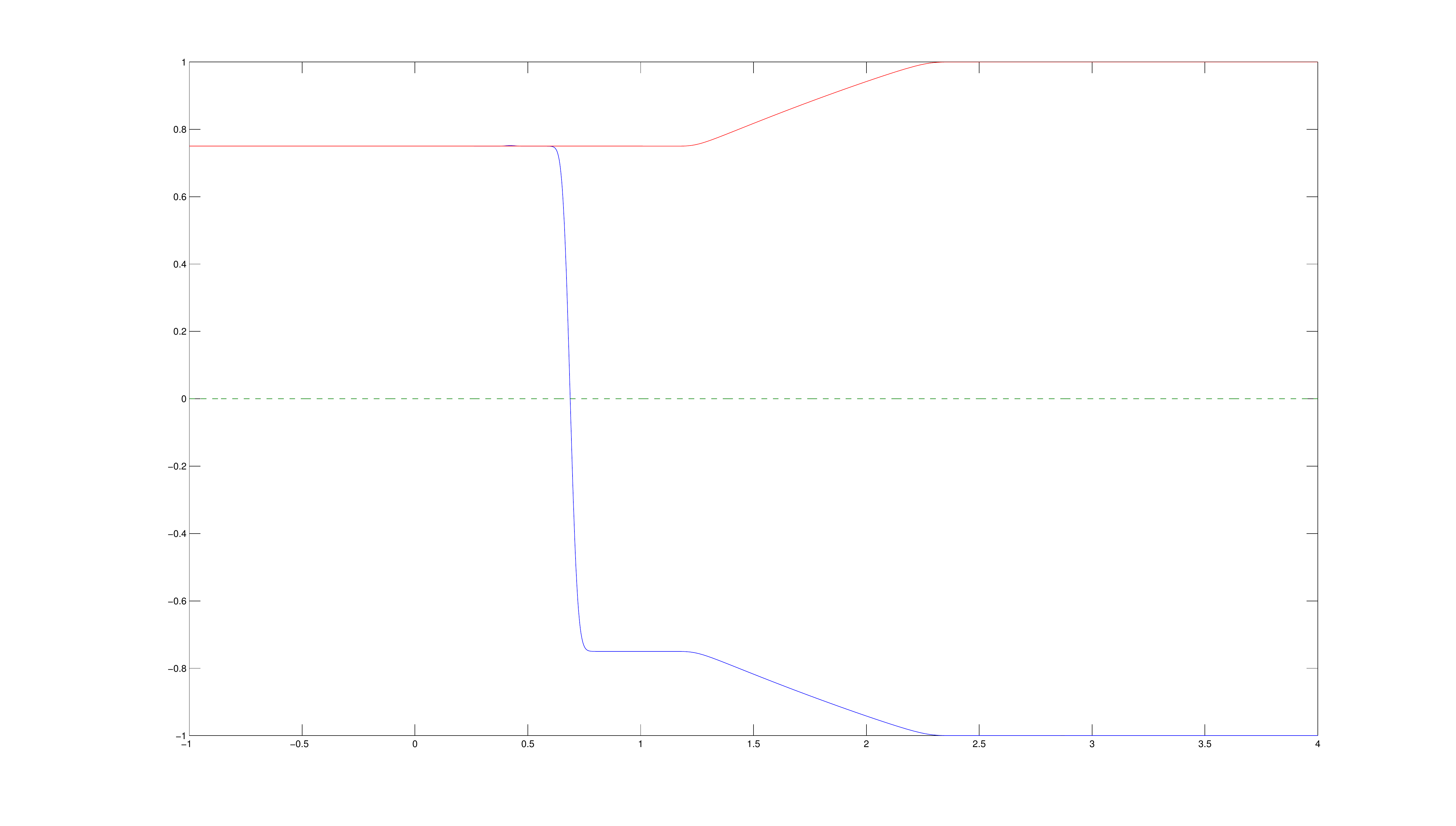}}
  \caption{ Left column: Experiment-$3$: A shock wave initiated behind
    a discontinuous rotational wave. The dotted-dashed curve
    represents the first component of $U$, the dotted curve represents
    the second component and the solid curve represents $r$. Right
    column: Experiment-$4$: An expansion wave initiated behind a
    discontinuous rotational wave. The dotted-dashed curve represents
    the first component of $U$, the dotted curve represents the second
    component and the solid curve represents $r$.}
  \protect \label{fig:3}
\end{figure}

\subsection{Numerical convergence rates}
\label{subsec:numconv}
We have not obtained any theoretical convergence rates for the schemes
presented here. Never the less, it is interesting to check the
possible convergence rate in practice. To this end we have used
Riemann initial data \eqref{eq:numrieinit} with  $U_l=(1,1)$ and
$U_r=(3,1)$. In this case the exact solution is given by formula
\eqref{eq:numriesol1}, i.e., a rarefaction wave followed by a contact
discontinuity.  We define the relative error for a scheme as
\begin{equation}
  \label{eq:errordef}
  E=100\times \frac{\sum_{j} \abs{u^N_j - u(x_j,N\Dt)}}{%
    \sum_j\abs{u(x_j,N\Dt)}},
\end{equation}
where $u$ is the exact solution found by \eqref{eq:numriesol1} and
$u^N_j$ is the approximation computed by the numerical scheme. Note
that this is a first order accurate approximation to the relative
$L^1$ error since $u$ is piecewise continuous. 

We have computed the errors for the three schemes
\eqref{eq:fullydiscrete} (``scheme1''), the conservative scheme
\eqref{eq:rfully_1} -- 
\eqref{eq:ufully} (``scheme2'') and the non-conservative scheme
\eqref{eq:rfully} -- \eqref{eq:vfully}
(``scheme3''). Table~\ref{tab:1} summarizes the results. We computed
the approximations for $t=1$, and used $\Dx=40/2^N$ for
$N=5,\ldots,14$ for $x\in [-1,39]$.
\newcommand{\rb}[1]{\raisebox{1.5ex}{#1}}
\begin{table}[h]
  \centering
  \begin{tabular}[h]{c|r r | r r |r r}
    &\multicolumn{2}{c|}{scheme1}
    &\multicolumn{2}{c|}{scheme2}
    &\multicolumn{2}{c}{scheme3}\\
    $N$ &\multicolumn{1}{c}{$E$} &\multicolumn{1}{c|}{rate}
    &\multicolumn{1}{c}{$E$} &\multicolumn{1}{c|}{rate}
    &\multicolumn{1}{c}{$E$} &\multicolumn{1}{c}{rate} \\
    \hline%\\[-2.5ex]
    5  &3.32 &   &3.36 &  & 3.40   \\[-1ex]
    6  &2.04 & \rb{0.70} &2.08&\rb{0.68}&2.31&\rb{0.55}\\[-1ex]
    7  &1.31 & \rb{0.64} &1.35&\rb{0.63}&1.50&\rb{0.62}\\[-1ex]
    8  &0.81 & \rb{0.69} &0.83&\rb{0.69}&0.89&\rb{0.75}\\[-1ex]
    9  &0.51 & \rb{0.67} &0.52&\rb{0.69}&0.54&\rb{0.72}\\[-1ex]
    10 &0.32 & \rb{0.68} &0.32&\rb{0.68}&0.33&\rb{0.70}\\[-1ex]
    11 &0.20 & \rb{0.64} &0.20&\rb{0.67}&0.21&\rb{0.68}\\[-1ex]
    12 &0.13 & \rb{0.65} &0.13&\rb{0.67}&0.13&\rb{0.68}\\[-1ex]
    13 &0.09 & \rb{0.62} &0.08&\rb{0.65}&0.08&\rb{0.65}\\[-1ex]
    14 &0.06 & \rb{0.60} &0.05&\rb{0.63}&0.05&\rb{0.63}\\
    \multicolumn{7}{c}{$\hphantom{777}$}\\[-1ex]
  \end{tabular}
    \caption{Relative errors and rates.}
  \label{tab:1}
\end{table}
From this table it emerges that the three schemes produce very similar
errors and numerical convergence rates. This convergence rate is
expected to be not higher than $1/2$, since the solution contains a
contact discontinuity, and the schemes are formally first
order. Table~\ref{tab:1} actually seems to predict that the three
schemes have a convergence rate of about 0.6.

% \noindent {\bf Acknowledgments.}
% The first author acknowledges support from the {\bf Alexander von Humboldt Foundation}, through a Humboldt Research Fellowship for postdoctoral researchers.

\end{document}